\newtheorem{theorem}{\textbf{Theorem}}[section]
\newtheorem{lemma}{\textbf{Lemma}}[section]
\newtheorem{proposition}{\textbf{Proposition}}[section]
\newtheorem{corollary}{\textbf{Corollary}}[section]
\newtheorem{remark}{\textbf{Remark}}[section]
\newtheorem{definition}{\textbf{Definition}}[section]
\def\be{\begin{equation}}
\def\ee{\end{equation}}
\def\bea{\begin{eqnarray}}
\def\eea{\end{eqnarray}}
\def\bt{\begin{theorem}}
\def\et{\end{theorem}}
\def\bl{\begin{lemma}}
\def\el{\end{lemma}}
\def\br{\begin{remark}}
\def\er{\end{remark}}
\def\bp{\begin{proposition}}
\def\ep{\end{proposition}}
\def\bc{\begin{corollary}}
\def\ec{\end{corollary}}
\def\bd{\begin{definition}}
\def\ed{\end{definition}}
\def\vp{\varphi}
\begin{document}

\title{On a Navier--Stokes--Cahn--Hilliard System for Viscous Incompressible Two-phase Flows with Chemotaxis, Active Transport and Reaction}

\author{
	Jingning He
	\footnote{School of Mathematics, Hangzhou Normal University, Yuhang District, Hangzhou 311121, Zhejiang Province, P.R. China. Email: \texttt{jingninghe2020@gmail.com}
	},
\ \ Hao Wu\footnote{Corresponding author. School of Mathematical Sciences, Fudan University, Handan Road 220, Shanghai 200433, P.R. China.
		Email: \texttt{haowufd@fudan.edu.cn}
	}
}
\date{\today}
\maketitle


\begin{abstract}
\noindent We analyze a Navier--Stokes--Cahn--Hilliard model for viscous incompressible two-phase flows where the mechanisms of chemotaxis, active transport and reaction are taken into account. The evolution system couples the Navier--Stokes equations for the volume-averaged fluid velocity, a convective Cahn--Hilliard equation for the phase-field variable, and an advection-diffusion equation for the density of certain chemical substance. This system is thermodynamically consistent and generalizes the well-known ``Model H'' for viscous incompressible binary fluids. For the initial-boundary value problem with a physically relevant singular potential in a general bounded smooth domain $\Omega\subset \mathbb{R}^3$, we first prove the existence and uniqueness of a local strong solution. When the initial velocity is small and the initial phase-field function as well as the initial chemical density are small perturbations of a local minimizer of the free energy, we establish the existence of a unique global strong solution. Afterwards, we show the uniqueness of asymptotic limit for any global strong solution as time goes to infinity and provide an estimate on the convergence rate. The proofs for global well-posedness and long-time behavior are based on the  dissipative structure of the system and the {\L}ojasiewicz--Simon approach. Our analysis reveals the effects of chemotaxis, active transport and a long-range interaction of Oono's type on the global dynamics of the coupled system.
\medskip \\
\noindent
\textit{Keywords}: Navier--Stokes--Cahn--Hilliard system, singular potential, chemotaxis, active transport, reaction, global well-posedness, long-time behavior. \medskip \\
\noindent
\textit{MSC 2010}: 35A01, 35A02, 35K35, 35Q92, 76D05.
\end{abstract}

\tableofcontents

\section{Introduction}
\setcounter{equation}{0}
\noindent
Let $\Omega \subset\mathbb{R}^3$ be a bounded domain with a smooth boundary $\partial\Omega$ and $T>0$. We consider the following  Navier--Stokes--Cahn--Hilliard type system
\begin{subequations}
	\begin{alignat}{3}
	&\partial_t  \bm{ v}+\bm{ v} \cdot \nabla  \bm {v}-\textrm{div} \big(2  \nu(\varphi) D\bm{v} \big)+\nabla p=(\mu+\chi \sigma)\nabla \varphi,\label{f3.c} \\
	&\textrm{div}\ \bm{v}=0,\label{f3.c1}\\
	&\partial_t \varphi+\bm{v} \cdot \nabla \varphi=\Delta \mu-\alpha(\overline{\varphi}-c_0),\label{f1.a} \\
	&\mu=-\varepsilon \Delta \varphi+ \frac{1}{\varepsilon} \varPsi'(\varphi)-\chi \sigma+\beta\mathcal{N}(\varphi-\overline{\varphi}),\label{f4.d}\\
	&\partial_t \sigma+\bm{v} \cdot \nabla \sigma= \Delta (\sigma-\chi\varphi), \label{f2.b}
	\end{alignat}
\end{subequations}
in $\Omega\times(0,T)$. The system \eqref{f3.c}--\eqref{f2.b} is subject to the classical boundary conditions
\begin{alignat}{3}
&\bm{v}=\mathbf{0},\quad {\partial}_{\bm{n}}\varphi={\partial}_{\bm{n}}\mu={\partial}_{\bm{n}}\sigma=0,\qquad\qquad &\textrm{on}& \   \partial\Omega\times(0,T),
\label{boundary}
\end{alignat}
and the initial conditions
\begin{alignat}{3}
&\bm{v}|_{t=0}=\bm{v}_{0}(x),\quad \varphi|_{t=0}=\varphi_{0}(x), \quad  \sigma|_{t=0}=\sigma_{0}(x), \qquad &\textrm{in}&\ \Omega.
\label{ini0}
\end{alignat}
Here, $\bm{n}$ is the unit outward normal vector on $\partial \Omega$, and $\partial_{\bm{n}}$ denotes the outer normal derivative on $\partial \Omega$.

The flow of a two-phase (or multi-phase) fluid mixture has attracted considerable attention in many application fields of science and engineering. The complicated interplay between the motion of moving and deforming free interfaces and the dynamics of binary fluids can be effectively described by the so-called diffuse interface models \cite{A2012,AMW,EG19jde,Gur,GLSS,HH,LS,LT98}. The system \eqref{f3.c}--\eqref{f2.b} under investigation is a simplified variant of the general thermodynamically consistent diffuse interface model that was derived in \cite{LW} for a mixture of two viscous incompressible fluids with a chemical species subject to the diffusion process as well as some important transport mechanism like chemotaxis (see also \cite{Sitka} and the references cited therein).
The fluid velocity $\bm{v}: \Omega \times(0,T)\to \mathbb{R}^3$ is taken as the volume-averaged velocity with $D\bm{v}=\frac{1}{2}(\nabla\bm{ v}+(\nabla\bm{ v}) ^ \mathrm{T})$ being the symmetrized velocity gradient, and the scalar function $p:\Omega \times(0,T)\to \mathbb{R}$ denotes the (modified) pressure of the mixture. The order parameter $\varphi:\Omega \times(0,T)\to \mathbb{R}$ with $\varphi\in [-1,1]$ denotes the difference in volume fractions of the binary fluid mixture such that the region $\{\varphi=1\}$ represents the  fluid 1 and $\{\varphi=-1\}$ represents the fluid 2 (i.e., the values $\pm 1$ represent the pure phases). The variable $\sigma: \Omega \times(0,T)\to \mathbb{R}$ denotes the concentration of an unspecified chemical species (e.g., a nutrient in the context of tumor growth modelling \cite{EG19jde,GLSS,GL17e,MRS}).

 For the sake of simplicity, we assume that the density difference between the two components of the mixture is negligible and set the density to be one. The mobilities for $\varphi$ and $\sigma$ are also assumed to be positive constants (set to be one), but we allow the fluid mixture to have unmatched viscosities. Assuming that $\nu_1$, $\nu_2>0$ are viscosities of the two homogeneous fluids, the mean viscosity is modeled by the concentration dependent term $\nu=\nu(\varphi)$, for instance, a typical form is given by the linear combination:
\be
\nu(\varphi)=\nu_1\frac{1+\varphi}{2}+\nu_2\frac{1-\varphi}{2}.
\label{vis}
\ee
The variable $\mu: \Omega \times(0,T)\to \mathbb{R}$ denotes the chemical potential associated to $(\varphi, \sigma)$. In the diffuse interface framework, the macroscopically immiscible fluids undergo a smooth and rapid transition in an interfacial region between the two
components, where the parameter $\varepsilon>0$ is related to the thickness of these interfacial layers. Since in this paper we do not consider the asymptotic behavior as $\varepsilon\to 0^+$ (i.e., the sharp-interface limit), hereafter we simply take $\varepsilon =1$.
The constant coefficient $\chi \geq 0$ is related to some specific transport mechanisms such as chemotaxis (in \eqref{f4.d}) and active transport (in \eqref{f2.b}), we refer to \cite{GLSS} for detailed explanations in the context of tumor growth modelling. In the subsequent analysis, the sign of $\chi$ does not play a role, so we allow $\chi\in \mathbb{R}$. The nonlinear function $\varPsi$ denotes the homogeneous free energy density for the binary fluid mixture, which has a double-well structure with two minima and a local unstable maximum in between. A physically significant example is the Flory--Huggins type (see \cite{CH, Gur}):
\be
\varPsi (r)=\frac{\theta}{2}\big[(1-r)\ln(1-r)+(1+r)\ln(1+r)\big]+\frac{\theta_{0}}{2}(1-r^2),\quad \forall\, r\in[-1,1],
\label{pot}
\ee
where the constant parameters $\theta$ and $\theta_0$ fulfill   $0<\theta<\theta_{0}$. In the literature, the above singular potential $\varPsi$ is often approximated by a fourth-order polynomial (e.g., in the regime of shallow quench)
\be
\varPsi(r)=\frac{1}{4}\big(1-r^2\big)^2,\quad \forall\, r\in\mathbb{R}, \label{regular}
\ee
or by some more general polynomial functions \cite{Mi19}. This approximation rules out possible singularities of the logarithmic potential \eqref{pot} (and its derivatives) at $\pm 1$ and brings great convenience in the mathematical analysis as well as numerical simulations for the Cahn--Hilliard type equations, see \cite{BGM,DFW,GG2010,JWZ,LS,RH99,ZWH} and the references therein. On the other hand, we refer to \cite{AACGV,PP21} for some different type of singular potentials in the modelling of living tissues (e.g., a tumor).

In \eqref{f1.a} and \eqref{f4.d} we denote by
$\overline{\varphi}=|\Omega|^{-1}\int_\Omega \varphi(x)\,\mathrm{d}x$ the mean of the phase function $\varphi$ over $\Omega$. Besides, $\mathcal{N}$ denotes the inverse of the minus Laplacian operator subject to a homogeneous Neumann boundary condition on some function space with zero mean constraint (see Section \ref{pm}).
The two terms $\alpha(\overline{\varphi}-c_0)$ and $\beta\mathcal{N}(\varphi-\overline{\varphi})$ with $\alpha\geq 0$, $\beta\in \mathbb{R}$ represent certain nonlocal interactions (i.e., reaction) between the two fluid components. Neglecting the coupling with $\bm{v}$ and $\sigma$, for $\alpha=\beta=0$, we simply recover the classical Cahn--Hilliard equation with constant mobility \cite{CH,CMZ,Mi19}. When taking $\alpha=\beta>0$, we obtain the well-known Cahn--Hilliard--Oono (CHO) equation
\begin{align}
\partial_t \varphi+\alpha(\varphi-c_0) =\Delta(-\Delta \varphi+ \varPsi'(\varphi)),
\label{CHO}
\end{align}
which was proposed to describe the dynamics of
microphase separation of diblock copolymers \cite{OP87}. See also \cite{Glo95} for a different physical origin from a binary mixture with reversible isomerization chemical reaction, where the parameters $\alpha$ and $c_0$ can be determined by the forward and backward reaction rates. If $\overline{\varphi_0}=c_0$, the CHO equation \eqref{CHO} can be viewed as a conserved gradient flow of the Ohta--Kawasaki functional (with $\varPsi$ given by \eqref{regular})
$$
\mathcal{F}_{\mathrm{OK}}(\varphi)= \int_{\Omega}
 \left(\frac{1} {2}|\nabla \varphi|^2+ \varPsi(\varphi) \right) \mathrm{d}x + \alpha \int_{\Omega\times \Omega}
 G(x-y)(\varphi(x)-\overline{\varphi}) (\varphi(y)-\overline{\varphi})\,\mathrm{d}x\mathrm{d}y,
$$
where $G$ denotes the Green function associated to the minus Laplacian subject to a homogeneous Neumann boundary condition \cite{OK}. For results on well-posedness, long-time behavior and optimal control of the CHO equation, we refer to \cite{CGRS22,GGM2017,Mi11}.
\smallskip

\textbf{Known results}. With a regular potential including \eqref{regular} and some more general reaction terms, the initial boundary value problem \eqref{f3.c}--\eqref{ini0} was first studied in \cite{LW}, where the authors proved the existence of global weak solutions in two and three dimensions, and the existence of a unique global strong solution in two dimensions. However, due to the coupling between $\varphi$ and $\sigma$ as well as the loss of maximum principle for the fourth order Cahn--Hilliard equation with regular potentials, they had to impose some technical assumption on the coefficients to achieve the existence of solutions. Later in \cite{H}, the author removed this restriction by considering a singular potential like \eqref{pot}, which is physically relevant for applications in materials science. By a semi-Galerkin approach, she was able to prove the existence of global weak solutions in both two and three dimensions as well as the uniqueness of global weak solutions in two dimensions. For the same system, the authors of \cite{H1} further established the global strong well-posedness for arbitrarily large and regular initial data in the two dimensional setting. The proofs carried out in \cite{H,H1} rely on the key property that the singular potential $\varPsi$ can guarantee the phase function $\varphi$ to always stay in the physical interval $[-1,1]$ along time evolution (thus, a uniform $L^\infty_tL^\infty_x$-bound on $\varphi$ is available). Moreover, in the two dimensional case, $\varphi$ can be strictly separated from the pure states $\pm 1$ once $t>0$. This separation property plays an essential role in the study of regularity and long-time behavior of solutions to the Cahn--Hilliard equation with a singular potential \cite{A2007,CMZ,GGM2017,Mi19,MZ04}, we also refer the reader to \cite{A2009,Boyer,CG,Gio2021,GGW,GMT} for studies of generalized systems with fluid coupling.
\smallskip

\textbf{Our contribution}. The well-posedness for problem \eqref{f3.c}--\eqref{ini0} with a singular potential including \eqref{pot} in three dimensions remains an open question so far. Our aim here is to make a first contribution in this aspect: under coupled effects of chemotaxis, active transport and a specific type of reaction, we first establish the global well-posedness of problem \eqref{f3.c}--\eqref{ini0} with a singular potential in a bounded smooth domain $\Omega$ in $\mathbb{R}^3$, then we characterize the long-time behavior of its global strong solutions as $t\to+\infty$. Since the system under consideration contains the Navier--Stokes equations driven by a capillary force term as a subsystem, it is naturally to expect that the global well-posedness result can only be obtained under suitable smallness assumptions on the initial data, that is, near a certain equilibrium. On the other hand, the nonconvexity of the free energy $\mathcal{F}(\varphi, \sigma)$ (see \eqref{fe1} below) implies that problem \eqref{f3.c}--\eqref{ini0} may admit a set of (nontrivial) equilibria with rather complicated structure. Thus, classical techniques to prove the existence of global small solutions (e.g., linearization combined with a small energy argument) do not apply.

Let us first summarize the main results of this paper. Details of their statements are presented in Section \ref{sec:mainr} below.
\begin{itemize}
\item[(A)] \textit{Global well-posedness: Theorem \ref{3main}}. When the initial velocity $\bm{v}_0$ is small, the initial phase-field function $\varphi_0$ and the chemical density $\sigma_0$ are small perturbations of a given local minimizer $(\varphi_*,\sigma_*)$ of the free energy $ \mathcal{F}(\varphi,\sigma)$, problem \eqref{f3.c}--\eqref{ini0} admits a unique global strong solution $(\bm{v},\varphi,\mu,\sigma)$ that is uniformly bounded for $t\geq 0$. Moreover, any energy minimizer of $\mathcal{F}$ is Lyapunov stable under the evolution governed by problem \eqref{f3.c}--\eqref{ini0}.
\item[(B)] \textit{Long-time behavior: Theorem \ref{3main1}}. Every global strong solution $(\bm{v},\varphi, \sigma)$ obtained in Theorem \ref{3main} converges to a single equilibrium $(\bm{0}, \varphi_\infty, \sigma_\infty)$ as $t\to +\infty$. Besides, we obtain an estimate on the convergence rate.
\end{itemize}

\textbf{Features of the problem}.
The hydrodynamic system \eqref{f3.c}--\eqref{f2.b} is thermodynamically consistent as shown in \cite{LW}. The free energy associated to it is given by (taking $\varepsilon=1$ for simplicity)
\be
 \mathcal{F}(\varphi,\sigma)=\int_{\Omega}
 \left(\frac{1} {2}|\nabla \varphi|^2+ \varPsi(\varphi)+\frac{1}{2}|\sigma|^2-\chi\sigma\varphi +\frac{\beta}{2}|\nabla\mathcal{N}(\overline{\varphi}-\varphi)|^2\right) \mathrm{d}x.\label{fe1}
\ee
Then a direct computation leads to the following energy balance equation for sufficiently smooth solutions $(\bm{v}, \varphi,\mu, \sigma)$:
\begin{align}
& \frac{\mathrm{d}}{\mathrm{d}t} \left( \int_{\Omega} \frac{1}{2}|\bm{v}|^2\, \mathrm{d}x+ \mathcal{F}(\varphi,\sigma)\right)   +\int_{\Omega} \left( 2\nu(\varphi)|D\bm{v}|^2 + |\nabla \mu|^2+|\nabla(\sigma-\chi\varphi)|^2\right) \mathrm{d}x\nonumber\\
&\quad  =-\alpha\int_\Omega (\overline{\varphi}-c_0)\mu\, \mathrm{d}x.
\label{BEL}
\end{align}
Besides, (formally) integrating  the equation \eqref{f1.a} over $\Omega$ and using the boundary condition \eqref{boundary}, we have
\begin{align}
\frac{\mathrm{d}}{\mathrm{d}t}(\overline{\varphi}-c_0)+\alpha(\overline{\varphi}-c_0)=0,
\label{mph1}
\end{align}
which implies
\begin{align}
\overline{\varphi}(t)-c_0=(\overline{\varphi_0}-c_0)e^{-\alpha t},\quad \forall\, t\geq 0.
\label{mph2}
\end{align}
Hence, if $\alpha=0$, or $\overline{\varphi_0}=c_0$ for $\alpha>0$, then the mass $\overline{\varphi}(t)$ is conserved in time, otherwise, $\overline{\varphi}(t)$ converges exponentially fast to $c_0$ provided that $\alpha>0$ (this is the so-called off-critical case, cf. \cite{BGM,GGM2017,MT}).

The energy balance \eqref{BEL} and the mass relation \eqref{mph2} play a crucial role in the subsequent analysis for problem \eqref{f3.c}--\eqref{ini0}. To this end, we observe that the well-known ``Model H'' for incompressible binary fluids (see \cite{HH,Gur}) can be recovered from the system \eqref{f3.c}--\eqref{f2.b} by simply neglecting $\sigma$ and taking $\alpha=\beta=0$. Then it follows from \eqref{BEL} and \eqref{mph2} that the resulting Navier--Stokes--Cahn--Hilliard (NSCH) system has a dissipative structure (i.e., its total energy is decreasing in time) and the mass $\overline{\varphi}$ is conserved for all $t\geq 0$. Based on these two facts, the NSCH system (with either a regular potential or a singular potential) has been extensively studied in the literature. In this respect, we may quote \cite{A2009,Boyer,GG2010,GMT,LS,ZWH, Zhou20} and the references therein for results on well-posedness, long-time behavior of global solutions and asymptotic analyses.
Next, in the system \eqref{f3.c}--\eqref{f2.b}, neglecting $\sigma$ and taking the convective CHO equation (cf. \eqref{CHO})
\begin{subequations}
	\begin{alignat}{3}
	&\partial_t \varphi+\bm{v} \cdot \nabla \varphi  =\Delta \widetilde{\mu} -   \alpha(\varphi-c_0),\label{f1.av} \\
	&\widetilde{\mu}=- \Delta \varphi + \varPsi'(\varphi),\label{f4.dv}
	\end{alignat}
\end{subequations}
we arrive at the so-called Navier--Stokes--Cahn--Hilliard--Oono (NSCHO) system that
has been analyzed in \cite{BGM,MT}. The NSCHO system  provides a description about the hydrodynamic effects on phase separation of binary mixtures with reversible chemical reaction \cite{H03}. In \cite{BGM}, the authors investigated global dynamical behavior of the NSCHO system in two dimensions with a regular potential (i.e., a polynomial) by constructing a family of exponential attractors that is continuous with respect to the parameter $\alpha$. Later, the NSCHO system with $c_0=0$ and a singular potential like \eqref{pot} was analyzed in \cite{MT}, where the authors proved the existence of global weak solutions in both two and three dimensions and showed the sequential convergence of solutions as $\alpha\to 0^+$.  However, several basic issues remain open for the NSCHO system studied therein (in particular, the case with a physically relevant singular potential \eqref{pot}), for instance, local well-posedness in two and three dimensions, instantaneous/eventual regularity of global weak solutions, asymptotic stabilization of a bounded global solution towards an equilibrium as $t\to +\infty$, and so on.

In view of \cite{BGM,MT}, we find that the analysis for the NSCHO system turns out to be more tricky, because of a combination of the hydrodynamic effect with the nonlocal reaction term of Oono's type $\alpha(\varphi-c_0)$. In particular, it is unclear whether the NSCHO system studied therein can admit a Lyapunov functional or not. Indeed, with \eqref{f1.av}--\eqref{f4.dv} instead of \eqref{f1.a}--\eqref{f4.d} and neglecting $\sigma$ again, we can derive the following energy identity:
\begin{align}
& \frac{\mathrm{d}}{\mathrm{d}t}
 \int_{\Omega} \left(\frac{1}{2}|\bm{v}|^2+\frac{1} {2}|\nabla \varphi|^2+ \varPsi(\varphi)\right) \mathrm{d}x
 +\int_{\Omega} \left( 2\nu(\varphi)|D\bm{v}|^2 + |\nabla \widetilde{\mu}|^2\right) \mathrm{d}x =-\alpha\int_\Omega (\varphi-c_0)\widetilde{\mu}\, \mathrm{d}x.
\notag
\end{align}
Comparing with the right-hand side of \eqref{BEL}, the NSCHO system in \cite{BGM,MT} has a higher-order energy production rate without a definite sign (recalling that $\overline{\varphi}$ only depends on time and decays to $c_0$ exponentially fast). This fact yields an essential difficulty to study its global well-posedness and long-time behavior, in particular, the convergence to a single equilibrium. For the single CHO equation \eqref{CHO} without fluid interaction, the authors of \cite{GGM2017} overcame the above mentioned difficulty by considering the Ohta–Kawasaki functional $\mathcal{F}_{\mathrm{OK}}$ and rewriting the equation as
 \begin{align}
\partial_t \varphi+\alpha(\overline{\varphi}-c_0) =\Delta\big(-\Delta \varphi+ \varPsi'(\varphi)+\alpha \mathcal{N}(\varphi-\overline{\varphi})\big)=\Delta \frac{\delta \mathcal{F}_{\mathrm{OK}}(\varphi)}{\delta \varphi}.
\label{CHOb}
\end{align}
When the hydrodynamic effect is taken into account as in our system \eqref{f3.c}--\eqref{f2.b},
inspired by \cite{GGM2017,H03}, we choose to replace the Oono's reaction term $ \alpha(\varphi-c_0)$ by a more general form $    \beta(\varphi-\overline{\varphi})+\alpha(\overline{\varphi}-c_0)$
with $\beta\in \mathbb{R}$ and $\alpha \geq 0$ (recall \eqref{f1.a}--\eqref{f4.d}). The first part $\beta(\varphi-\overline{\varphi})$ contributes not only  to the free energy $\mathcal{F}$ (see \eqref{fe1}), but also to the capillary force (see \eqref{f3.c}) at free interfaces between the binary fluids (cf. \cite{H03}).
In particular, the sign of the interaction strength $\beta$ can indicate the possible suppression or enhancement of chemical reaction for the phase separation and coarsening processes (see \cite{Choski,DFW}). Meanwhile, the second part $\alpha(\overline{\varphi}-c_0)$ yields a possible mass transfer between the two components as well as a ``lower-order'' energy production rate as in \eqref{BEL}.
 This simple modification not only agrees with the variational framework as in \cite{GLSS,LW}, but also provides a suitable correction to the NSCHO system investigated in the previous literature \cite{BGM,MT}. From the mathematical point of view, we are now able to show that problem \eqref{f3.c}--\eqref{ini0} admits a Lyapunov functional (see \eqref{BEL5} below), and as a consequence, the resulting PDE system enjoys a dissipative structure. This crucial fact enables us to study the existence of global strong solutions (with small initial data) and their long-time behavior by the \L ojasiewicz--Simon approach \cite{C2003,GG2006,LS83}. On the other hand, we note that more general reaction terms have been proposed and analyzed in the Cahn--Hilliard equation for biological applications \cite{Fa15,Lam22} and its extended systems for the tumor modelling \cite{EG19jde,GL17,GL17e,GLSS,GLRS,KS22,MRS}. It is an interesting and challenging task to show the global strong well-posedness and long-time behavior for the hydrodynamic system \eqref{f3.c}--\eqref{f2.b} with general reaction terms other than Oono's type.
\smallskip

\textbf{Strategy of proofs}. To prove Theorem \ref{3main}, we first show the existence and uniqueness of a local strong solution to problem \eqref{f3.c}--\eqref{ini0} in three dimensions (see Theorem \ref{ls}), based on a semi-Galerkin approximation scheme similar to that in \cite{H}. In this framework, we perform a Galerkin approximation only for the Navier--Stokes equations \eqref{f3.c}, but solve the coupled system for $(\varphi, \sigma)$ independently. Thus, we can take the advantage that the approximate velocity $\bm{v}^m$ is sufficiently regular in space and the approximate phase function $\varphi^m$ always takes its values in the physical interval $[-1,1]$. The latter helps us to handle the interaction with chemotaxis (see \eqref{fe1}) and remove the extra assumption on the coefficients as in \cite{LW}. Besides, in order to rigourously justify the higher-order estimates for the approximate solutions, we need to introduce an additional approximation involving the convective viscous Cahn--Hilliard equation (cf. \cite{MZ04,Gio2022}) together with some proper regularizations for the initial datum $(\varphi_0, \sigma_0)$ (cf. \cite{BGM,GMT}). For the viscous Cahn--Hilliard equation with a singular potential like \eqref{pot}, the viscous term allows one to derive a strict separation of the phase function $\varphi$ from the pure phases $\pm 1$ and thus yields sufficient spatial regularity thanks to the elliptic estimate \cite{H2,MZ04}. Finally, uniqueness of the local strong solution easily follows from an energy method similar to that in \cite{GMT,H}.

To prove the global existence, a naturally idea is to derive uniform-in-time estimates for the local strong solution so that it can be extended to the whole interval $[0,+\infty)$. As we have mentioned before, the method involving linearization does not work due to the nonconvexity of the free energy $\mathcal{F}$ and the complicated structure of the equilibrium set. Our proof is inspired by the argument in \cite{LL95} for the incompressible liquid crystal flow. The basic idea is, if one can show that the total energy (i.e., the sum of kinetic energy and free energy) does not drop ``too much'' during the evolution, then the strong solution will exist globally in time. This criterion on global well-posedness can be easily realized if the initial velocity $\bm{v}_0$ is small, and the initial phase-field function $\varphi_0$ as well as the chemical density $\sigma_0$ are small perturbations of a \emph{global} minimizer of the free energy $\mathcal{F}(\vp,\sigma)$, see e.g., \cite{ZWH} for related results on the NSCH system with a regularity potential. However, the situation is more involved if we consider the case near a \emph{local} minimizer $(\varphi_{*},\sigma_{*})$. Thanks to the existence of a Lyapunov functional, we observe that lower-order norm of the solution $(\bm{v}, \varphi,\sigma)$ in $\bm{L}^2\times H^1\times L^2$ remains uniformly bounded. However, its higher-order norm in $\bm{H}^1\times H^3\times H^1$ can grow fast and may even blow up in finite time. This situation can be avoided if we can guarantee the solution to stay in a suitable small neighbourhood of $(\bm{0},\varphi_{*},\sigma_{*})$ in $\bm{L}^2\times H^2\times L^2$ so that the total energy cannot drop too much as mentioned before. To achieve this goal, we apply the \L ojasiewicz--Simon approach \cite{LS83} that has been proved to be an efficient tool to study the long-time behavior of nonlinear evolution equations, see \cite{A2007,C2003,GG2006,GG2010,GGW,JWZ,RH99,ZWH} and the references therein.

As a preliminary step, we first investigate the existence and regularity of energy minimizers of the free energy $\mathcal{F}(\vp, \sigma)$ (see Proposition \ref{prop-le2}).
Then we derive an extended {\L}ojasiewicz--Simon type inequality for the phase function together with the chemical density (see Theorem \ref{LSmain}), which has its independent interest. Our result treats the interactions induced by chemotaxis/active transport and allows possible mass transfer, so that it generalizes the classical {\L}ojasiewicz--Simon inequality for the Cahn--Hilliard equation \cite{A2007,GG2006,RH99}. We note that the strict separation of any steady phase function from pure states $\pm1$ plays an important role in the derivation of Theorem \ref{LSmain}, since the singular potential $\varPsi$ can be regarded as a smooth (even analytic) function on some compact subset of $(-1,1)$ within its small neighbourhood in $H^2$ (recall the continuous embedding $H^2\hookrightarrow L^\infty$ in three dimensions).

However, for the evolution problem \eqref{f3.c}--\eqref{ini0} with a logarithmic potential \eqref{pot} in three dimensions, a uniform and strict separation of the solution $\varphi$ from $\pm 1$ remains an open question. This is the case even for the single Cahn--Hilliard equation \cite{Mi19,MZ04}. Thus, to prove the global existence, we need to control not only the growth of higher-order norm of $(\bm{v}, \varphi, \sigma)$ in  $\bm{H}^1\times H^3\times H^1$, but also the distance of $\varphi$ from the pure states $\pm 1$. Our strategy can be roughly summarized as follows. Thanks to Theorem \ref{ls}, we start with a (unique) local strong solution  $(\bm{v}, \varphi, \sigma)$ defined on a certain finite interval $[0,T_2]$ with $T_2>0$, where we have a finite growth of $\|\bm{v}\|_{\bm{H}^1}$, $\|\varphi\|_{H^3}$, $\|\sigma\|_{H^1}$ and $\|\varphi\|_{C(\overline{\Omega})}$. We choose the initial datum $(\bm{v}_0,\varphi_0, \sigma_0)$ to be sufficiently close to $(\bm{0}, \varphi_*, \sigma_*)$ in $\bm{L}^2\times H^2\times L^2$ so that the possible changes of the total energy as well as the free energy on $[0,T_2]$ are sufficiently small. Then applying the {\L}ojasiewicz--Simon approach (via a contradiction argument), we show that the small energy variation leads to a small change of distances in lower-order norms for $\|\varphi-\varphi_*\|_{(H^1)'}$ and $\|\sigma-\sigma_*\|_{(H^1)'}$, which can
compensate the growth of $\|\varphi\|_{H^3}$ and $\|\sigma\|_{H^1}$ on $[0,T_2]$. By interpolation, we find that $\|\varphi-\varphi_*\|_{H^2}$ and $\|\sigma-\sigma_*\|_{L^2}$ can be kept small on $[0,T_2]$ as required. From the Sobolev embedding theorem,  $\|\varphi-\varphi_*\|_{C(\overline{\Omega})}$ is also properly controlled so that $\varphi$ is strictly separated from $\pm 1 $ on $[0,T_2]$, with the same upper bound on the distance as the initial datum $\varphi_0$. Taking advantage of the dissipative structure of the system \eqref{f3.c}--\eqref{f2.b}, we can find some $t^*\in [T_2/2,T_2]$ and take $(\bm{v}(t^*),\varphi(t^*), \sigma(t^*))$ as the new initial datum to conclude the existence of a unique local strong solution $(\bm{v}, \varphi, \sigma)$ on $[t^*,t^*+T_2]$ by applying Theorem \ref{ls}. Thanks to the uniqueness, we indeed obtain a unique strong solution $(\bm{v}, \varphi, \sigma)$ on the enlarged interval $[0,3T_2/2]$, which turns out to satisfy the same estimates for its norms as on $[0,T_2]$. Then we can repeat the above procedure and extend the local strong solution with a fixed time step of $T_2/2$ to obtain a unique global strong solution that is uniformly bounded for all $t\geq 0$. This iterative argument has been successfully applied to some simpler diffuse-interface models for incompressible binary fluids with a singular potential like \eqref{pot}, for instance, the Hele--Shaw--Chan--Hilliard system (see \cite{GGW}) and the Model H (see \cite{Zhou20}).

Once the existence of a bounded global strong solution is established, we can study its long-time behavior by applying the \L ojasiewicz--Simon approach as in \cite{A2009,ZWH} for the Model H. Moreover, we can derive an estimate on the convergence rate by investigating the energy decay (cf. \cite{HJ2001}) and using an energy method (see \cite{W07,ZWH}).
\smallskip

\textbf{Plan of the paper}. The remaining part of this paper is organized as follows. In Section \ref{pm}, we introduce the functional settings and state the main results. Section \ref{gss} deals with the existence and uniqueness of a local strong solution.
Section \ref{sec:stapro} is devoted to the study of the  stationary problem. We show the existence and regularity of energy minimizers for the free energy $\mathcal{F}(\varphi,\sigma)$ and establish an extended {\L}ojasiewicz--Simon inequality.
In Section \ref{sec:glostr}, we prove the existence and uniqueness of a global strong solution, provided that the initial velocity $\bm{v}_0$ is small, while the initial phase-field function and the chemical density $(\varphi_0,\sigma_0)$ are small perturbations of a given local minimizer of $\mathcal{F}$. In Section \ref{lbg}, we prove the convergence to a single equilibrium for any bounded global strong solution as time goes to infinity and derive an estimate on the convergence rate.

\section{Main Results}\label{pm}
\setcounter{equation}{0}
\subsection{Preliminaries}
First, we introduce some notations and basic tools that will be used in this paper. Let $X$ be a real Banach space. Its dual space is denoted by $X'$, and the duality pairing is denoted by
$\langle \cdot,\cdot\rangle_{X',X}$. Given an interval $J\subset [0,+\infty)$, $L^q(J;X)$ with $q\in [1,+\infty]$ denotes the space consisting of Bochner measurable $q$-integrable/essentially bounded functions with values in the Banach space $X$. For $q\in [1,+\infty]$, $W^{1,q}(J;X)$ is the space of all $f\in L^q(J;X)$ with $\partial_t f\in L^q(J;X)$. For $q=2$, we set $H^1(J;X)=W^{1,2}(J;X)$. Throughout the paper, we assume that $\Omega$ is a bounded domain in $\mathbb{R}^3$ with boundary $\partial\Omega$ of class $C^4$. For any $q \in [1,+\infty]$, $L^{q}(\Omega)$ denotes the Lebesgue space with norm $\|\cdot\|_{L^{q}}$. For $k\in \mathbb{Z}^+$, $q\in [1,+\infty]$, $W^{k,q}(\Omega)$ denotes the Sobolev space with norm $\|\cdot\|_{W^{k,q}}$.
For $q = 2$, we use the notation $H^{k}(\Omega)= W^{k,2}(\Omega)$ with the norm $\|\cdot\|_{H^{k}}$. The norm and inner product on $L^{2}(\Omega)$ are simply denoted by $\|\cdot\|$ and $(\cdot,\cdot)$, respectively. The boldface letter $\bm{X}$ denotes the vectorial space $X^3$ endowed with the usual product structure. For instance, the space
$\bm{L}^q(\Omega) := L^q (\Omega; \mathbb{R}^3)$ ($q\in [1,+\infty]$) consists of all $q$-integrable/essentially bounded vector-fields.

For every $f\in (H^1(\Omega))'$, we denote by $\overline{f}$ its generalized mean value over $\Omega$ such that
$\overline{f}=|\Omega|^{-1}\langle f,1\rangle_{(H^1)',H^1}$; if $f\in L^1(\Omega)$, then its mean value is simply given by $\overline{f}=|\Omega|^{-1}\int_\Omega f \,\mathrm{d}x$.
For the sake of convenience, we denote by $L^2_{0}(\Omega):=\{f\in L^2(\Omega)\ |\ \overline{f} =0\}$ the linear subspace of $L^2(\Omega)$ with zero mean. Besides, in view of the homogeneous Neumann boundary condition \eqref{boundary}, we introduce the space
$H^2_{N}(\Omega):=\{f\in H^2(\Omega)\ |\  \partial_{\bm{n}}f=0 \ \textrm{on}\  \partial \Omega\}$.
We denote by $\mathcal{A}_N\in \mathcal{L}\big(H^1(\Omega),(H^1(\Omega))'\big)$ the realization of $-\Delta$ subject to the homogeneous Neumann boundary condition such that
\begin{equation}\nonumber
   \langle \mathcal{A}_N u,v\rangle_{(H^1)',H^1} := \int_\Omega \nabla u\cdot \nabla v \, \mathrm{d}x,\quad \text{for }\,u,v\in H^1(\Omega).
\end{equation}
For linear spaces $V_0=\{ u \in H^1(\Omega)\ |\  \overline{u}=0\}$, $V_0'= \{ u \in (H^1(\Omega))'\ |\  \overline{u}=0 \}$, the restriction of $\mathcal{A}_N$ from $V_0$ onto $V_0'$ is an isomorphism. In particular, $\mathcal{A}_N$ is positively defined on $V_0$ and self-adjoint. We denote its inverse map by $\mathcal{N} =\mathcal{A}_N^{-1}: V_0'
\to V_0$. Then for every $f\in V_0'$, $u= \mathcal{N} f \in V_0$ is the unique weak solution of the Neumann problem
$$
\begin{cases}
-\Delta u=f, \quad \text{in} \ \Omega,\\
\partial_{\bm{n}} u=0, \quad \ \  \text{on}\ \partial \Omega.
\end{cases}
$$
Besides, we have
\begin{align}
&\langle \mathcal{A}_N u, \mathcal{N} g\rangle_{V_0',V_0} =\langle  g,u\rangle_{(H^1)',H^1}, \quad \forall\, u\in V, \ \forall\, g\in V_0',\label{propN1}\\
&\langle  g, \mathcal{N} f\rangle_{V_0',V_0}
=\langle f, \mathcal{N} g\rangle_{V_0',V_0} = \int_{\Omega} \nabla(\mathcal{N} g)
\cdot \nabla (\mathcal{N} f) \, \mathrm{d}x, \quad \forall \, g,f \in V_0',\label{propN2}
\end{align}
and the chain rule
\begin{align}
&\langle \partial_t u, \mathcal{N} u(t)\rangle_{V_0',V_0}
=\frac{1}{2}\frac{\mathrm{d}}{\mathrm{d}t}\|\nabla \mathcal{N} u\|^2,\quad \textrm{a.e. in}\ (0,T),\ \ \text{for any}\ u\in H^1(0,T; V_0').\nonumber
\end{align}
For any $f\in V_0'$, we set $\|f\|_{V_0'}=\|\nabla \mathcal{N} f\|$.
It is well-known that $f \to \|f\|_{V_0'}$ and $
f \to\big(\|f-\overline{f}\|_{V_0'}^2+|\overline{f}|^2\big)^\frac12$ are equivalent norms on $V_0'$ and $(H^1(\Omega))'$,
respectively (see \cite{MZ04}). Recall the well-known Poincar\'{e}--Wirtinger inequality:
\begin{equation}
\label{poincare}
\|f-\overline{f}\|\leq C \|\nabla f\|,\qquad \forall\,
f\in H^1(\Omega),
\end{equation}
where $C>0$ depends only on $\Omega$.
We then infer that $f\to \|\nabla f\|$ and  $f\to \big(\|\nabla f\|^2+|\overline{f}|^2\big)^\frac12$ are equivalent norms on $V_0$ and $H^1(\Omega)$.
We also report the following standard Hilbert interpolation inequality and elliptic estimates for the Neumann problem (assuming that $\Omega$ is sufficiently smooth):
\begin{align}
\|f\| &\leq \|f\|_{V_0'}^{\frac12} \| \nabla f\|^{\frac12},
\qquad \forall\, f \in V_0,\label{I}\\
\|\nabla \mathcal{N} f\|_{\bm{H}^{k}}& \leq C \|f\|_{H^{k-1}},
\qquad\quad\ \forall\, f\in H^{k-1}(\Omega)\cap L^2_0(\Omega),\quad k\in\mathbb{Z}^+.\label{N}
\end{align}

Next, we introduce the Hilbert spaces of solenoidal vector-valued functions (see e.g., \cite{G,S}). For a vector-valued Banach space $\bm{X}$ consisting of functions $\bm{f}: \Omega \rightarrow \mathbb{R}^3$ (e.g. $\bm{L}^2(\Omega)$ and $\bm{H}^1(\Omega)$), we denote $\bm{X}_{\mathrm{div}} $, $\bm{X}_{0,\mathrm{div}} $ by the closure of spaces  $C_{\mathrm{div}}^{\infty}(\Omega;\mathbb{R}^3)=\{\bm{f}\in C^{\infty}(\Omega;\mathbb{R}^3):\ \textrm{div}\bm{f}=0\}$,  $C_{0,\mathrm{div}}^{\infty}(\Omega;\mathbb{R}^3)=\{\bm{f}\in C_0^{\infty}(\Omega;\mathbb{R}^3):\ \textrm{div}\bm{f}=0\}$ with respect to the $\bm{X}$-norm, respectively. For the space $\bm{L}^2_{0,\mathrm{div}}(\Omega)$, we also use $(\cdot,\cdot)$ and $\|\cdot\|$ for its inner product and norm. It is well known that $\bm{L}^2(\Omega)$ can be decomposed into $\bm{L}^2_{0,\mathrm{div}}(\Omega)\oplus\bm{G}(\Omega)$, where $\bm{G}(\Omega):=\{\bm{f}\in\bm{L}^2(\Omega): \exists\, z\in H^1(\Omega),\ \bm{f}=\nabla z\}$. Then for any function $\bm{f} \in \bm{L}^2(\Omega)$, the Helmholtz--Weyl decomposition holds (see  \cite[Chapter \uppercase\expandafter{\romannumeral3}]{G}):
\be
\bm{f}=\bm{f}_{0}+\nabla z,\quad\text{where}\  \bm{f}_{0} \in \bm{L}^2_{0,\mathrm{div}}(\Omega),\ \nabla z \in \bm{G}(\Omega).\nonumber
 \ee
Consequently, we can define the Leray projection onto the space of divergence-free functions $\bm{P}:\bm{L}^2(\Omega)\to \bm{L}^2_{0,\mathrm{div}}(\Omega)$ such that $\bm{P}(\bm{f})=\bm{f}_{0}$. The space $\bm{H}^1_{0,\mathrm{div}}(\Omega)$ is equipped with the scalar product $
(\bm{u},\bm{v})_{\bm{H}^1_{0,\mathrm{div}}}:=(\nabla \bm{u},\nabla \bm{v})$ and the norm $\|\bm{u}\|_{\bm{H}^1_{0,\mathrm{div}}}=\|\nabla \bm{u}\|$.
From the well-known Korn's inequality
$$
\|\nabla \bm{u}\|\leq \sqrt{2}\|D\bm{u}\|,\quad \forall\, \bm{u}\in \bm{H}^1_{0,\mathrm{div}}(\Omega),
$$
we see that $\|D\bm{u}\|$ is an equivalent norm for $\bm{H}^1_{0,\mathrm{div}}(\Omega)$. Next, we introduce the Stokes operator $\bm{S}: D(\bm{S})= \bm{H}^1_{0,\mathrm{div}}(\Omega)\cap\bm{H}^2(\Omega) \to\bm{L}^2_{0,\mathrm{div}}(\Omega)$ such that $\bm{S}=\bm{P}(-\Delta)$ (see e.g., \cite[Chapter III]{S}).
The space $\bm{H}^1_{0,\mathrm{div}}(\Omega)\cap\bm{H}^2(\Omega)$ can be equipped with the inner product $(\bm{S}\bm{u},\bm{S}\bm{v})$ and the norm $\|\bm{S}\bm{u}\|$.
 For any $\bm{u}\in D(\bm{S})$ and $\bm{\zeta} \in \bm{H}^1_{0,\mathrm{div}}(\Omega)$, it holds
$(\bm{S}\bm{u},\bm{\zeta})=(\nabla \bm{u},\nabla\bm{\zeta})$. The operator $\bm{S}$ is a canonical isomorphism from $\bm{H}^1_{0,\mathrm{div}}(\Omega)$ to $(\bm{H}^1_{0,\mathrm{div}}(\Omega))'$. Denote its inverse map by $\bm{S}^{-1}$. For any $\bm{f}\in (\bm{H}^1_{0,\mathrm{div}}(\Omega))'$, there is a unique $\bm{u}=\bm{S}^{-1}\bm{f}\in\bm{H}^1_{0,\mathrm{div}}(\Omega)$ such that
\be
(\nabla\bm{S}^{-1}\bm{f},\nabla \bm{\zeta}) =\langle\bm{f},\bm{\zeta}\rangle_{(\bm{H}^1_{0,\mathrm{div}})', \bm{H}^1_{0,\mathrm{div}}},\quad \forall\, \bm{\zeta} \in \bm{H}^1_{0,\mathrm{div}}(\Omega).\nonumber
\ee
Then we see that $\|\nabla\bm{S}^{-1}\bm{f}\| =\langle\bm{f},\bm{S}^{-1}\bm{f}\rangle_{(\bm{H}^1_{0,\mathrm{div}})',\bm{H}^1_{0,\mathrm{div}}}^{\frac{1}{2}}$ is an equivalent norm on $(\bm{H}^1_{0,\mathrm{div}}(\Omega))'$ and the chain rule holds
\be
\langle\partial_t \bm{f}(t),\bm{S}^{-1}\bm{f}(t)\rangle_{(\bm{H}^1_{0,\mathrm{div}})',\bm{H}^1_{0,\mathrm{div}}} =\frac{1}{2}\frac{\mathrm{d}}{\mathrm{d}t}\|\nabla\bm{S}^{-1}\bm{f}\|^2,
\ee
for almost all $t \in (0,T)$ and any $\bm{f}\in H^1(0,T;\bm{H}^1_{0,\mathrm{div}}(\Omega)')$.

Finally, let us recall the Ladyzhenskaya, Agmon and Gagliardo--Nirenberg inequalities in three dimensions:
\begin{align*}
&\|f\|_{L^3}\leq C\|f\|_{H^1}^\frac12\|f\|^\frac12,\qquad\quad\   \forall\,f\in H^1(\Omega),\\
&\|f\|_{L^\infty}\leq  C\|f\|_{H^2}^\frac12\|f\|_{H^1}^\frac12,\qquad \ \ \forall\,f\in H^2(\Omega),\\
&\|\nabla f\|_{\bm{L}^4}\leq C\|f\|_{H^2}^\frac12\|f\|_{L^\infty}^\frac12,\qquad \forall\, f\in H^2(\Omega),
\end{align*}
where the constant $C>0$ only depends on $\Omega$.
We also recall the following regularity result for the Stokes operator (see e.g.,  \cite[Chapter III, Theorem 2.1.1]{S} and \cite[Lemma B.2]{GMT}):
\bl \label{stokes}
 \rm Let $\Omega$ be a bounded domain of class $C^2$ in $\mathbb{R}^3$. For any $\bm{f} \in \bm{L}^2_{0,\mathrm{div}}(\Omega)$,
there exists a unique pair $\bm{u}\in \bm{H}^1_{0,\mathrm{div}}(\Omega)\cap\bm{H}^2(\Omega)$ and $p\in H^1(\Omega)\cap L_0^2(\Omega)$ such that $-\Delta \bm{u}+\nabla p=\bm{f}$ a.e. in $\Omega$, that is, $\bm{u}=\bm{S}^{-1}\bm{f}$. Moreover, it holds
\begin{align*}
&\|\bm{u}\|_{\bm{H}^2}+\|\nabla p\|\le C\|\bm{f}\|,\quad
\|p\|\le C \|\bm{f}\|^\frac12\|\nabla \bm{S}^{-1}\bm{f}\|^\frac12,
 \end{align*}
where $C>0$ depends on $\Omega$ but is independent of $\bm{f}$.
 \el

Throughout the paper, the letters $C$, $C_i$ denote some generic positive constants, whose specific dependence will be pointed out if necessary.

\subsection{Main results}\label{sec:mainr}
\noindent
The following assumptions will be needed throughout the paper.
\begin{enumerate}
	\item[(H1)]\label{seta} The viscosity $\nu$ satisfies $\nu \in C^{2}(\mathbb{R})$ and \\
	\be
	\nu_{*} \leq \nu(r)\leq \nu^*,\quad |\nu'(r)|\leq \nu_{0},\quad|\nu''(r)|\leq \nu_{1},\quad \forall\, r \in \mathbb{R},\nonumber
	\ee
	where $\nu_{*}$, $\nu^*$, $\nu_{0}$ and $\nu_{1}$ are some positive constants.
	\item[(H2)]\label{item:as} The singular potential $\varPsi$ belongs to the class of functions $C\big([-1,1]\big)\cap C^{3}\big((-1,1)\big)$ and can be written into the following form
	\begin{equation}
	\varPsi(r)=\varPsi_{0}(r)-\frac{\theta_{0}}{2}r^2,\nonumber
	\end{equation}
	such that
	\begin{equation}
	\lim_{r\to \pm 1} \varPsi_{0}'(r)=\pm \infty\quad \text{and}\quad  \varPsi_{0}''(r)\ge \theta,\quad \forall\, r\in (-1,1),\nonumber
	\end{equation}
 with the strictly positive constants $\theta_{0},\  \theta$ satisfying $\theta_{0}-\theta>0$.
 There exists $\epsilon_0\in(0,1)$ such that $\varPsi_{0}''$ is nondecreasing in $[1-\epsilon_0,1)$ and nonincreasing in $(-1,-1+\epsilon_0]$.
	Besides, we make the extension $\varPsi_{0}(r)=+\infty$ for $r\notin[-1,1]$.
	 \item[(H3)] The function $\varPsi$ is real analytic in  $(-1,1)$.
	\item[(H4)]\label{sco} $\Omega$ is a bounded domain in $\mathbb{R}^3$, with boundary $\partial\Omega$ of class $C^4$. The coefficients $\chi$, $c_0$,  $\alpha$, $\beta$ are  prescribed  constants that satisfy
	\be
	\chi \in \mathbb{R},\quad c_0\in(-1,1),\quad \alpha\geq 0,\quad \beta\in\mathbb{R}. \nonumber
	\ee
\end{enumerate}
\begin{remark}
	 The logarithmic potential \eqref{pot} fulfills the assumptions (H2)--(H3). As indicated in \cite[Remark 2.1]{GMT}, one can easily extend the linear viscosity function \eqref{vis} to $\mathbb{R}$ in such a way to comply (H1). Indeed, since the singular potential guarantees that the phase function satisfies $\varphi\in [-1,1]$, the value of $\nu$ outside of $[-1,1]$ is not important and can be chosen as in (H1).
\end{remark}
\begin{remark}
We require the boundary $\partial \Omega$ to be sufficiently smooth in order to obtain some higher order spatial regularity of the solution. If $\partial \Omega$ belongs to the class $C^4$ as assumed in (H4), then by the classical elliptic estimate for the Neumann problem (see, e.g. \cite[Theorem 9.26]{Bre}), we can deduce that $\varphi \in  L_{\mathrm{loc}}^{2}\left(0,+\infty; H^{4}(\Omega)\right)$ as stated in Theorem \ref{3main} below.
\end{remark}

Our aim is to prove the existence and uniqueness of global strong solutions to problem \eqref{f3.c}--\eqref{ini0}, provided that the initial velocity $\bm{v}_0$ is suitably small, while the initial phase-field function $\varphi_0$ and the initial chemical density $\sigma_0$ are small perturbations of certain local minimizer of the free energy $\mathcal{F}(\varphi,\sigma)$. To this end, we first introduce the following definition.

\begin{definition}[Energy minimizer]\label{min:ene}
Let $m_1\in (-1,1)$ and $m_2\in \mathbb{R}$ be two given constants. We define the function space
$$
	\mathcal{Z}_{m_1,m_2}=\left\{(z_1,z_2) \in H^{1}(\Omega)\times L^{2}(\Omega)\ \big|\  \|  z_1 \|_{L^{\infty}} \le 1,\
	\overline{ z_1} = m_1,\ \overline{z_2}=m_2\right\}.
	$$
For all $(\varphi,\sigma)\in \mathcal{Z}_{m_1,m_2}$, we consider the energy functional
	\be
	\mathcal{F}(\varphi,\sigma)
=\int_{\Omega} \left( \frac{1}{2}|\nabla \varphi|^2+\varPsi(\varphi)+\frac{1}{2}|\sigma|^2
-\chi\sigma\varphi +\frac{\beta}{2}|\nabla\mathcal{N}(\varphi-\overline{\varphi})|^2\right) \mathrm{d}x.\label{fe}
	\ee
We say that a pair $(\varphi_{*},\sigma_{*})\in \mathcal{Z}_{m_1,m_2}$ is a local minimizer of $\mathcal{F}$ in $\mathcal{Z}_{m_1,m_2}$, if there exists a positive constant $\lambda_*$ such that for any $(\varphi,\sigma)\in \mathcal{Z}_{m_1,m_2}$ satisfying $\|\varphi-\varphi_*\|_{H^1}+\|\sigma-\sigma_*\|\le \lambda_*$, it holds
	\be
	\mathcal{F}(\varphi_*,\sigma_*)\le \mathcal{F}(\varphi,\sigma).\label{min}
	\ee
	If the inequality \eqref{min} is satisfied for all $(\varphi,\sigma)\in \mathcal{Z}_{m_1,m_2}$, we say that $(\varphi_{*},\sigma_{*})$ is a global minimizer of $\mathcal{F}$ in $\mathcal{Z}_{m_1,m_2}$.
\end{definition}

For any given constants $m_1\in (-1,1)$ and $m_2\in \mathbb{R}$, let us consider the following minimization problem:
\begin{align}
 \text{Find}\ (\varphi_*,\sigma_*)\in  \mathcal{Z}_{m_1,m_2}\quad \text{such that}\quad  \mathcal{F}(\varphi_*,\sigma_*) =\inf_{(\varphi,\sigma)\in  \mathcal{Z}_{m_1,m_2}} \mathcal{F}(\varphi,\sigma).
 \label{pro:min1}
\end{align}
By definition, $\mathcal{Z}_{m_1,m_2}$ is a closed subspace of $H^1(\Omega)\times L^2(\Omega)$. Under the hypothesis (H2), we can obtain the following result on the existence and regularity of a minimizer (see Section \ref{sec:stapro} for its proof).
\bp\label{prop-le2}
Let $m_1\in (-1,1)$ and $m_2,\,\beta,\,\chi \in \mathbb{R}$ be given constants. Moreover, we suppose that $\Omega$ is a bounded domain in $\mathbb{R}^3$ with boundary $\partial\Omega$ of class $C^3$ and (H2) is satisfied.

(1) Problem \eqref{pro:min1} has at least one (global) minimizer $(\varphi_*,\sigma_*)\in  \mathcal{Z}_{m_1,m_2}$.

 (2) Let $(\varphi_{*},\sigma_{*})\in  \mathcal{Z}_{m_1,m_2}$ be an energy minimizer. Then it  belongs to the class $H^2_N(\Omega)\times H^2_N(\Omega)$ and there exists a positive constant $\delta_*\in (0,1)$ such that
\be
	\|\varphi_*\|_{C(\overline{\Omega})}\le 1-\delta_*.
\label{ps1s}
\ee
\ep

Based on Proposition \ref{prop-le2}, we now state the first main result in this paper.
\begin{theorem}[Global well-posedness near local energy minimizers] \label{3main}
 Let the hypotheses (H1)--(H4) be satisfied. Assume that   $(\varphi_{*},\sigma_{*})$ is a local minimizer of  $\mathcal{F}(\varphi,\sigma)$ in $\mathcal{Z}_{m_1,m_2}$ with $m_1=c_0$ and $m_2\in \mathbb{R}$ being given. For any $\epsilon>0$, there exist small constants $\eta_1,\, \eta_2,\, \eta_3 \in(0,1)$ such that for any initial data $\left(\boldsymbol{v}_{0}, \varphi_{0},\sigma_0\right)$ satisfying
	\begin{align*}
	&\boldsymbol{v}_{0} \in \bm{H}^1_{0,\mathrm{div}}(\Omega), \quad \varphi_{0} \in H^{3}(\Omega)\cap H^{2}_N(\Omega), \quad \sigma_{0} \in H^{1}(\Omega)\ \ \text{with}\ \   \overline{\sigma_0}=m_2, \\
	&\left\|\boldsymbol{v}_{0}\right\| \leq \eta_{1}, \quad\left\|\varphi_{0}-\varphi_{*}\right\|_{H^{2}} \leq \eta_{2}, \quad \|\sigma_{0}-\sigma_{*}\| \leq \eta_{3},
	\end{align*}
the initial boundary value problem \eqref{f3.c}--\eqref{ini0} admits a unique global strong solution $(\bm{v},\varphi,\mu,\sigma,p)$ such that
	\begin{align*}
	&\boldsymbol{v} \in C\left([0,+\infty) ;\bm{H}^1_{0,\mathrm{div}}(\Omega) \right) \cap L_{\mathrm{loc}}^{2}\left(0,+\infty ; \bm{H}^2(\Omega)\right) \cap H_{\mathrm{loc}}^{1}\left(0,+\infty ; \bm{L}^2_{0,\mathrm{div}}(\Omega)\right), \\
	&\varphi \in C\left([0,+\infty) ; H^{3}(\Omega)\right) \cap L_{\mathrm{loc}}^{2}\left(0,+\infty ; H^{4}(\Omega)\right) \cap H_{\mathrm{loc}}^{1}\left(0,+\infty ; H^{1}(\Omega)\right), \\
	&\mu \in C\left([0,+\infty) ; H^{1}(\Omega)\right) \cap L_{\mathrm{loc}}^{2}\left(0,+\infty ; H^{3}(\Omega)\right) \cap H_{\mathrm{loc}}^{1}\left(0,+\infty ;(H^{1}(\Omega))'\right), \\
	&\sigma \in C\left([0,+\infty) ; H^{1}(\Omega)\right) \cap L_{\mathrm{loc}}^{2}\left(0,+\infty ; H^{2}(\Omega)\right) \cap H_{\mathrm{loc}}^{1}\left(0,+\infty ; L^{2}(\Omega)\right), \\
	&p \in L_{\mathrm{loc}}^{2}\left(0,+\infty ; H^{1}(\Omega)\right).
	\end{align*}
The solution $(\bm{v},\varphi,\mu,\sigma,p)$ fulfills the system \eqref{f3.c}--\eqref{f2.b} almost everywhere in $\Omega\times (0,+\infty)$, the boundary conditions \eqref{boundary} almost everywhere on $\partial \Omega\times (0,+\infty)$ and the initial conditions \eqref{ini0} almost everywhere in $\Omega$.
Moreover, there exists $\delta\in (0,\delta_*)$ such that
\begin{align}
|\varphi(x,t)|\leq 1-\delta,\quad \forall\, (x,t)\in \overline{\Omega}\times [0,+\infty),
\label{glo-sep}
\end{align}
and
$(\varphi,\sigma)$ stays close to $(\varphi_{*},\sigma_{*})$ for all time, that is,
\begin{align}
	\|\varphi(t)-\varphi_{*}\|_{H^{2}} \leq \epsilon, \quad\|\sigma(t)-\sigma_{*}\| \leq \epsilon, \quad \forall\, t \geq 0.
\label{glo-lya}
\end{align}
\end{theorem}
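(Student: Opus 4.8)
\textbf{Proof strategy for Theorem \ref{3main}.} The plan is to combine the local existence result (Theorem \ref{ls}), the regularity of energy minimizers (Proposition \ref{prop-le2}), and an extended {\L}ojasiewicz--Simon inequality (Theorem \ref{LSmain}) in an iterative continuation argument whose key feature is that the perturbation of $\varphi$ from $\varphi_*$ in $H^2$ — and hence the strict separation from $\pm1$ — is kept uniformly small for all time. The first step is to fix, via Theorem \ref{ls}, a unique local strong solution on some interval $[0,T_2]$ with $T_2>0$ depending only on (a bound for) the size of the initial data; along $[0,T_2]$ one has explicit finite growth estimates for $\|\bm{v}\|_{\bm{H}^1}$, $\|\varphi\|_{H^3}$, $\|\sigma\|_{H^1}$, and in particular $\|\varphi\|_{C(\overline{\Omega})}$ stays below $1-\delta_*/2$, say, provided $\eta_1,\eta_2,\eta_3$ are small. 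Next, using the energy balance \eqref{BEL} together with the mass relation \eqref{mph2} (note $m_1=c_0$, so $\overline\varphi\equiv c_0$ and the right-hand side of \eqref{BEL} vanishes), the total energy $\tfrac12\|\bm v\|^2 + \mathcal F(\varphi,\sigma)$ is nonincreasing; combined with the smallness of the initial data and continuity of $\mathcal F$ at $(\varphi_*,\sigma_*)$ (which requires the $H^2$-regularity and the separation property from Proposition \ref{prop-le2} to make $\varPsi(\varphi_0)$ close to $\varPsi(\varphi_*)$), one concludes that $\mathcal F(\varphi(t),\sigma(t)) - \mathcal F(\varphi_*,\sigma_*)$ is small and, being monotone, varies only a little on $[0,T_2]$.

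The crucial step is then to convert this small energy variation into a smallness of $\|\varphi(t)-\varphi_*\|_{H^2}$ and $\|\sigma(t)-\sigma_*\|$. Here I would argue by contradiction using the {\L}ojasiewicz--Simon inequality of Theorem \ref{LSmain}: if at some first time the $H^2$-distance reached a prescribed threshold $\epsilon$, then, by the LS inequality applied around the minimizer $(\varphi_*,\sigma_*)$ (valid as long as $\varphi$ stays strictly separated, which holds up to that time), the energy gap would control a power of a lower-order norm $\|\varphi-\varphi_*\|_{(H^1)'}+\|\sigma-\sigma_*\|_{(H^1)'}$, and integrating the resulting differential inequality for the energy (as in the classical {\L}ojasiewicz--Simon argument) forces these weak norms to remain small. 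Interpolating the weak norms against the (possibly large but controlled-on-$[0,T_2]$) $H^3\times H^1$ norms yields smallness of $\|\varphi-\varphi_*\|_{H^2}$ and $\|\sigma-\sigma_*\|$, contradicting the assumption that the threshold $\epsilon$ was attained. By the Sobolev embedding $H^2\hookrightarrow C(\overline\Omega)$ this also keeps $\|\varphi-\varphi_*\|_{C(\overline\Omega)}$ small, so $\varphi$ stays strictly separated: $|\varphi|\le 1-\delta$ on $[0,T_2]$ with the same $\delta$ as controls the initial data.

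Finally, one runs the iteration. Using the dissipative structure (the energy dissipation integral is finite), one selects $t^*\in[T_2/2,T_2]$ at which $\|\bm v(t^*)\|$, $\|\nabla\mu(t^*)\|$, $\|\nabla(\sigma-\chi\varphi)(t^*)\|$ are controlled, takes $(\bm v(t^*),\varphi(t^*),\sigma(t^*))$ as new initial datum — which by the previous step is again a small perturbation of $(\bm 0,\varphi_*,\sigma_*)$ in $\bm L^2\times H^2\times L^2$ with $\varphi(t^*)$ strictly separated — and invokes Theorem \ref{ls} on $[t^*,t^*+T_2]$. Uniqueness glues the two pieces, giving a strong solution on $[0,3T_2/2]$ satisfying the same a priori bounds; iterating with fixed step $T_2/2$ extends the solution to $[0,+\infty)$ with uniform bounds, which yields \eqref{glo-sep} and \eqref{glo-lya}. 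Lyapunov stability of any energy minimizer is then immediate from \eqref{glo-lya}. The main obstacle, as always in this approach, is the interplay between the loss of a uniform separation estimate for the three-dimensional evolution problem and the LS inequality, which is only available \emph{at} strictly separated configurations: one must propagate separation and smallness simultaneously, and the contradiction argument converting energy decay into $H^2$-closeness — including getting the quantitative constants to close the loop independently of the iteration index — is the delicate part that drives the choice of $\eta_1,\eta_2,\eta_3$. The higher-order norms $\|\bm v\|_{\bm H^1}$, $\|\varphi\|_{H^3}$, $\|\sigma\|_{H^1}$ themselves need not stay small; it suffices that their growth on each step of length $T_2/2$ be controlled by the (time-uniform) lower-order bounds furnished by the Lyapunov functional, so that the interpolation in the crucial step remains effective at every iteration.
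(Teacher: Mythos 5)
Your overall strategy coincides with the paper's proof: a local strong solution on a fixed interval $[0,T_2]$ with controlled growth of the higher norms, a contradiction argument based on the extended {\L}ojasiewicz--Simon inequality that converts the small energy variation into smallness of $\|\varphi-\varphi_*\|_{(H^1)'}+\|\sigma-\sigma_*\|_{(H^1)'}$, interpolation against the $H^3\times H^1$ bounds to recover $H^2\times L^2$ closeness and hence strict separation via $H^2\hookrightarrow C(\overline\Omega)$, selection of $t^*\in[T_2/2,T_2]$ through the finiteness of the dissipation integral, and iteration with fixed step $T_2/2$ using uniqueness to glue. This is exactly the scheme carried out in Sections 5.1--5.3.

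There is, however, one concrete error in your argument: the parenthetical claim that $m_1=c_0$ forces $\overline\varphi\equiv c_0$ and makes the right-hand side of \eqref{BEL} vanish. The constraint $m_1=c_0$ concerns the mean of the \emph{minimizer} $\varphi_*$; the theorem imposes no mean constraint on $\varphi_0$, only $\|\varphi_0-\varphi_*\|_{H^2}\le\eta_2$, so in general $\overline{\varphi_0}\neq c_0$ and, by \eqref{mph2}, $\overline\varphi(t)=c_0+e^{-\alpha t}(\overline{\varphi_0}-c_0)$ is not identically $c_0$. Consequently the production term $-\alpha(\overline\varphi-c_0)\int_\Omega\mu\,\mathrm{d}x$ in \eqref{BEL} does not vanish and the plain total energy is not monotone; your {\L}ojasiewicz--Simon differential inequality, which integrates a nonincreasing energy gap, does not close as stated. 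The paper repairs exactly this: it estimates $\left|\int_\Omega\mu\,\mathrm{d}x\right|$ in terms of the dissipation and works with the modified Lyapunov functional $\widehat{\mathcal E}(t)=\mathcal E(t)+C e^{-\alpha t}|\overline{\varphi_0}-c_0|$ (cf. \eqref{BEL5} and \eqref{v1}), and the {\L}ojasiewicz--Simon inequality of Theorem \ref{LSmain} is proved with the additional term $C|\overline\varphi-m_1|^{1-\kappa}$ precisely so that it applies to trajectories whose mean drifts toward $c_0$. Moreover, to know that the energy gap $\widehat{\mathcal E}(t)-\mathcal F(\varphi_*,\sigma_*)$ is nonnegative on the interval where the argument runs, one must invoke the local-minimizer property applied to the shifted function $\widetilde\varphi=\varphi-\overline\varphi+c_0\in\mathcal Z_{c_0,m_2}$, a point your proposal leaves implicit. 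With these corrections (which do not alter the architecture of your argument) the proof goes through as in the paper; without them, the step converting energy decay into $H^2$-closeness fails in the off-critical case $\overline{\varphi_0}\neq c_0$, which the theorem is required to cover.
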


Our second result characterizes the long-time behavior of the  global strong solution $(\bm{v},\varphi,\sigma)$.

\begin{theorem}[Long-time behavior]\label{3main1}
Suppose that the assumptions of Theorem \ref{3main} are satisfied. Let $(\bm{v},\varphi,\sigma)$ be a global strong solution obtained in Theorem \ref{3main}. Then the following results hold:

(1) Convergence to a single equilibrium: %
	\begin{equation}
	\lim_{t\to +\infty} \big(\|\bm{v}(t)\|_{\bm{H}^1} +\|\varphi(t)-\varphi_\infty\|_{H^{3}} +\|\sigma(t)-\sigma_\infty\|_{H^{1}}\big) =0.\label{convv}
	\end{equation}
Here, $(\varphi_{\infty},\sigma_{\infty})\in \big(H^3(\Omega)\cap H^2_N(\Omega)\big)\times H^2_N(\Omega)$ is a solution to the stationary problem such that
	\begin{subequations}
		\begin{alignat}{3}
		&-\Delta \varphi_\infty +\varPsi^{\prime}(\varphi_\infty) -\chi\sigma_\infty +\beta\mathcal{N}(\varphi_\infty-c_0) =\overline{\varPsi^{\prime}(\varphi_\infty)}
		-\chi\overline{\sigma_\infty},\quad
		&\ \text{in } \Omega,   \label{5bchv}\\
		& \Delta (\sigma_\infty-\chi\varphi_\infty)=0,
		&\ \text{in } \Omega,   \label{5f2.b}  \\
		&\partial_{\bm{n}} \varphi_\infty = \partial_{\bm{n}} \sigma_\infty=0,
		&\ \text{on } \partial \Omega, \label{5cchv}\\
		&\text{subject to the constraints}\quad \overline{\varphi_\infty}=c_0, \quad   \overline{\sigma_\infty}=\overline{\sigma_0}.
		& \label{5dchv}
		\end{alignat}
	\end{subequations}

(2) Estimate on the convergence rate: there exists some constant  $\kappa\in(0,1/2)$ depending on $(\varphi_{\infty},\sigma_{\infty})$ such that
\begin{align}
	\|\bm{v}(t)\|+ \|\varphi(t)-\varphi_\infty\|_{H^1} +\|\sigma(t)-\sigma_\infty\| \le C(1+t)^{-\frac{\kappa}{1-2 \kappa}},\quad \forall\,t\geq 0,
\label{con-rate}
\end{align}
where $C>0$ depends on $\|\bm{v}_0\|_{\bm{H}^1}$, $\|\varphi_0\|_{H^3}$, $\|\varphi_\infty\|_{H^2}$, $\|\sigma_0\|_{H^1}$, $\|\sigma_\infty\|_{H^1}$, $\delta_*$, coefficients of the system and $\Omega$.
\end{theorem}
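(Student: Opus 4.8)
\textbf{Proof proposal for Theorem \ref{3main1}.}

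The plan is to combine the dissipative structure of the system with the {\L}ojasiewicz--Simon machinery developed for the stationary problem (Theorem \ref{LSmain}), in the spirit of \cite{A2009,ZWH,GGW,Zhou20}. First I would record the basic consequences of Theorem \ref{3main}: the solution $(\bm{v},\varphi,\sigma)$ is uniformly bounded in $\bm{H}^1_{0,\mathrm{div}}(\Omega)\times H^3(\Omega)\times H^1(\Omega)$, satisfies the strict separation \eqref{glo-sep} uniformly in time, and obeys the energy identity \eqref{BEL} together with the mass relation \eqref{mph2} with $\overline{\varphi_0}=c_0$, so in fact $\overline{\varphi}(t)\equiv c_0$ and the energy production term on the right-hand side of \eqref{BEL} vanishes; hence the total energy $E(t)=\tfrac12\|\bm{v}(t)\|^2+\mathcal{F}(\varphi(t),\sigma(t))$ is nonincreasing and dissipates the quantity $\int_\Omega(2\nu(\varphi)|D\bm{v}|^2+|\nabla\mu|^2+|\nabla(\sigma-\chi\varphi)|^2)\,\mathrm{d}x$. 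Since $E$ is bounded below (using (H2) and the separation), $E(t)\downarrow E_\infty$ and the dissipation integral is finite over $(0,\infty)$; this yields a sequence $t_n\to\infty$ along which $\|\bm{v}(t_n)\|_{\bm{H}^1}\to 0$, $\|\nabla\mu(t_n)\|\to 0$, $\|\nabla(\sigma-\chi\varphi)(t_n)\|\to 0$. By the uniform bounds and compact Sobolev embeddings, a subsequence of $(\varphi(t_n),\sigma(t_n))$ converges in $H^2\times H^1$ to some $(\varphi_\infty,\sigma_\infty)$, which by passing to the limit in the equations and using $\overline{\varphi(t_n)}=c_0$, $\overline{\sigma(t_n)}=\overline{\sigma_0}$ is seen to solve the stationary system \eqref{5bchv}--\eqref{5dchv}; elliptic regularity (bootstrapping with $\partial\Omega\in C^4$) then upgrades $(\varphi_\infty,\sigma_\infty)$ to the asserted class, and the separation passes to the limit so Theorem \ref{LSmain} applies at this steady state.

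Next I would promote this subsequential convergence to full convergence via the {\L}ojasiewicz--Simon argument. Define the ``reduced'' energy $\mathcal{G}(\varphi,\sigma)=\mathcal{F}(\varphi,\sigma)$ restricted to the affine constraint $\overline{\varphi}=c_0$, $\overline{\sigma}=\overline{\sigma_0}$; Theorem \ref{LSmain} provides constants $\kappa\in(0,1/2)$ and $C_{\mathrm{LS}}>0$, and a neighbourhood of $(\varphi_\infty,\sigma_\infty)$ in (say) $H^2\times L^2$ on which
\[
|\mathcal{F}(\varphi,\sigma)-\mathcal{F}(\varphi_\infty,\sigma_\infty)|^{1-\kappa}\le C_{\mathrm{LS}}\,\big\|\big(\Pi_1 D_\varphi\mathcal{F},\,\Pi_2 D_\sigma\mathcal{F}\big)\big\|_{(H^1)'\times L^2},
\]
where $\Pi_i$ denote the appropriate mean-zero projections and the first component of the gradient is $-\Delta\varphi+\varPsi'(\varphi)-\chi\sigma+\beta\mathcal{N}(\varphi-c_0)$ minus its mean, the second is $\sigma-\chi\varphi$ minus its mean. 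The key identification is that these gradient components are controlled by the dissipation: indeed $\mu-\overline{\mu}$ equals exactly the first gradient component, and $\sigma-\chi\varphi-\overline{\sigma-\chi\varphi}$ the second, so $\|\nabla\mu\|+\|\nabla(\sigma-\chi\varphi)\|$ bounds the $(H^1)'$-type norm of the gradient from above (using Poincar\'e--Wirtinger). Then along the flow, writing $H(t)=(E(t)-E_\infty)^{\kappa}$ (with $E_\infty=\mathcal{F}(\varphi_\infty,\sigma_\infty)$ since the kinetic part vanishes in the limit), one differentiates to get $-\tfrac{\mathrm d}{\mathrm dt}H(t)\ge c\,\mathcal{D}(t)/(E(t)-E_\infty)^{1-\kappa}\ge c'\big(\|\bm{v}\|_{\bm{H}^1}+\|\nabla\mu\|+\|\nabla(\sigma-\chi\varphi)\|\big)$, where $\mathcal{D}(t)$ is the dissipation and the last inequality uses the {\L}ojasiewicz inequality together with a lower bound of the dissipation by the gradient norm (for the velocity one uses that the capillary force is $L^2$-bounded, so $\|\bm{v}\|_{\bm{H}^1}$ is controlled by $\|\nabla\bm{v}\|\le C\|D\bm{v}\|$ plus the coupling). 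The standard continuation argument — assuming the solution stays in the {\L}ojasiewicz neighbourhood on a maximal interval, integrating the above differential inequality to bound $\int_{t_0}^\infty(\|\bm{v}\|_{\bm{H}^1}+\|\partial_t\varphi\|_{(H^1)'}+\|\partial_t\sigma\|_{(H^1)'})\,\mathrm{d}t<\infty$, and using this with the subsequential limit to show the solution cannot escape — then forces $(\bm{v}(t),\varphi(t),\sigma(t))\to(\bm{0},\varphi_\infty,\sigma_\infty)$ in $\bm{L}^2\times H^1\times L^2$ (or $H^2$ for $\varphi$) with convergence of the whole trajectory. Upgrading to the $\bm{H}^1\times H^3\times H^1$ convergence in \eqref{convv} then follows from the higher-order a priori estimates of Theorem \ref{3main} combined with a compactness/interpolation argument: any sequence $\varphi(t_k)$ is precompact in $H^3$ by the uniform $H^4_{\mathrm{loc}}$-in-time bound plus parabolic smoothing, and the only limit point is $\varphi_\infty$.

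Finally, for the convergence rate \eqref{con-rate}, I would follow the scheme of \cite{HJ2001,W07,ZWH}: from the differential inequality for $H(t)=(E(t)-E_\infty)^\kappa$ and the {\L}ojasiewicz inequality one derives $\tfrac{\mathrm d}{\mathrm dt}(E-E_\infty)\le -c(E-E_\infty)^{2(1-\kappa)}$, whose integration gives the polynomial decay $E(t)-E_\infty\le C(1+t)^{-1/(1-2\kappa)}$; then, integrating $\int_t^\infty(\|\bm{v}\|_{\bm{H}^1}+\|\partial_t\varphi\|_{(H^1)'}+\|\partial_t\sigma\|_{(H^1)'})\,\mathrm{d}s\le C(E(t)-E_\infty)^\kappa\le C(1+t)^{-\kappa/(1-2\kappa)}$ bounds $\|\varphi(t)-\varphi_\infty\|_{(H^1)'}+\|\sigma(t)-\sigma_\infty\|_{(H^1)'}$ by the same rate, and a final energy estimate (testing the evolution equations against $\varphi-\varphi_\infty$, $\sigma-\sigma_\infty$, $\bm{v}$ and exploiting the strict convexity-type coercivity coming from $\varPsi_0''\ge\theta$ on the separated range, modulo the indefinite $-\tfrac{\theta_0}{2}r^2$ term which is absorbed by interpolation) upgrades this to the $\bm{L}^2\times H^1\times L^2$ rate stated in \eqref{con-rate}. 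The main obstacle I anticipate is twofold: first, establishing that the solution remains in the {\L}ojasiewicz neighbourhood for all large time — this is delicate because the {\L}ojasiewicz inequality of Theorem \ref{LSmain} is stated in the low-order $H^2\times L^2$ topology while the natural dissipation controls gradients, so one must carefully interleave the {\L}ojasiewicz estimate with the parabolic smoothing/interpolation to close the loop without circularity; second, handling the velocity field within this framework, since $\bm{v}$ has no associated {\L}ojasiewicz structure of its own and must be slaved to the phase/chemical dynamics through the coupling terms $(\mu+\chi\sigma)\nabla\varphi$ and the viscous dissipation — one needs the dissipation $\int 2\nu(\varphi)|D\bm{v}|^2$ together with Korn's and Poincar\'e's inequalities to recover enough control on $\|\bm{v}\|_{\bm{H}^1}$, and a careful treatment of the pressure via Lemma \ref{stokes}.
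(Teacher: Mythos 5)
Your overall strategy (finite dissipation $\Rightarrow$ subsequential convergence to a steady state, then the {\L}ojasiewicz--Simon inequality of Theorem \ref{LSmain} to upgrade to convergence of the whole trajectory, then energy decay plus a final energy estimate for the rate) is the same as the paper's. However, there is a genuine gap at the very start: you assert that the hypotheses force $\overline{\varphi_0}=c_0$, hence $\overline{\varphi}(t)\equiv c_0$ and the production term on the right-hand side of \eqref{BEL} vanishes, so that the total energy is monotone. Theorem \ref{3main} only assumes that the \emph{minimizer} $\varphi_*$ has mean $c_0$ (i.e. $m_1=c_0$) and that $\|\varphi_0-\varphi_*\|_{H^2}\le\eta_2$; it does \emph{not} impose $\overline{\varphi_0}=c_0$, and for $\alpha>0$ one only has $\overline{\varphi}(t)=c_0+e^{-\alpha t}(\overline{\varphi_0}-c_0)$. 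Consequently the term $-\alpha(\overline{\varphi}-c_0)\int_\Omega\mu\,\mathrm{d}x$ does not vanish and $\mathcal{E}(t)$ is not a Lyapunov functional as such. The paper handles precisely this off-critical case by working with the corrected functional $\widehat{\mathcal{E}}(t)=\mathcal{E}(t)+(2L_2+L_{10})e^{-\alpha t}|\overline{\varphi_0}-c_0|$ (see \eqref{BEL5}, \eqref{v1}) and by the extra term $C|\overline{\varphi}-c_0|^{1-\kappa}$ built into the {\L}ojasiewicz--Simon inequality \eqref{lojasb}/\eqref{lojasinf}, absorbing the production through the exponential decay of the mean. Your argument, as written, only covers the special case $\alpha=0$ or $\overline{\varphi_0}=c_0$ and would need this machinery to prove the stated theorem.

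A secondary weak point concerns how you obtain the full (not merely sequential) decay of $\|\bm{v}(t)\|_{\bm{H}^1}$ and $\|\nabla\mu(t)\|$, which is needed for \eqref{convv}: the paper gets $\widetilde{\Lambda}(t)\to 0$ from the uniform integrability \eqref{inf} combined with the differential inequality \eqref{aab}, and then deduces the $H^3$-convergence of $\varphi$ and the $H^1$-convergence of $\sigma$ from the equation for $\mu$, the constancy of $\sigma_\infty-\chi\varphi_\infty$ and elliptic estimates (see \eqref{s4}--\eqref{s6}). Your proposal instead obtains only $\int_{t_0}^{\infty}\|\bm{v}\|_{\bm{H}^1}\,\mathrm{d}t<\infty$, which by itself does not give $\|\bm{v}(t)\|_{\bm{H}^1}\to 0$ without an additional uniform-continuity (Barbalat-type) argument that you do not supply, and your claim that $\{\varphi(t_k)\}$ is precompact in $H^3$ does not follow from the stated bounds: a uniform $C([0,\infty);H^3)$ bound yields compactness only in $H^s$ for $s<3$, and the $L^2_{\mathrm{loc}}(H^4)$ bound does not control $\varphi$ in $H^4$ at the given times. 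These steps are fixable, but as written they leave the topology of the convergence in \eqref{convv} unproved.
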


\begin{remark}\label{rem:eq}
Theorem \ref{3main} yields the Lyapunov stability for every local energy minimizer of the free energy $\mathcal{F}$. Theorem \ref{3main1} further implies that for any global strong solution $(\bm{v}, \varphi, \sigma)$ obtained in Theorem \ref{3main}, $(\varphi, \sigma)$ not only stays close to the given local energy minimizer $(\varphi_*, \sigma_*)$, but also converges to an equilibrium $(\varphi_\infty, \sigma_\infty)$ as $t\to+\infty$. In general, $(\varphi_*, \sigma_*)$ and $(\varphi_\infty, \sigma_\infty)$ do not coincide. Nevertheless, if one can show that $(\varphi_*, \sigma_*)$ is an isolated minimizer for $\mathcal{F}$, then $(\varphi_\infty, \sigma_\infty)=(\varphi_*, \sigma_*)$, namely, the minimizer $(\varphi_*, \sigma_*)$ is (locally) asymptotically stable.
\end{remark}

 \section{Local Well-posedness}\label{gss}
 \setcounter{equation}{0}
 In this section, we establish the existence and uniqueness of a local strong solution to problem \eqref{f3.c}--\eqref{ini0} subject to arbitrarily large and sufficiently regular initial data.
 \begin{theorem}[Local well-posedness] \label{ls}  Let the hypotheses (H1), (H2) and (H4) be satisfied.

 (1) For any initial data $\left(\boldsymbol{v}_{0}, \varphi_{0},\sigma_0\right)$ satisfying
 $\boldsymbol{v}_{0} \in \bm{H}^1_{0,\mathrm{div}}(\Omega)$, $\varphi_{0} \in H^{2}_N(\Omega)$, $\|\varphi_0\|_{L^\infty}\leq 1$, $|\overline{\varphi_0}|<1$, $\mu_0=-\Delta \varphi_0+ \varPsi'(\varphi_0)\in H^1(\Omega)$ and $\sigma_{0} \in H^{1}(\Omega)$, there exists some $T_0\in (0,+\infty)$, depending on $\left\|\boldsymbol{v}_{0}\right\|_{\boldsymbol{H}^{1}}$,
 $\|\varphi_0\|_{H^1}$, $\left\|\mu_0\right\|_{H^{1}}$,  $\left\|\sigma_{0}\right\|_{H^{1}}$, $\max_{r\in[-1,1]}|\varPsi(r)|$, coefficients of the system and $\Omega$, such that problem \eqref{f3.c}--\eqref{ini0} admits a unique local strong solution $(\bm{v},\varphi,\mu,\sigma,p)$ on $[0,T_0]$,  satisfying the following regularity properties
 \begin{align*}
 &\boldsymbol{v} \in C\big([0,T_0] ;\bm{H}^1_{0,\mathrm{div}}(\Omega) \big) \cap L^{2}\big(0,T_0 ; \bm{H}^2(\Omega)\big) \cap H^{1}\big(0,T_0 ; \bm{L}^2_{0,\mathrm{div}}(\Omega)\big), \\
 &\varphi \in L^\infty\big(0,T_0 ; W^{2,6}(\Omega)\big),\quad \partial_t \varphi \in L^{\infty}\big(0,T_0 ; (H^{1}(\Omega))'\big) \cap L^2\big(0,T_0 ; H^{1}(\Omega)\big), \\
 & \varphi\in L^\infty(\Omega\times(0,T_0))\ \ \text{such that}\ \  |\varphi(x,t)|<1\ \ \text{a.e. in}\ \ \Omega \times(0,T_0),\\
 &\mu \in L^\infty\big(0,T_0; H^{1}(\Omega)\big) \cap L^{2}\big(0,T_0 ; H^{3}(\Omega)\big), \\
 	&\sigma \in C\big([0,T_0] ; H^{1}(\Omega)\big) \cap L^{2}\big(0,T_0 ; H^{2}(\Omega)\big) \cap H^{1}\big(0,T_0 ; L^{2}(\Omega)\big), \\
 &p \in L^{2}\big(0,T_0 ; V_0\big).
 \end{align*}
 Besides, the solution $(\bm{v},\varphi,\mu,\sigma,p)$ fulfills the system \eqref{f3.c}--\eqref{f2.b} almost everywhere in $\Omega\times (0,T_0)$, the boundary conditions \eqref{boundary} almost everywhere on $\partial\Omega\times (0,T_0)$ and the initial conditions \eqref{ini0} almost everywhere in $\Omega$.

 (2) Assume in addition, $\left\|\varphi_{0}\right\|_{C(\overline{\Omega})} = 1-\delta_{0}$ for some given $\delta_{0} \in(0,1)$. The local strong solution obtained in (1) satisfies the further regularity
 \begin{align*}
 &\varphi \in C\big([0,T^{*}] ; H^{3}(\Omega)\big) \cap L^{2}\big(0,T^{*} ; H^{4}(\Omega)\big), \\
 &\mu \in C\big([0,T^{*}]; H^{1}(\Omega)\big) \cap H^{1}\big(0,T^{*} ;(H^{1}(\Omega))^{\prime}\big),
 \end{align*}
 and the strict separation property
 \be
 \|\varphi(t)\|_{C(\overline{\Omega})} \leq 1-\frac{1}{2} \delta_{0}, \quad \forall\, t \in [0, T^{*}],
 \ee
 for some time $T^{*} \in(0,T_0)$ depending on $\left\|\boldsymbol{v}_{0}\right\|_{\boldsymbol{H}^{1}}$,
 $\|\varphi_0\|_{H^1}$, $\left\|\mu_0\right\|_{H^{1}}$,  $\left\|\sigma_{0}\right\|_{H^{1}}$,  $\max_{r\in[-1,1]}|\varPsi(r)|$, coefficients of the system, $\Omega$ and $\delta_{0}$.
 \end{theorem}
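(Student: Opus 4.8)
The plan is to construct the local strong solution via a \emph{semi-Galerkin} scheme combined with a \emph{viscous regularization} of the Cahn--Hilliard equation, closely following the approach in \cite{H,MZ04,Gio2022}. At the first level, we introduce two small parameters: a Galerkin truncation index $m$ for the Navier--Stokes equations (projecting \eqref{f3.c} onto the span of the first $m$ eigenfunctions of the Stokes operator $\bm{S}$), and a viscosity parameter $\eta>0$ that replaces \eqref{f1.a}--\eqref{f4.d} by the viscous convective Cahn--Hilliard system $\partial_t\varphi+\bm{v}^m\cdot\nabla\varphi=\Delta\mu-\alpha(\overline{\varphi}-c_0)$, $\mu=-\Delta\varphi+\varPsi'(\varphi)+\eta\partial_t\varphi-\chi\sigma+\beta\mathcal{N}(\varphi-\overline{\varphi})$, coupled with the advection-diffusion equation \eqref{f2.b} for $\sigma$. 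One also regularizes the initial datum $(\varphi_0,\sigma_0)$ (e.g.\ by a short Cahn--Hilliard flow) so that compatibility conditions are met. For fixed $\bm{v}^m$ (which is smooth in space, being a finite linear combination of eigenfunctions), the $(\varphi,\sigma)$-subsystem is solved by its own Galerkin/fixed-point argument; here the viscous term $\eta\partial_t\varphi$ is crucial, because it provides, via the elliptic estimate and the bound $\varPsi_0'(\varphi)\in L^\infty$ type control, a \emph{strict separation} $|\varphi|\le 1-\delta(\eta)$ on a short time interval and hence $H^3$ (even $H^4$) spatial regularity of $\varphi$ (cf. \cite{H2,MZ04}). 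The singular potential guarantees $\varphi\in[-1,1]$ pointwise, which in turn makes the chemotaxis term $\chi\sigma\nabla\varphi$ and the coupling in \eqref{f4.d} harmless and removes the technical coefficient restriction of \cite{LW}.

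The core of the argument is a set of \emph{uniform-in-$(m,\eta)$ a priori estimates} on a time interval $[0,T_0]$ whose length depends only on the data norms listed in the statement. First, testing the Navier--Stokes equations with $\bm{v}^m$, \eqref{f1.a} with $\mu$, \eqref{f4.d} with $\partial_t\varphi$, and \eqref{f2.b} with $\sigma-\chi\varphi$, and summing, one recovers the energy balance \eqref{BEL} (plus an $\eta\|\partial_t\varphi\|^2$ term); together with the mass identity \eqref{mph2} and Gronwall this yields uniform bounds on $\bm{v}^m$ in $L^\infty_t\bm{L}^2\cap L^2_t\bm{H}^1_{0,\mathrm{div}}$, on $\varphi$ in $L^\infty_tH^1$, on $\sigma$ in $L^\infty_tL^2\cap L^2_tH^1$, and on $\mu$ in $L^2_tH^1$ after controlling $\overline{\mu}$ via the separation-based bound on $\varPsi'(\varphi)$ in $L^1$. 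Next, the higher-order estimates: one tests \eqref{f3.c} with $\bm{S}\bm{v}^m$ (using Lemma \ref{stokes}, Korn, Ladyzhenskaya and Agmon, and the $L^\infty$-bound on $\nu,\nu',\nu''$ from (H1)) to get $\bm{v}^m$ in $L^\infty_t\bm{H}^1_{0,\mathrm{div}}\cap L^2_t\bm{H}^2$ and $\partial_t\bm{v}^m$ in $L^2_t\bm{L}^2_{0,\mathrm{div}}$; one differentiates the Cahn--Hilliard part, or more cleanly tests the evolution of $\mu$ suitably, to bound $\mu$ in $L^\infty_tH^1\cap L^2_tH^3$, $\partial_t\varphi$ in $L^\infty_t(H^1)'\cap L^2_tH^1$, and $\varphi$ in $L^\infty_tW^{2,6}$ (via $-\Delta\varphi=\mu-\varPsi'(\varphi)+\chi\sigma-\beta\mathcal{N}(\varphi-\overline{\varphi})$ and elliptic regularity, exploiting $\varPsi'(\varphi)\in L^6$ from separation and $H^2\hookrightarrow L^\infty$); and one tests \eqref{f2.b} appropriately to get $\sigma$ in $L^\infty_tH^1\cap L^2_tH^2\cap H^1_tL^2$. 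Because all these bounds are uniform in $m$ and $\eta$, and $T_0$ depends only on the listed quantities, we can pass to the limit $m\to\infty$ then $\eta\to0^+$ using Aubin--Lions--Simon compactness; the pointwise constraint $|\varphi|\le1$ passes to the limit, and one shows $|\varphi|<1$ a.e.\ by an argument using that $\varPsi_0'(\varphi)\in L^2(\Omega\times(0,T_0))$. The pressure $p$ is recovered from the limit momentum equation by De Rham's theorem, with $p\in L^2_tV_0$ by Lemma \ref{stokes}.

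For part (2), one uses the \emph{additional} assumption $\|\varphi_0\|_{C(\overline\Omega)}=1-\delta_0$ together with $\mu_0\in H^1$: since initially $\varphi_0$ is strictly separated, a continuity/bootstrap argument — bounding $\partial_t\varphi$ in $L^2_t(H^1)'$ and hence, by interpolation with the $W^{2,6}$-bound, controlling the modulus of continuity of $t\mapsto\varphi(t)$ in $C(\overline\Omega)$ — shows that the separation $\|\varphi(t)\|_{C(\overline\Omega)}\le1-\tfrac12\delta_0$ persists on a (possibly shorter) interval $[0,T^*]$ with $T^*$ depending additionally on $\delta_0$. On that interval $\varPsi$ is a genuine $C^3$ function of $\varphi$ on the compact set $[-(1-\tfrac12\delta_0),1-\tfrac12\delta_0]$, so differentiating the Cahn--Hilliard equation in time and testing (with $\partial_t\mu$ against $\mathcal{N}\partial_t\varphi$, say) yields $\partial_t\mu\in L^2_t(H^1)'$, $\mu\in C_tH^1$, and then elliptic regularity applied to $-\Delta\varphi=\mu-\varPsi'(\varphi)+\chi\sigma-\beta\mathcal{N}(\cdots)$ upgrades $\varphi$ to $C_tH^3\cap L^2_tH^4$ (here the $C^4$-regularity of $\partial\Omega$ from (H4) is used). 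Finally, \emph{uniqueness} follows from a standard energy estimate on the difference of two solutions: writing the equations for $(\bm{v}_1-\bm{v}_2,\varphi_1-\varphi_2,\sigma_1-\sigma_2)$, testing in the $\bm{L}^2_{0,\mathrm{div}}\times(H^1)'\times L^2$ framework (using $\mathcal{N}$ and $\bm{S}^{-1}$), and invoking the Lipschitz continuity of $\varPsi'$ on the separation interval together with the strong-solution regularity already obtained, one closes via Gronwall. The main obstacle is the higher-order a priori estimate for $(\varphi,\mu)$ that is \emph{uniform in the viscosity parameter} $\eta$: the viscous term is what buys the separation (and thus the regularity of $\varPsi'(\varphi)$) at the approximate level, yet the resulting bounds must not degenerate as $\eta\to0^+$; this forces one to extract all $\eta$-independent information — chiefly $L^\infty_tH^1\cap L^2_tH^3$ for $\mu$ and $L^\infty_tW^{2,6}$ for $\varphi$ — purely from the energy law, the mass relation, and the Navier--Stokes regularity, rather than from the viscous regularization itself.
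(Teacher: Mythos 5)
Your strategy is essentially the paper's: a semi-Galerkin scheme (Galerkin only for the velocity), a viscous regularization $\gamma\partial_t\varphi$ in the chemical potential, and a regularization of $(\varphi_0,\sigma_0)$; then the same battery of uniform-in-$(m,\gamma)$ estimates (lower-order energy identity from testing with $\bm{v}^m$, $\mu$, $\partial_t\varphi$, $\sigma-\chi\varphi$; higher-order estimates from $\bm{S}\bm{v}^m$, $\partial_t\bm{v}^m$, $\partial_t\mu$, and $-\Delta(\sigma-\chi\varphi)$; control of $\overline{\mu}$ through $\|\varPsi_0'(\varphi)\|_{L^1}$), limit passage by compactness, pressure by de Rham, and for part (2) the same persistence-of-separation argument via the H\"older-in-time modulus of continuity of $\varphi$ in $C(\overline\Omega)$ (interpolation of $\partial_t\varphi\in L^2_tH^1$ against the $H^2/H^3$ bounds) followed by elliptic bootstrapping. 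The only inessential deviation is the regularization of the initial datum: the paper truncates $\widetilde\mu_0=-\Delta\varphi_0+\varPsi_0'(\varphi_0)$ and solves a nonlinear elliptic problem (so that $\varphi_{0,k}$ is separated and smooth), rather than running a short Cahn--Hilliard flow; either device works provided it preserves $\|\varphi_{0,k}\|_{H^1}$, the mean, and gives the compatibility needed for the estimate of $\widetilde\Lambda_m(0)$.

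There is, however, one concrete flaw in your uniqueness step. You propose to close the Gronwall argument by ``invoking the Lipschitz continuity of $\varPsi'$ on the separation interval,'' but in part (1) the solution is only known to satisfy $|\varphi|<1$ a.e.\ on $[0,T_0]$; no uniform strict separation from $\pm1$ is available there (this is precisely what is open in 3D), so $\varPsi'$ is not Lipschitz on the range of $\varphi$ and the difference term $\varPsi'(\varphi_1)-\varPsi'(\varphi_2)$ cannot be estimated that way. The correct route — the one the paper takes, following \cite[Section 4]{H} — is to work with the dual norms $\|\nabla\bm{S}^{-1}\bm{v}\|$, $\|\varphi\|_{(H^1)'}$, $\|\sigma\|_{(H^1)'}$ (plus $|\overline{\varphi}|$) and exploit the splitting $\varPsi=\varPsi_0-\tfrac{\theta_0}{2}r^2$ of (H2): testing the difference of the Cahn--Hilliard equations against $\mathcal{N}$ of the difference, the monotonicity of $\varPsi_0'$ gives $\big(\varPsi_0'(\varphi_1)-\varPsi_0'(\varphi_2),\varphi_1-\varphi_2\big)\ge\theta\|\varphi_1-\varphi_2\|^2$, which carries the good sign, while the quadratic part and the convective/capillary/chemotaxis couplings are absorbed using the higher-order regularity of the two strong solutions. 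With this modification (and Gronwall) one obtains the continuous-dependence estimate \eqref{uniA1} and hence uniqueness; Lipschitz continuity of $\varPsi'$ is legitimately used only in part (2), on $[0,T^*]$, where the strict separation $\|\varphi(t)\|_{C(\overline\Omega)}\le 1-\tfrac12\delta_0$ has already been established.
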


\subsection{The approximate problem}\label{app-ini}
The existence part of Theorem \ref{ls} can be proved by using a semi-Galerkin scheme similar to that in \cite[Section 3]{H} for global weak solutions to a Navier--Stokes--Cahn--Hilliard system with
chemotaxis and singular potential. However, in order to derive higher-order Sobolev estimates for the approximate solutions in three dimensions, we need to adopt proper regularizations for the initial datum $(\varphi_0,\sigma_0)$ (cf. \cite{GMT,Gio2022}) and a viscous regularization for the Cahn--Hilliard equation (cf. \cite{Gio2022,H2}). When the spatial dimension is two, thanks to a global-in-time strict separation property for the phase-field function $\varphi$, the above mentioned regularizations on the initial datum and the equation can be skipped. We refer to \cite{H1} for further discussions in this aspect.

\medskip

  \textbf{Regularization of the initial datum}. Define
  $$
  \widetilde{\mu}_0=-\Delta \varphi_0+ \varPsi_0'(\varphi_0).
  $$
  As in \cite{GMT}, for any integer $k\geq 1$, we consider the cut-off $\widetilde{\mu}_{0,k}=h_k\circ \widetilde{\mu}_{0}$, where $h_k$ is a globally Lipschitz continuous function given by
  \begin{equation*}
  h_k(r)=\begin{cases}
  k,\quad\ \ \ \, r>k,\\
  r,\qquad   r\in[-k, k],\\
  -k,\quad \, r<-k.
  \end{cases}
  \end{equation*}
  Since $\mu_0\in H^1(\Omega)$ and $\varphi_0\in H^2_N(\Omega)$, it is straightforward to check that $$\widetilde{\mu}_0,\ \widetilde{\mu}_{0,k}\in H^1(\Omega),\quad  \|\widetilde{\mu}_{0,k}\|_{H^1}\leq \|\widetilde{\mu}_{0}\|_{H^1}\quad \text{and}\quad  \|\widetilde{\mu}_{0,k}-\widetilde{\mu}_{0}\|\to 0\quad \text{as}\quad k\to+\infty.
   $$
  Next, we define $\varphi_{0,k}$ as the unique solution to the nonlinear elliptic problem
  $$
  \begin{cases}
  -\Delta \varphi_{0,k} +\varPsi_0'(\varphi_{0,k})=\widetilde{\mu}_{0,k} \quad \ \  \text{in}\ \Omega, \\
   \partial_{\bm{n}}\varphi_{0,k} =0\qquad\qquad\qquad \qquad  \text{on}\ \partial \Omega.
   \end{cases}
  $$
  From \cite[Lemma A.1]{GMT} we infer that  $\varphi_{0,k}\in H^2_N(\Omega)$, $\varPsi_0'(\varphi_{0,k})\in L^2(\Omega)$, and moreover, it holds
   $$\|\varphi_{0,k}\|_{H^2}+\|\varPsi_0'(\varphi_{0,k})\|\leq C(1+\|\widetilde{\mu}_0\|)\quad \text{and}\quad  \|\varphi_{0,k}-\varphi_0\|_{H^1}\to 0\quad\text{as}\quad k\to +\infty.
    $$
  Thus, there exists an integer $\widehat{k}\gg 1$ such that
  $$
  \|\varphi_{0,k}\|_{H^1}\leq 1+\|\varphi_{0}\|_{H^1} \quad
  \text{and}\quad |\overline{\varphi_{0,k}}|\leq \frac{1+|\overline{\varphi_{0}}|}{2}<1,\qquad \forall\, k\geq \widehat{k}.
  $$
  On the other hand, from \cite[Lemma A.1]{CG}, we find
  $$
  \|\varPsi_0'(\varphi_{0,k})\|_{L^\infty}\leq \|\widetilde{\mu}_{0,k}\|_{L^\infty}\leq k,
  $$
  which together with (H2) further implies that
  $\|\varphi_{0,k}\|_{C(\overline{\Omega})}\leq 1-\delta_k$ for some constant  $\delta_k\in (0,1)$ depending on $k$. From the standard elliptic estimate, we also get $\varphi_{0,k}\in H^3(\Omega)$.

  Concerning the initial datum $\sigma_0$, we simply take $\sigma_{0,k}$ as the unique solution of
  $$
  \begin{cases}
  -\displaystyle{\frac{1}{k}} \Delta \sigma_{0,k} + \sigma_{0,k} =\sigma_{0}\qquad \text{in}\ \Omega, \\
   \partial_{\bm{n}}\sigma_{0,k} =0\qquad\qquad \qquad\ \ \,   \text{on}\ \partial \Omega.
   \end{cases}
  $$
  Since $\sigma_0\in H^1(\Omega)$, then it easily follows that  $\sigma_{0,k}\in H^2_N(\Omega)\cap H^3(\Omega)$, $\overline{\sigma_{0,k}}=\overline{\sigma_{0}}$ and  $\|\sigma_{0,k}\|_{H^1}\leq \|\sigma_{0}\|_{H^1}$ for all $k\geq 1$. Besides, we have
  $\|\sigma_{0,k}-\sigma_0\|\to 0$ as $k\to+\infty$. Hence, $\|\sigma_{0,k}\|\leq 1+\|\sigma_{0}\|$ holds for all $k\geq \widehat{k}$, provided that $\widehat{k}$ is large enough.
  \medskip

 \textbf{The semi-Galerkin scheme}. We perform a finite dimensional approximation of the Navier--Stokes equations \eqref{f3.c}. Let the family $\{\bm{y}_{i}\}_{i=1}^{\infty}$ be a basis of the Hilbert space $\bm{H}^1_{0,\mathrm{div}}(\Omega)$, which is given by eigenfunctions of the Stokes problem
  \be
  (\nabla \bm{y}_{i},\nabla \bm{w})=\lambda_{i}(\bm{y}_{i},\bm{w}),\quad  \forall\, \bm{w} \in {\bm{H}^1_{0,\mathrm{div}}(\Omega)},\quad \textrm{with}\ \|\bm{y}_{i}\|=1.\nonumber
  \ee
  Here, $\lambda_{i}$ is the eigenvalue corresponding to $\bm{y}_{i}$.  It is well-known that $0<\lambda_{1}< \lambda_{2}<\cdots $ is an unbounded monotonically increasing sequence, $\{\bm{y}_{i}\}_{i=1}^{\infty}$ forms a complete orthonormal basis in $\bm{L}^2_{0,\mathrm{div}}(\Omega)$ and it is also orthogonal in $\bm{H}^1_{0,\mathrm{div}}(\Omega)$.
  For every integer $m\geq 1$, we denote by
  $\bm{H}_{m}:=\textrm{span} \{\bm{y}_{1},\cdots,\bm{y}_{m}\}$ the finite-dimensional subspace of $\bm{L}^2_{0,\mathrm{div}}(\Omega)$.
  Moreover, we use $\bm{P}_{\bm{H}_m}$ for the corresponding orthogonal projections from $\bm{L}^2_{0,\mathrm{div}}(\Omega)$ onto $\bm{H}_m$.
  Thanks to (H4), the regularity theory of the Stokes
operator yields that $\bm{y}_{i} \in \bm{H}^4(\Omega)\cap \bm{H}^1_{0,\mathrm{div}}(\Omega)$ for all $i\geq 1$.

  Let $T>0$ be a given final time. For any $\gamma\in (0,1)$ and integers $m, k\geq 1$, we consider the following semi-Galerkin scheme for a regularized version of the original problem \eqref{f3.c}--\eqref{ini0}:
  $$
  \text{looking for functions} \quad\bm{v}^{m}(x,t):=\sum_{i=1}^{m}a_{i}^{m}(t)\bm{y}_{i}(x) \quad \text{and} \quad (\varphi^{m},\, \mu^{m},\, \sigma^{m}),
  $$
  which satisfy the equations
  \begin{equation}
  \begin{cases}
  (\partial_t  \bm{ v}^{m},\bm{w})+( \bm{ v}^{m} \cdot \nabla  \bm {v}^{m},\bm{ w})+ \big(  2\nu(\varphi^{m}) D\bm{v}^{m},\nabla \bm{w}\big)  =\big((\mu^{m}+\chi \sigma^{m})\nabla \varphi^{m},\bm {w}\big), \\
  \qquad \qquad \text{for all } \bm{w} \in \bm{H}_{m}\ \text{and almost all } t\in (0,T),\\
  \partial_t \varphi^m+ \bm{v}^m \cdot \nabla \varphi^m =\Delta  \mu^{m}-\alpha(\overline{\varphi^m}-c_0), \qquad \ \ \qquad \qquad \qquad \ \ \text{a.e. in }\ \ \Omega\times(0,T),\\
  \mu^{m}=\gamma\partial_t \varphi^m-\Delta \varphi^{m}+\varPsi'(\varphi^{m})-\chi \sigma^{m}+\beta\mathcal{N}(\varphi^m-\overline{\varphi^m}),   \ \quad  \quad    \text{a.e. in }\ \ \Omega\times(0,T),\\
  \partial_t \sigma^m+\bm{v}^m \cdot \nabla \sigma^m =\Delta (\sigma^m-\chi\varphi^m),  \qquad\qquad\qquad\quad\quad\quad\quad\quad \ \text{a.e. in }\ \ \Omega\times(0,T),
  \end{cases}
  \label{sapp1}
  \end{equation}
  as well as the boundary and initial conditions
  \begin{equation}
  \begin{cases}
  \bm{v}^m=\mathbf{0},\quad {\partial}_{\bm{n}}\varphi^m ={\partial}_{\bm{n}}\mu^m={\partial}_{\bm{n}}\sigma^m=0, \qquad\qquad\quad \quad \ \ \textrm{on} \   \partial\Omega\times(0,T),
  \\
  \bm{v}^{m}|_{t=0}=\bm{P}_{\bm{H}_{m}} \bm{v}_{0},\quad \varphi^{m}|_{t=0}=\varphi_{0,k},\quad  \sigma^{m}|_{t=0}=\sigma_{0,k},\quad \text{in}\ \Omega.
  \end{cases}
  \label{sapp2}
  \end{equation}
  For simplicity of the notation, we do not explicitly write down the approximating parameters $k$ and $\gamma$ in the three-level approximation scheme \eqref{sapp1}--\eqref{sapp2}.

   We can establish the following result on the existence and uniqueness of a local strong solution $(\bm{v}^m,\varphi^m,\mu^m,\sigma^m)$ to problem \eqref{sapp1}--\eqref{sapp2}.

\begin{proposition}\label{exe-app}
Suppose that the hypotheses (H1), (H2) and (H4) are satisfied. For any given $\gamma\in(0,1)$ and integers $m, k\geq 1$,  there exists a time $T_{m}\in (0,T]$ depending on $\|\bm{v}_{0}\|_{\bm{H}^1}$, $\|\varphi_{0,k}\|_{H^3}$, $\|\widetilde{\mu}_0\|_{H^1}$, $\|\sigma_{0,k}\|_{H^2}$, $\overline{\varphi_0}$, $\overline{\sigma_0}$, $\Omega$, and the coefficients of the system such that problem \eqref{sapp1}--\eqref{sapp2} admits a unique local strong solution $(\bm{v}^{m},\varphi^{m},\mu^{m},\sigma^{m})$ on $[0,T_{m}]$ satisfying
\begin{align}
&\bm{v}^{m} \in H^1(0,T_{m};\bm{H}_m(\Omega))\notag,\\
&\varphi^{m} \in L^\infty(0,T_{m};H^3(\Omega))\cap L^{2}(0,T_{m};H^4(\Omega)) \notag  \\
&\partial_t\varphi^m\in L^\infty(0,T_m,H^1(\Omega))\cap L^2(0,T_{m};H^2(\Omega))\cap H^1(0,T_m;L^2(\Omega)),\notag\\
&\mu^{m} \in L^{\infty}(0,T_{m};H^2(\Omega))\cap H^{1}(0,T_{m};L^2(\Omega)),\notag \\
&\sigma^{m} \in L^\infty(0,T_{m};H^2(\Omega))\cap L^2(0,T_{m};H^3(\Omega)), \notag  \\
&\partial_t\sigma^m \in L^{\infty}(0,T_{m};L^2(\Omega)) \cap L^2(0,T_{m};H^1(\Omega))\cap H^1(0,T_m;(H^1(\Omega))').\notag
\end{align}
Moreover, $|\varphi^m(x,t)|<1-\delta(\gamma,m,k)$ for all $(x,t)\in \overline{\Omega}\times[0,T_m]$,
where the constant $\delta(\gamma,m,k)\in (0,\delta_k)$ may depend on the parameters $\gamma$, $m$ and $k$.
\end{proposition}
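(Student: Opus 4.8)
The proof proceeds by a two-level fixed-point scheme: an \emph{inner problem} that solves the coupled subsystem for $(\varphi^m,\sigma^m,\mu^m)$ with the velocity prescribed, and an \emph{outer} fixed point for the finite-dimensional velocity. The basic simplification is that every $\bm v\in\bm H_m$ is a finite linear combination of the Stokes eigenfunctions $\bm y_i\in\bm H^4(\Omega)\cap\bm H^1_{0,\mathrm{div}}(\Omega)$, so on $\bm H_m$ all spatial norms are equivalent and are controlled by $|a(t)|:=\big(\sum_{i=1}^m a_i(t)^2\big)^{1/2}$; in particular $\bm v\cdot\nabla\bm v$ and $\nu(\varphi)D\bm v$ carry no loss of spatial regularity. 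Moreover, integrating the second equation of \eqref{sapp1} over $\Omega$ and using \eqref{sapp2} yields $\overline{\varphi^m}(t)=c_0+(\overline{\varphi_{0,k}}-c_0)e^{-\alpha t}$, which is explicitly known and stays in $(-1,1)$ for all $t\ge0$; thus the mean decouples and one may work with the zero-mean part $\varphi^m-\overline{\varphi^m}$.

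\emph{Inner problem.} Fix $\bm v\in C([0,T'];\bm H_m)$ with $\sup_{[0,T']}|a(t)|\le R$, where $R>0$ and $T'\in(0,T]$ are chosen later. Eliminating $\mu^m$ between the second and third equations of \eqref{sapp1} produces a viscous Cahn--Hilliard equation for $\varphi^m$ (the viscous term being $\gamma\partial_t\varphi^m$), coupled with the linear advection--diffusion equation $\partial_t\sigma^m+\bm v\cdot\nabla\sigma^m=\Delta\sigma^m-\chi\Delta\varphi^m$. To handle the singular potential I would replace $\varPsi_0'$ by a globally Lipschitz, monotone (e.g.\ Yosida) approximation $\varPsi_{0,\lambda}'$, $\lambda\in(0,1)$, solve the resulting semilinear system by a Faedo--Galerkin approximation in the Laplace--Neumann eigenbasis together with Gronwall-type a priori estimates and an Aubin--Lions compactness argument, and then pass to the limit $\lambda\to0^+$, monotonicity of $\varPsi_0'$ ensuring the identification of the limit. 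The a priori bounds come from: testing the $\varphi^m$-equation in $\mu^m$-form with $\mu^m$ and the $\sigma^m$-equation with $\sigma^m-\chi\varphi^m$, which gives an energy-type estimate carrying the extra dissipation $\gamma\|\partial_t\varphi^m\|^2$ (the $\chi$ cross-convection terms cancel because $\mathrm{div}\,\bm v=0$); a further chain of higher-order estimates (time-differentiating the equations, iterating the elliptic estimate for the singular Neumann problem $-\Delta\varphi^m+\varPsi_0'(\varphi^m)=g^m$ with $g^m:=\mu^m-\gamma\partial_t\varphi^m+\theta_0\varphi^m+\chi\sigma^m-\beta\mathcal N(\varphi^m-\overline{\varphi^m})$, cf.\ \cite{GMT,H2}) then delivers $\varphi^m\in L^\infty_tH^3\cap L^2_tH^4$, $\varPsi_0'(\varphi^m)\in L^\infty_tL^2$, $\mu^m\in L^\infty_tH^2\cap H^1_tL^2$, and the regularity of $\sigma^m$ asserted in the statement. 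For the strict separation $|\varphi^m(x,t)|<1-\delta(\gamma,m,k)$ one uses $\mu^m\in L^\infty(0,T';H^2(\Omega))\hookrightarrow L^\infty(\overline\Omega\times[0,T'])$ in three dimensions, whence $g^m$, and therefore $\varPsi_0'(\varphi^m)$ by the elliptic estimate, is bounded pointwise; the divergence of $\varPsi_0'$ at $\pm1$ in (H2) then forces $\varphi^m$ away from $\pm1$, exactly as for the viscous Cahn--Hilliard equation in \cite{H2,Gio2022}. Here the viscous term $\gamma\partial_t\varphi^m$ is indispensable, as it is what upgrades $\mu^m$ to $L^\infty_tH^2_x$. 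Uniqueness of the inner solution follows from a difference estimate exploiting the monotonicity of $\varPsi_0'$ and the $L^\infty$ bounds, together with Gronwall.

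\emph{Outer fixed point.} The same difference estimate, applied to two inner solutions associated with velocities $\bm v_1,\bm v_2$, shows that the solution map $\bm v\mapsto(\varphi^m,\sigma^m,\mu^m)$ is Lipschitz from $C([0,T'];\bm H_m)$ into, say, $C([0,T'];L^2(\Omega))^3$, with constants depending on $R$, the parameters $\gamma,m,k$, and the data. The velocity part of \eqref{sapp1} then reduces to a nonautonomous ODE system $\dot a(t)=\Phi\big(t,a(t)\big)$ for $a=(a_1^m,\dots,a_m^m)$, where $\Phi$ collects the quadratic convection term, the bounded viscosity term $-\big(2\nu(\varphi^m)D\bm y_i,\nabla\bm y_j\big)$, and the capillary forcing $\big((\mu^m+\chi\sigma^m)\nabla\varphi^m,\bm y_j\big)$ built from the inner solution; by the Lipschitz dependence above, $\Phi(t,\cdot)$ is locally Lipschitz, uniformly for $a$ in a ball and $t$ in a short interval. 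A Banach fixed point (equivalently, Picard--Lindel\"of combined with that Lipschitz dependence) gives a unique $a\in C^1([0,T_m];\mathbb R^m)$ on some $T_m\in(0,T]$ depending only on the quantities listed in the statement; setting $\bm v^m=\sum_i a_i\bm y_i$ and coupling with the corresponding inner solution yields the desired strong solution $(\bm v^m,\varphi^m,\mu^m,\sigma^m)$, whose regularity is read off from the inner estimates, and uniqueness of the full approximate solution follows by combining uniqueness of the ODE with uniqueness of the inner problem. The main obstacle is the inner viscous Cahn--Hilliard problem: obtaining the $H^3$--$H^4$ regularity and, above all, the strict separation bound with the singular potential (via the Lipschitz regularization, uniform-in-$\lambda$ estimates, the monotone limit, and the separation bootstrap driven by $\gamma\partial_t\varphi^m$); by comparison the outer ODE fixed point is routine.
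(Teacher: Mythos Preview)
Your outline is sound and matches the paper's intent: the paper itself does not prove this proposition in detail but refers (in the remark immediately following the statement) to a Schauder fixed-point argument as in \cite[Proposition~3.1]{H}. The chief structural difference is that you propose a contraction (Banach) fixed point for the outer velocity map, which requires the Lipschitz dependence of the inner solution on $\bm v$ that you sketch, whereas Schauder needs only continuity of that map together with compactness (automatic here, since $\bm H_m$ is finite-dimensional and the inner estimates give uniform bounds on the image). Either route works; yours is more laborious but delivers uniqueness directly from the contraction rather than via a separate difference estimate. Note also that the ``ODE'' $\dot a=\Phi(t,a)$ you write is nonlocal in time through the inner problem, so classical Picard--Lindel\"of does not apply; what you really mean is a Banach fixed point on the trajectory space $C([0,T'];\bm H_m)$ for the composite map $\bm v\mapsto(\varphi^m,\sigma^m,\mu^m)\mapsto\text{new }\bm v$, which is fine.

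There is one genuine slip in your separation argument. You set $g^m:=\mu^m-\gamma\partial_t\varphi^m+\theta_0\varphi^m+\chi\sigma^m-\beta\mathcal N(\varphi^m-\overline{\varphi^m})$ and invoke the \emph{elliptic} estimate for $-\Delta\varphi^m+\varPsi_0'(\varphi^m)=g^m$, claiming $g^m\in L^\infty$. But $\partial_t\varphi^m\in L^\infty_tH^1_x\cap L^2_tH^2_x$ does not embed into $L^\infty(\Omega\times(0,T'))$ in three dimensions, so this fails as written. The correct viewpoint (and what \cite{H2,Gio2022,MZ04} actually do) keeps $\gamma\partial_t\varphi^m$ on the left and treats
\[
\gamma\partial_t\varphi^m-\Delta\varphi^m+\varPsi_0'(\varphi^m)=\mu^m+\theta_0\varphi^m+\chi\sigma^m-\beta\mathcal N(\varphi^m-\overline{\varphi^m})
\]
as a second-order \emph{parabolic} equation whose right-hand side lies in $L^\infty_tH^2_x\hookrightarrow L^\infty(\overline\Omega\times[0,T'])$; the monotonicity of $\varPsi_0'$ and its divergence at $\pm1$ then yield separation via a parabolic comparison (or $L^p$-iteration) argument. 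This is precisely why the viscous term is indispensable, as you correctly emphasize.
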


\begin{remark}
Proposition \ref{exe-app} can be proved by using Schauder's fixed point argument similar to that for \cite[Proposition 3.1]{H}. We leave the detailed proof to the readers.
\end{remark}

\subsection{Uniform estimates}
For the approximation parameters
$$
\gamma\in (0,1),\quad  m\geq 1,\quad k\geq \widehat{k},
$$
we proceed to derive uniform estimates of the corresponding approximate solutions $(\bm{v}^{m},\varphi^{m},\mu^{m},\sigma^{m})$ obtained in Proposition \ref{exe-app}. In what follows, we denote by $C$, $C_i$ generic positive constants that are independent of  $\gamma$, $m$ and $k$.

As a starting point, we infer from Proposition \ref{exe-app} that
	\be
	\sup_{t\in[0,T_m]}\|\varphi^m(t)\|_{L^\infty}\leq 1.\label{inif}
	\ee
The uniform estimate \eqref{inif} turns out to be crucial in the subsequent analysis. Here, we are not able to make use of the strict separation property for $\varphi^m$, since the distance $\delta(\gamma,m,k)$ depends on $\gamma$, $m$, $k$ and it may tend to zero as $\gamma\to 0$ or $k\to +\infty$.
\medskip

\textbf{First estimate: control of the mass}. Integrating \eqref{sapp1}$_2$ over $\Omega$ and using the boundary conditions in \eqref{sapp2}, we obtain (cf. \cite{GGM2017,H})
	\be
	\frac{\mathrm{d}}{\mathrm{d}t} \big(\overline{\varphi^m}(t)-c_0\big) +\alpha\big(\overline{\varphi^m}(t)-c_0\big)=0.
	\label{conver1}
	\ee
Integrating \eqref{conver1} with respect to time, we easily find that
	\be
	\overline{\varphi^m}(t) = c_0+e^{-\alpha t}\left(\overline{\varphi_{0,k}}-c_0\right),\quad\forall\,t\in [0,T_m].\label{aver-phi}
	\ee
Thus, the mass of $\varphi^m$ is conserved if $\alpha =0$ or $\overline{\varphi_{0,k}}=c_0$. When $\alpha>0$, $\overline{\varphi^m}(t)$ converges exponentially fast to $c_0$ as time increases. The above observations imply that for $k\geq \widehat{k}$,
\be
|\overline{\varphi^m}(t)|\leq C(|\overline{\varphi_0}|,c_0)<1,\qquad \forall\,t\in [0,T_m],\label{aver-phi2}
\ee
that is, independent of $\gamma$, $m$ and $k$. Similarly, integrating \eqref{sapp1}$_4$ over $\Omega$, we get
 	\be
	\frac{\mathrm{d}}{\mathrm{d}t}\int_\Omega \sigma^m\, \mathrm{d}x =0,\notag
	\ee
which yields
	\be
	\overline{\sigma^m}(t)=\overline{\sigma_{0}},\quad \forall\, t\in [0,T_m].
	\label{aver-sig}
	\ee

\textbf{Second estimate: lower-order energy estimate}.
Multiplying \eqref{sapp1}$_1$ by $\bm{v}^m$, \eqref{sapp1}$_2$ by $\mu^m$, \eqref{sapp1}$_3$ by  $\partial_t \varphi^m$ and \eqref{sapp1}$_4$ by $\sigma^m-\chi\varphi^m$, integrating over $\Omega$ and adding the resultants together, we obtain the energy identity
	\begin{align}
	\frac{\mathrm{d}}{\mathrm{d}t}\mathcal{E}_m(t)
	+\mathcal{D}_m(t)
&= -\alpha\big(\overline{\varphi^m}(t)-c_0\big)\int_\Omega \mu^m(t) \,\mathrm{d}x,
	\label{BEL2}
	\end{align}
with
	\begin{align}
	\mathcal{E}_m(t)&=\frac{1}{2}\|\bm{v}^m(t)\|^2
    + \frac{1}{2}\|\nabla \varphi^m(t)\|^2
	+ \int_\Omega \varPsi(\varphi^m(t))\, \mathrm{d}x
	+\frac{1}{2}\|\sigma^m(t)\|^2\notag\\
	&\quad
	-\int_\Omega \chi\sigma^m(t)\varphi^m(t)\, \mathrm{d}x +\frac{\beta}{2}\|\nabla\mathcal{N}\big(\varphi^m(t)-\overline{\varphi^m(t)}\big)\|^2,
\label{E}\\
	\mathcal{D}_m(t)& =\int_\Omega 2\nu(\varphi^m(t))|D\bm{v}^m(t)|^2\, \mathrm{d}x + \|\nabla \mu^m(t)\|^2+\|\nabla\big(\sigma^m(t)-\chi\varphi^m(t)\big)\|^2\notag\\
&\quad +\gamma\|\partial_t\varphi^m(t)\|^2.
\label{D}
	\end{align}
From the construction of the approximated initial data (recall Section \ref{app-ini}), (H2) and H\"{o}lder's inequality, we see that the initial energy can be estimated by
\begin{align}
\mathcal{E}_m(0)&\leq \frac12\|\bm{v}_0\|^2
+ \frac{1}{2}\left(1+\|\varphi_0\|_{H^1}^2\right)
+ |\Omega|\max_{r\in[-1,1]}|\varPsi(r)| \notag\\
&\quad + \frac12(1+\chi^2)\|\sigma_0\|^2+ C(|\beta|+1)|\Omega|,
\label{iniE1}
\end{align}
where $C$ only depends on $\Omega$. On the other hand, it follows from \eqref{inif} that
\begin{align}
\left|\int_\Omega \chi\sigma^m(t)\varphi^m(t)\, \mathrm{d}x\right|\leq |\chi|\|\sigma^m(t)\|\|\varphi^m(t)\|\leq \frac14\|\sigma^m(t)\|^2+ \chi^2|\Omega|,\quad \forall\,
t\in [0,T_m].
\label{couple1}
\end{align}
According to (H2), we can obtain the uniform lower bound
\begin{align}
\mathcal{E}_m(t)&\geq \frac{1}{2}\|\bm{v}^m(t)\|^2
	+ \frac{1}{2}\|\varphi^m(t)\|_{H^1}^2
	+\frac{1}{4}\|\sigma^m(t)\|^2 - C_1, \quad \forall\,
t\in [0,T_m].
\label{low-bd1}
\end{align}
where $C_1$ only depends on $\chi$, $\beta$ and $\Omega$.
 	
We proceed to estimate the right-hand side of \eqref{BEL2}.  Integrating \eqref{sapp1}$_3$ over $\Omega$, we infer from (H2), \eqref{inif}--\eqref{aver-phi} and \eqref{aver-sig} that
	\begin{align}
	\left|\int_\Omega \mu^m(t)\,\mathrm{d}x\right|
	&= \left|\int_\Omega\big(\gamma\partial_t \varphi^m(t)+ \varPsi'(\varphi^m(t)) -\chi \sigma^m(t)\big) \,\mathrm{d}x\right| \notag \\
	& \le  \| \varPsi_0'(\varphi^m(t))\|_{L^1}  +|\Omega|\theta_0 +|\chi|\left|\int_\Omega \sigma_0\,\mathrm{d}x\right|  + \gamma \alpha \left|\int_\Omega(\varphi_{0,k}-c_0)\,\mathrm{d}x\right|\notag\\
&\leq  \| \varPsi_0'(\varphi^m(t))\|_{L^1}  +C, \qquad \forall\, t \in [0,T_m],
	\label{average valuea}
	\end{align}
where $C$ depends on the coefficients of the system, $\overline{\sigma_0}$ and $\Omega$.
Thanks to the assumption (H2), the term $\| \varPsi'_0(\varphi^m)\|_{L^1}$ can be controlled as follows (cf. \cite[Proposition A.1]{MZ04})
$$
\| \varPsi_0'(\varphi^m)\|_{L^1}\leq C\int_\Omega \varPsi_0'(\varphi^m)(\varphi^m-\overline{\varphi^m}) \,\mathrm{d}x+C,
$$
where $C$ depends on $\overline{\varphi^m}$ (thus only on $\overline{\varphi_{0}}$, $c_0$ in view of \eqref{aver-phi2}) and $\Omega$. Next, multiplying \eqref{sapp1}$_3$ by $\varphi^m-\overline{\varphi^m}$ and integrating over $\Omega$, we obtain
\begin{align*}
& \int_\Omega \varPsi_0'(\varphi^m)(\varphi^m-\overline{\varphi^m}) \,\mathrm{d}x + \int_\Omega |\nabla \varphi^m|^2\mathrm{d}x \\
&\quad = \int_\Omega (\mu^m-\overline{\mu^m})(\varphi^m-\overline{\varphi^m})\, \mathrm{d}x  - \gamma\int_\Omega \partial_t \varphi^m (\varphi^m-\overline{\varphi^m})\, \mathrm{d}x \\
&\qquad + \int_\Omega \big(\chi (\sigma^{m} - \overline{\sigma^{m}}) -\beta\mathcal{N}(\varphi^m-\overline{\varphi^m})+
\theta_0 (\varphi^m- \overline{\varphi^m})\big)(\varphi^m-\overline{\varphi^m}) \,\mathrm{d}x.
\end{align*}
By H\"{o}lder's inequality and the Poincar\'{e}--Wirtinger inequality, we find that
	\begin{align}
	\| \varPsi'_0(\varphi^m)\|_{L^1}
	&\le C\big(\|  \nabla \mu^m\| + \|\sigma^m-\overline{\sigma^m}\|+ \| \varphi^m - \overline{\varphi^m}\|+ \gamma\|\partial_t \varphi^m \|\big)\|\nabla \varphi^m\|+C\notag \\
&\leq C\big(\|  \nabla \mu^m\|  + \|\nabla \sigma^m\|
+ \|\nabla \varphi^m\|+\gamma\|\partial_t \varphi^m \|\big)\|\nabla \varphi^m\|+C,
	\label{varPsi}
	\end{align}
 where $C$ depends on the coefficients of the system,  $\overline{\varphi_0}$, $\overline{\sigma_0}$ and $\Omega$.
 On the other hand, it follows from the boundary condition $\partial_{\bm{n}}\varphi^m=0$ on $\partial \Omega$, the  Cauchy--Schwarz inequality and \eqref{inif} that
	\be
	\|\nabla \varphi^m\|^2\leq \|\Delta \varphi^m\|\|\varphi^m\|\leq a\|\Delta\varphi^m\|^2+\frac{1}{4a}|\Omega|,\quad \forall\, a >0.\label{q1}
	\ee
Now multiplying \eqref{sapp1}$_3$ by $-\Delta \varphi^m$, integrating over $\Omega$, we infer from  \eqref{q1} (with some properly chosen $a$)  that
\begin{align}
& \int_\Omega \varPsi_0''(\varphi^m)|\nabla \varphi^m|^2\, \mathrm{d}x + \|\Delta \varphi^m\|^2
\notag
\\
&\quad =
\int_\Omega \nabla \mu^m\cdot \nabla \varphi^m\, \mathrm{d}x  +\gamma\int_\Omega \partial_t\varphi^m\Delta \varphi^m\,\mathrm{d}x\notag\\
&\qquad  + \int_\Omega \nabla  \big(\chi (\sigma^{m}- \overline{\sigma^{m}}) -\beta\mathcal{N}(\varphi^m-\overline{\varphi^m}) +
\theta_0 (\varphi^m-\overline{\varphi^m})\big)\cdot\nabla \varphi^m \,\mathrm{d}x \notag\\
&\quad \leq \|\nabla \mu^m\|\| \nabla \varphi^m\|+
C(\|\nabla \sigma^m\| +\|\nabla \varphi^m\|)\|\nabla \varphi^m\| +\gamma\|\partial_t\varphi^m\|\|\Delta \varphi^m\|\notag \\
&\quad \leq \frac{\gamma}{4}\|\Delta \varphi^m\|^2+ \|\nabla \mu^m \|^2 + \|\nabla \sigma^m\|^2+C\|\nabla \varphi^m\|^2+ \gamma \|\partial_t\varphi^m\|^2
\notag \\
&\quad \leq \frac{1}{4}\|\Delta \varphi^m\|^2+ \|\nabla \mu^m \|^2+ \|\nabla(\sigma^m-\chi\varphi^m)\|^2 + C\|\nabla \varphi^m\|^2 + \gamma \|\partial_t\varphi^m\|^2\notag\\
&\quad \leq \frac{1}{2}\|\Delta \varphi^m\|^2 + \|\nabla \mu^m \|^2+ \|\nabla(\sigma^m-\chi\varphi^m)\|^2+ \gamma \|\partial_t\varphi^m\|^2+C.
\label{esH2}
\end{align}
Combining the estimates \eqref{average valuea}--\eqref{esH2} and and Young's inequality, we assert that
	\begin{align}
		\left|\int_\Omega \mu^m\,\mathrm{d}x\right|
&\leq  C\big(\|  \nabla \mu^m\|  + \|\nabla \sigma^m\|
+ \|\nabla \varphi^m\|+\gamma\|\partial_t \varphi^m \|\big)\|\nabla \varphi^m\|+C\notag\\
&\leq \frac{a}{2} \big(\|\nabla \mu^m\|^2+\|\nabla(\sigma^m-\chi\varphi^m)\|^2+\gamma\|\partial_t \varphi^m \|^2\big) \notag\\
&\quad + C(1+a^{-1})\|\nabla \varphi^m\|^2+C\notag\\
& \le a \big(\|\nabla \mu^m\|^2+\|\nabla(\sigma^m-\chi\varphi^m)\|^2+\gamma\|\partial_t \varphi^m \|^2\big)+C\left[\frac{(1+a)^2}{a^3}+1\right],
	\label{q2}
	\end{align}
for any $a\in (0,1)$, where the positive constant $C$ depends on the coefficients of the system, $\overline{\varphi_0}$, $\overline{\sigma_0}$, and $\Omega$, but it is independent of $a$. 	

When $\alpha=0$, we simply have $-\alpha(\overline{\varphi^m}-c_0)\int_\Omega \mu^m \, \mathrm{d}x=0$. When $\alpha> 0$, in view of \eqref{aver-phi} and \eqref{BEL2}, we can choose $a=1/4\alpha$ in  \eqref{q2} to conclude that
	\begin{align}
	\left|\alpha(\overline{\varphi^m}-c_0)\int_\Omega \mu^m \, \mathrm{d}x\right|
& \leq  2\alpha a \big(\|\nabla \mu^m\|^2+\|\nabla(\sigma^m-\chi\varphi^m)\|^2+\gamma\|\partial_t \varphi^m \|^2\big)\notag\\
&\quad + C\left[\frac{(1+a)^2}{a^3}+1\right] \alpha  e^{-\alpha t}|\overline{\varphi_{0,k}}-c_0|\notag \\
& \le \frac12\left(\|\nabla \mu^m\|^2 +\|\nabla(\sigma^m-\chi\varphi^m)\|^2+\gamma\|\partial_t \varphi^m \|^2\right) \notag\\
&\quad  +C_2\big(1+\alpha^3\big)\alpha  e^{-\alpha t}|\overline{\varphi_{0,k}}-c_0|,
\label{q3}
	\end{align}
where $C_2$  depends on $\overline{\varphi_0}$, $\overline{\sigma_0}$, $\Omega$ and the coefficients of the system except $\alpha$.

Define
	\begin{align}
	&\widetilde{\mathcal{E}}_m(t)=\mathcal{E}_m(t)+\widetilde{C}_2  e^{-\alpha t}|\overline{\varphi_{0,k}}-c_0|,\label{EE}
	\end{align}
with $\widetilde{C}_2=C_2(1+\alpha^3)\alpha>0$.
From \eqref{BEL2} and \eqref{q3}, we deduce that
	\begin{align} &\frac{\mathrm{d}}{\mathrm{d}t}\widetilde{\mathcal{E}}_m(t)
	+\frac12\mathcal{D}_m(t) \leq  0,\quad \forall\, t\in (0,T_m).
	\label{BEL4}
	\end{align}
An integration in time on $[0,t]$, with $t\in (0,T_m]$, yields
\begin{align}
\widetilde{\mathcal{E}}_m(t)+ \frac12\int_0^t\mathcal{D}_m(\tau)\,\mathrm{d}\tau\leq  \widetilde{\mathcal{E}}_m(0), \quad \forall\, t\in (0,T_m].
\label{low-es1}
\end{align}
In view of the definitions of $\widetilde{\mathcal{E}}_m(t)$ and $\mathcal{D}_m(t)$, we conclude from \eqref{iniE1}, \eqref{low-bd1}, \eqref{low-es1} and Korn's inequality that
	\begin{align}
	&\|\bm{v}^m(t)\|^2 +\|\varphi^m(t)\|_{H^1}^2+\|\sigma^m(t)\|^2 \notag \\
	&\qquad +\int_{0}^{t}\big(\|\nabla\bm{v}^m(\tau)\|^2 + \|\nabla \mu^m(\tau)\|^2+ \|\nabla (\sigma^m-\chi\varphi^m)(\tau)\|^2+\gamma\|\partial_t \varphi^m (\tau)\|^2\big)\,  \mathrm{d} \tau \notag \\
	& \quad\le C,\quad \forall\, t\in [0,T_m].
	\label{low-es2}
	\end{align}
 From \eqref{q2}, \eqref{low-es2}  and the Poincar\'{e}--Wirtinger inequality, we also find
\be
\int_{0}^{t}\big(\| \mu^m(\tau)\|^{2}_{H^1}+ \|\sigma^m(\tau)\|_{H^1}^2 \big) \, \mathrm{d} \tau \leq C,\quad \forall\, t\in [0,T_m]. \label{low-es3}
\ee
The positive constant $C$ in \eqref{low-es2} and \eqref{low-es3} depends on the coefficients of the system, $\Omega$, $\overline{\varphi_0}$, $\overline{\sigma_0}$, $\max_{r\in[-1,1]}|\varPsi(r)|$,  $\|\bm{v}_0\|$, $\|\varphi_0\|_{H^1}$ and $\|\sigma_0\|$.
\medskip

\textbf{Third estimate: higher-order energy estimate}.
Since $\bm{v}_m\in \bm{H}_m$, $\bm{v}_m$ and $\partial_t\bm{v}_m$ have sufficient spatial regularity to be used as test functions. Testing \eqref{sapp1}$_1$ with $\partial_{t} \bm{v}^m$, we obtain
\begin{align}
\|\partial_{t} \bm{v}^m\|^2
& = -( \bm{ v}^{m} \cdot \nabla  \bm {v}^{m},\partial_{t} \bm{v}^m)+ 2\big(  \mathrm{div}(\nu(\varphi^{m}) D\bm{v}^{m}),\partial_{t} \bm{v}^m\big) \notag \\ &\quad +\big((\mu^{m}+\chi \sigma^{m})\nabla \varphi^{m},\partial_{t} \bm{v}^m\big).\notag
\end{align}
Recalling \eqref{varPsi} and applying the estimate \eqref{low-es2}, we now have
\begin{align}
	\| \varPsi'_0(\varphi^m)\|_{L^1}
	&\le C\big(\|  \nabla \mu^m\| + \|\sigma^m-\overline{\sigma^m}\|+ \| \varphi^m - \overline{\varphi^m}\| +\gamma\|\partial_t \varphi^m \|\big)\|\nabla \varphi^m\|+C\notag \\
&\leq C\big(\|  \nabla \mu^m\|+ \gamma\|\partial_t \varphi^m \|+1\big),
	\notag
	\end{align}
which combined with \eqref{average valuea}
further implies
\begin{align}
	\left|\int_\Omega \mu^m\,\mathrm{d}x\right|
	\leq C\big(\|  \nabla \mu^m\|+\gamma\|\partial_t \varphi^m \|+ 1\big).
	\label{muave}
	\end{align}
Then from the Poincar\'{e}--Wirtinger inequality, we get
\begin{align}
	\|\mu^m\|_{H^1}
	\leq C\big(\|  \nabla \mu^m\|+ \gamma\|\partial_t \varphi^m \|+1\big).
	\label{muH1}
	\end{align}
Recalling the definition of the Stokes operator $\boldsymbol{S}$ and following an argument similar to that for \cite[(5.3)]{GMT}, we can deduce from \eqref{low-es2} and \eqref {muH1} that
\begin{align}
\left\|\partial_{t} \bm{v}^m\right\|^{2} & \leq C_3\left[\left\|\boldsymbol{S} \bm{v}^m\right\|^{2}+\left\|\nabla \bm{v}^m\right\|^{6} +\left\|\varphi^m\right\|_{H^{2}}^{4}\left\|\nabla \bm{v}^m\right\|^{2}\right]\notag\\
&\quad   +C_3 \left\|\nabla \varphi^m\right\|_{\bm{L}^{3}}^{2}\big(1+\left\|\nabla \mu^m\right\|^{2}+ \|\nabla(\sigma^m-\chi\varphi^m)\|^2
+\gamma\|\partial_t \varphi^m \|^2\big),
\label{3v}
\end{align}
where $C_3$ depends on the coefficients of the system, $\Omega$, $\overline{\varphi_0}$, $\overline{\sigma_0}$, $\max_{r\in[-1,1]}|\varPsi(r)|$,  $\|\bm{v}_0\|$, $\|\varphi_0\|_{H^1}$ and $\|\sigma_0\|$.

Next, testing \eqref{sapp1}$_1$  with $\bm{S} \bm{v}^m$, we get
\begin{align*}
& \frac12\frac{\mathrm{d}}{\mathrm{d}t}\|\nabla \bm{ v}^{m}\|^2 + 2\int_\Omega  \mathrm{div}\big(\nu(\varphi^{m}) D\bm{v}^{m}\big) \cdot\bm{S} \bm{v}^m \,\mathrm{d}x \\
&\quad =-\int_\Omega (\bm{ v}^{m} \cdot \nabla  \bm {v}^{m})  \cdot\bm{S} \bm{v}^m \,\mathrm{d}x + \int_\Omega (\mu^{m}+\chi \sigma^{m})\nabla \varphi^{m}\cdot\bm{S} \bm{v}^m \,\mathrm{d}x.
\end{align*}
From the decomposition $\bm{S} \bm{v}^m = -\Delta \bm{v}^{m} + \nabla p^m $ with some $p^m\in L^2(0,T_m; H^1(\Omega))$, similar to \cite[(5.4)]{GMT}, we can apply Lemma \ref{stokes}, (H1),  \eqref{low-es2} and \eqref{muH1} to conclude that
\begin{align}
&\frac{1}{2} \frac{\mathrm{d}}{\mathrm{d} t}\left\|\nabla \bm{v}^m\right\|^{2}+\nu_{*}\left\|\boldsymbol{S} \bm{v}^m\right\|^{2}\notag\\
&\quad \leq C_{4}\left[(1+\left\|\varphi^m\right\|^{8}_{H^2})\left\|\nabla \bm{v}^m\right\|^{2}+\left\|\nabla \bm{v}^m\right\|^{6}\right]\notag\\
&\qquad
+C_4 \left\|\nabla \varphi^m\right\|_{\boldsymbol{L}^{3}}^{2}\big(1+\|\nabla \mu^m\|^{2}+ \|\nabla(\sigma^m-\chi\varphi^m)\|^2 +\gamma\|\partial_t \varphi^m \|^2\big),
\label{3vt}
\end{align}
where $C_4$ has a similar dependence like that for $C_3$.

To control the $H^2$-norm of $\varphi^m$, we recall   \eqref{esH2} and apply \eqref{low-es2} to obtain
\begin{align}
& \int_\Omega \varPsi_0''(\varphi^m)|\nabla \varphi^m|^2\, \mathrm{d}x + \|\Delta \varphi^m\|^2
\notag
\\
&\quad =
\int_\Omega \nabla \mu^m\cdot \nabla \varphi^m\, \mathrm{d}x  +\gamma\int_\Omega \partial_t\varphi^m\Delta \varphi^m\,\mathrm{d}x\notag\\
&\qquad  - \int_\Omega \big(\chi \sigma^{m} -\beta\mathcal{N}(\varphi^m-\overline{\varphi^m}) +
\theta_0 \varphi^m\big)\Delta \varphi^m \,\mathrm{d}x \notag\\
&\quad \leq \|\nabla \mu^m\|\| \nabla \varphi^m\|+\gamma\|\partial_t\varphi^m\|\|\Delta \varphi^m\|+
C(\|\sigma^m\| +\|\varphi^m\|)\|\Delta \varphi^m\|\notag \\
&\quad \leq \frac{1}{2} \|\Delta \varphi^m\|^2+
C\big(\|\nabla \mu^m\| +\gamma\|\partial_t\varphi^m\|^2 +1\big).
\notag
\end{align}
As a result, we infer from the elliptic estimate for the Neumann problem that
\begin{align}
\|\varphi^m\|_{H^2}^2 \leq C\big(\|\nabla \mu^m\|+\gamma\|\partial_t\varphi^m\|^2+1\big).
\label{H2es}
\end{align}
Multiplying \eqref{3v} by $\nu_{*}/(2 C_{3})>0$ and summing up with \eqref{3vt}, using the Sobolev embedding $H^2(\Omega)\hookrightarrow W^{1,3}(\Omega)$ as well as the estimates \eqref{low-es2}, \eqref{H2es}, we arrive at
\begin{align}
&\frac{1}{2} \frac{\mathrm{d}}{\mathrm{d} t}\left\|\nabla \bm{v}^m\right\|^{2}+\frac{\nu_{*}}{2}\left\|\boldsymbol{S} \bm{v}^m\right\|^{2}+\frac{\nu_{*}}{2 C_{3}}\left\|\partial_{t} \bm{v}^m\right\|^{2}\notag \\
&\quad \leq C \left[\big(1+\left\|\varphi^m\right\|_{H^{2}}^{8}\big)\left\|\nabla \bm{v}^m\right\|^{2}+\left\|\nabla \bm{v}^m\right\|^{6} \right.\notag\\
&\qquad \left.+\left\| \varphi^m\right\|_{H^2}^{2}\big(1+\left\|\nabla \mu^m\right\|^{2}+ \|\nabla(\sigma^m-\chi\varphi^m)\|^2 +\gamma\|\partial_t\varphi^m\|^2 \big)\right]\notag\\
 &\quad \leq C \left[\big(1+\left\|\nabla \mu^m\right\|^{4}+\gamma^4\|\partial_t\varphi^m\|^8\big)\left\|\nabla \bm{v}^m\right\|^{2}+\left\|\nabla \bm{v}^m\right\|^{6}\right.\notag\\
&\qquad \left.+\big(1+\left\|\nabla \mu^m\right\|+\gamma\|\partial_t\varphi^m\|^2\big)\big(1+\left\|\nabla \mu^m\right\|^{2}+ \|\nabla(\sigma^m-\chi\varphi^m)\|^2 +\gamma\|\partial_t\varphi^m\|^2\big)\right],
\label{vt}
\end{align}
where $C$ depends on the coefficients of the system, $\Omega$, $\overline{\varphi_0}$, $\overline{\sigma_0}$, $\max_{r\in[-1,1]}|\varPsi(r)|$,  $\|\bm{v}_0\|$, $\|\varphi_0\|_{H^1}$ and $\|\sigma_0\|$.

The regularity property obtained in Proposition \ref{exe-app} also allows us to test \eqref{sapp1}$_2$  by $\partial_{t} \mu^m$. Integrating over $\Omega$ and after integration by parts, we obtain
\begin{align*}
\frac{1}{2}\frac{\mathrm{d}}{\mathrm{d}t} \|\nabla\mu^m\|^2
+ \int_\Omega \partial_t\varphi^m\partial_{t} \mu^m\,\mathrm{d}x
+ \int_\Omega (\bm{v}^m\cdot\nabla \varphi^m)\partial_{t} \mu^m\,\mathrm{d}x
+\alpha(\overline{\varphi^m}-c_0) \int_\Omega \partial_{t} \mu^m\,\mathrm{d}x=0.
\end{align*}
 A direct computation yields that
\begin{align*}
\int_\Omega \partial_t\varphi^m\partial_{t} \mu^m\,\mathrm{d}x
& = \gamma\int_\Omega \partial_{t} \varphi^m\partial_{t}^2 \varphi^m  \,\mathrm{d}x+
\|\nabla\partial_t\varphi^m\|^2 +
\int_\Omega \varPsi_0''(\varphi^m)|\partial_t\varphi^m|^2\,\mathrm{d}x \\
&\quad -\theta_0\|\partial_t\varphi^m\|^2
-\chi\int_\Omega \partial_{t} \varphi^m\partial_t\sigma^m  \,\mathrm{d}x
+\beta \int_\Omega \partial_{t} \varphi^m \mathcal{N}(\partial_{t} \varphi^m -\overline{\partial_{t} \varphi^m}) \,\mathrm{d}x,
\end{align*}
\begin{align*}
\int_\Omega (\bm{v}^m\cdot\nabla \varphi^m)\partial_{t} \mu^m\,\mathrm{d}x
&= \frac{\mathrm{d}}{\mathrm{d}t}\int_\Omega (\bm{v}^m\cdot \nabla \varphi^m)\mu^m\, \mathrm{d}x- \int_\Omega (\partial_t\bm{v}^m\cdot \nabla \varphi^m)\mu^m\,\mathrm{d}x\notag\\
&\quad -
\int_\Omega (\bm{v}^m\cdot \nabla \partial_t \varphi^m)\mu^m\,\mathrm{d}x.
\end{align*}
Besides, from \eqref{conver1}, it follows that
\begin{align*}
 \alpha(\overline{\varphi^m}-c_0) \int_\Omega \partial_{t} \mu^m\,\mathrm{d}x
&=   \frac{\mathrm{d}}{\mathrm{d}t} \left(\alpha(\overline{\varphi^m}-c_0)  \int_\Omega \mu^m\,\mathrm{d}x \right)- \alpha \left(\frac{\mathrm{d}}{\mathrm{d}t}\overline{\varphi^m} \right) \int_\Omega \mu^m\,\mathrm{d}x
\notag\\
&=  \frac{\mathrm{d}}{\mathrm{d}t} \left(\alpha(\overline{\varphi^m}-c_0)  \int_\Omega \mu^m\,\mathrm{d}x \right) +
\alpha^2 \big(\overline{\varphi^m}(t)-c_0\big) \int_\Omega \mu^m\,\mathrm{d}x.
\end{align*}
As a consequence, we deduce that
	\begin{align}
&\frac{\mathrm{d}}{\mathrm{d}t} \left[\frac{1}{2}\|\nabla\mu^m\|^2+\frac{\gamma}{2}\|\partial_t \varphi^m\|^2
+\int_\Omega (\bm{v}^m\cdot\nabla\varphi^m)\mu^m\,\mathrm{d}x + \alpha(\overline{\varphi^m}-c_0) \int_\Omega   \mu^m\,\mathrm{d}x\right]
+\|\nabla\partial_t\varphi^m\|^2\notag \\
&\quad = -\int_\Omega \varPsi_0''(\varphi^m)|\partial_t\varphi^m|^2\,\mathrm{d}x +\theta_0\|\partial_t\varphi^m\|^2 +\chi\int_\Omega \partial_{t} \varphi^m\partial_t\sigma^m  \,\mathrm{d}x
\notag\\
&\qquad
-\beta \int_\Omega \partial_{t} \varphi^m\mathcal{N}(\partial_{t} \varphi^m -\overline{\partial_{t} \varphi^m}) \,\mathrm{d}x + \int_\Omega (\partial_t\bm{v}^m\cdot \nabla \varphi^m)\mu^m\,\mathrm{d}x \notag\\
&\qquad +\int_\Omega (\bm{v}^m\cdot \nabla \partial_t \varphi^m)\mu^m\,\mathrm{d}x -\alpha^2 \big(\overline{\varphi^m}(t)-c_0\big) \int_\Omega \mu^m\,\mathrm{d}x\notag\\
&\quad =: \sum_{j=1}^7 I_j. \notag
\end{align}
From (H2), \eqref{sapp1}$_2$, \eqref{conver1} and \eqref{low-es2}, we find $I_1\leq 0$ and
\begin{align*}
I_2 &\leq 2\theta_0\big(\|\partial_t\varphi^m- \overline{\partial_t\varphi^m}\|^2 + |\Omega| |\overline{\partial_t\varphi^m}|^2\big)\notag\\
&\leq C\|\nabla \partial_t\varphi^m\|\|\partial_t\varphi^m- \overline{\partial_t\varphi^m}\|_{V_0'} +C|\overline{\partial_t\varphi^m}|^2\notag\\
&\leq \|\nabla \partial_t\varphi^m\|\big(
\|\nabla \mu^m\|+ \|\bm{v}^m\cdot \nabla \varphi^m\|_{L^\frac65}\big) +C\\
&\leq  \frac{1}{8} \|\nabla \partial_t\varphi^m\|^2
+C\|\nabla \mu^m\|^2+C\|\bm{v}^m\|_{\bm{L}^3}^2\|\nabla \varphi^m\|^2 +C\\
&\leq  \frac{1}{8} \|\nabla \partial_t\varphi^m\|^2
+C\|\nabla \mu^m\|^2+C\|\nabla \bm{v}^m\|^2  +C.
\end{align*}
Using \eqref{aver-sig}, \eqref{low-es2}, H\"{o}lder's inequality and the Poincar\'{e}--Wirtinger inequality, we estimate $I_3$ as follows
\begin{align*}
I_3&= \chi\int_\Omega (\partial_{t} \varphi^m -\overline{\partial_{t} \varphi^m})\partial_t\sigma^m  \,\mathrm{d}x\notag\\
&\leq C\|\partial_t\varphi^m-\overline{\partial_{t} \varphi^m}\|_{V_0}\|\partial_t\sigma^m\|_{V_0'}\notag\\
&\leq C\|\nabla \partial_t\varphi^m\|\big(\|\nabla(\sigma^m-\chi\varphi^m)\|+\|\bm{v}^m \cdot \nabla \sigma^m\|_{L^\frac65}\big)\\
&\leq \frac{1}{8} \|\nabla \partial_t\varphi^m\|^2 +  C\|\nabla (\sigma^m-\chi\varphi^m)\|^2+ C\|\bm{v}^m\|_{\bm{L}^3}^2\|\nabla \sigma^m\|^2
\\
&\leq \frac{1}{8} \|\nabla \partial_t\varphi^m\|^2 +  C\|\nabla (\sigma^m-\chi\varphi^m)\|^2+ C\|\nabla\bm{v}^m\|^2\big(\|\nabla (\sigma^m-\chi\varphi^m)\|^2+1\big).
\end{align*}
Concerning $I_4$, we infer from \eqref{sapp1}$_2$ and the Poincar\'{e}--Wirtinger inequality that
\begin{align*}
I_4& =-\beta \int_\Omega (\partial_{t} \varphi^m - \overline{\partial_{t} \varphi^m})\mathcal{N}(\partial_{t} \varphi^m -\overline{\partial_{t} \varphi^m})\,\mathrm{d}x\\
&\leq C \|\nabla \partial_t \varphi^m\| \|\mathcal{N}(\partial_{t} \varphi^m -\overline{\partial_{t}\varphi^m}) \|\\
&\leq C\|\nabla \partial_t \varphi^m\|\big(\|\bm{v}^m \cdot \nabla \varphi^m\|_{L^\frac65}+\|\mu^m-\overline{\mu^m}\|\big)\\
&\leq \frac{1}{8} \|\nabla \partial_t\varphi^m\|^2
+ C\|\nabla \bm{v}^m\|^2 + C\|\nabla \mu^m\|^2.
\end{align*}
By \eqref{low-es2}, \eqref{muH1}, \eqref{H2es},  H\"{o}lder's inequality and Young's inequality, we find (cf. \cite[Section 4]{GMT})
\begin{align*}
I_5 &\leq \|\partial_t\bm{v}^m\|\| \nabla \varphi^m\|_{\bm{L}^3}\|\mu^m\|_{L^6}\\
&\leq \frac{\nu_{*}}{8 C_{3}}\left\|\partial_{t} \bm{v}^m\right\|^{2}+ C \|\varphi^m\|_{H^2}^2\|\mu^m\|_{H^1}^2\\
&\leq \frac{\nu_{*}}{8 C_{3}}\left\|\partial_{t} \bm{v}^m\right\|^{2}+ C \big(1+\|\nabla \mu^m\|^2+ \gamma\|\partial_t \varphi^m \|^2\big)^2,
\end{align*}
and
\begin{align*}
I_6
&\leq \|\bm{v}^m\|_{\bm{L}^3}\|\nabla \partial_t \varphi^m\|\|\mu^m\|_{L^6}\\
&\leq \frac{1}{8} \|\nabla \partial_t\varphi^m\|^2
+ C \|\bm{v}^m\|\|\nabla \bm{v}^m\|\|\mu^m\|_{H^1}^2\\
&\leq \frac{1}{8} \|\nabla \partial_t\varphi^m\|^2
+ C  \|\nabla \bm{v}^m\|\big(1+\|\nabla \mu^m\|^2+\gamma\|\partial_t \varphi^m \|^2\big)
\end{align*}
The last term $I_7$ can be estimated by simply using \eqref{q3} such that
$$
I_7\leq C\left|\int_\Omega \mu^m\,\mathrm{d}x\right|\leq   C \big(1+\|\nabla \mu^m\|^2+\|\nabla(\sigma^m-\chi\varphi^m)\|^2+\gamma\|\partial_t \varphi^m \|^2\big).
$$
Combining the above estimates, we  obtain
	\begin{align}
&\frac{\mathrm{d}}{\mathrm{d}t} \left[\frac{1}{2}\|\nabla\mu^m\|^2
+ \frac{\gamma}{2}\|\partial_t \varphi^m\|^2+ \int_\Omega (\bm{v}^m\cdot\nabla\varphi^m)\mu^m\,\mathrm{d}x + \alpha(\overline{\varphi^m}-c_0) \int_\Omega   \mu^m\,\mathrm{d}x\right] \notag\\
&\qquad
+\frac{1}{2}\|\nabla\partial_t\varphi^m\|^2
-\frac{\nu_{*}}{8 C_{3}}\left\|\partial_{t} \bm{v}^m\right\|^{2}\notag \\
&\quad\le C \big(1+\|\nabla \bm{v}^m\|^2+\|\nabla \mu^m\|^2+ \|\nabla(\sigma^m-\chi\varphi^m)\|^2+\gamma\|\partial_t \varphi^m \|^2\big)^2,
\label{i}
\end{align}
where $C$ depends on the coefficients of the system, $\Omega$, $\overline{\varphi_0}$, $\overline{\sigma_0}$, $\max_{r\in[-1,1]}|\varPsi(r)|$,  $\|\bm{v}_0\|$, $\|\varphi_0\|_{H^1}$ and $\|\sigma_0\|$.

Multiplying \eqref{sapp1}$_4$ with $-\Delta(\sigma^m-\chi\varphi^m)$ and integrating over $\Omega$, we find
\begin{align}
&\frac{1}{2} \frac{\mathrm{d}}{\mathrm{d} t}\left\|\nabla(\sigma^m-\chi\varphi^m)\right\|^{2} +\|\Delta(\sigma^m-\chi\varphi^m)\|^2\notag\\
&\quad=\int_\Omega (\bm{v}^m \cdot \nabla \sigma^m)\Delta(\sigma^m-\chi\varphi^m)\,\mathrm{d}x -\chi\int_\Omega \nabla\partial_t\varphi^m\cdot \nabla(\sigma^m-\chi\varphi^m)\,\mathrm{d}x\notag\\
&\quad =: I_8+I_9.\notag
\end{align}
It follows from H\"{o}lder's inequality, the Gagliardo--Nirenberg inequality, Young's inequality, the elliptic estimate for the Neumann problem and \eqref{H2es} that
\begin{align}
I_8&\le \|\bm{v}^m\|_{\bm{L}^{6}}\| \nabla\sigma^m\|_{\bm{L}^{3}} \|\Delta(\sigma^m-\chi\varphi^m)\|\notag\\
&\le\frac{1}{4}\|\Delta(\sigma^m-\chi\varphi^m)\|^2+ C\|\nabla\bm{v}^m\|^2\|\nabla \sigma^m\|\big(\|\Delta\sigma^m\|+ \|\sigma^m\|\big)\notag\\
&\leq \frac{1}{4}\|\Delta(\sigma^m-\chi\varphi^m)\|^2+ C\|\nabla\bm{v}^m\|^2\big(\|\Delta \sigma^m\|^\frac32\|\sigma^m\|^\frac12+ \|\nabla \sigma^m\|\|\sigma^m\|\big)\notag\\
&\leq \frac{1}{4}\|\Delta(\sigma^m-\chi\varphi^m)\|^2\notag\\
&\quad + C\|\nabla\bm{v}^m\|^2 \big(\|\Delta(\sigma^m-\chi\varphi^m)\|^\frac32+
\|\Delta \varphi^m\|^\frac32+\left\|\nabla(\sigma^m-\chi\varphi^m)\right\| + 1\big)\notag\\
&\le\frac{1}{2}\|\Delta(\sigma^m-\chi\varphi^m)\|^2 +C\|\Delta\varphi^m\|^2 +C\|\nabla\bm{v}^m\|^8+C\|\nabla\bm{v}^m\|^2 \big(\|\nabla(\sigma^m-\chi\varphi^m)\| + 1\big)\notag \\
&\le\frac{1}{2}\|\Delta(\sigma^m-\chi\varphi^m)\|^2  + C\big(1+\|\nabla \mu^m\|^2 +\left\|\nabla(\sigma^m-\chi\varphi^m)\right\|^2
+\gamma\|\partial_t\varphi^m\|^2\big)\notag\\
&\quad +C\|\nabla\bm{v}^m\|^8.
\end{align}
Besides, applying H\"{o}lder's and Young's inequalities we get
\be
I_9\le \frac{a_1}{4}\|\nabla\partial_t\varphi^m\|^2 +\frac{\chi^2}{a_1}\|\nabla(\sigma^m-\chi\varphi^m)\|^2,
\ee
where $a_1\in(0,1)$ will be determined later. As a consequence, we obtain
\begin{align}
&\frac{1}{2} \frac{\mathrm{d}}{\mathrm{d} t}\left\|\nabla(\sigma^m-\chi\varphi^m)\right\|^{2} +\frac12\|\Delta(\sigma^m-\chi\varphi^m)\|^2\notag\\
&\quad\le C\|\nabla\bm{v}^m\|^8 +\frac{a_1}{4}\|\nabla\partial_t\varphi^m\|^2 +\frac{\chi^2}{a_1}\|\nabla(\sigma^m-\chi\varphi^m)\|^2 \notag\\
&\qquad
+C\big(1+\|\nabla \mu^m\|^2 +\left\|\nabla(\sigma^m-\chi\varphi^m)\right\|^2
+\gamma\|\partial_t\varphi^m\|^2\big).
 \label{2sig}
\end{align}

Multiplying \eqref{i} with $a_1\in (0,1)$
and adding the resultant with \eqref{vt}, \eqref{2sig}, then applying Young's inequality, we arrive at
\begin{align}
&\frac{\mathrm{d}}{\mathrm{d} t} \Lambda_m(t)+\frac{\nu_{*}}{2}\left\|\boldsymbol{S} \bm{v}^m\right\|^{2} +\frac{\nu_{*}}{8 C_{3}}(4-a_1)\left\|\partial_{t} \bm{v}^m\right\|^{2} +\frac{a_1}{4}\left\|\nabla \partial_{t} \varphi^m\right\|^{2} +\frac12\|\Delta(\sigma^m-\chi\varphi^m)\|^2 \notag\\
&\quad
\leq C \big(1+\|\nabla \bm{v}^m\|^2+\|\nabla \mu^m\|^2+ \|\nabla(\sigma^m-\chi\varphi^m)\|^2+\gamma\|\partial_t \varphi^m \|^2\big)^5\notag\\
&\qquad + \frac{\chi^2}{a_1}\|\nabla(\sigma^m-\chi\varphi^m)\|^2, \label{lm}
\end{align}
where
\begin{align}
\Lambda_m(t)& =\frac{1}{2}\left\|\nabla \bm{v}^m(t)\right\|^{2}
+\frac{a_1}{2}\left\|\nabla \mu^m(t)\right\|^{2}
+ \frac{a_1\gamma}{2}\|\partial_t \varphi^m(t)\|^2
\notag\\
&\quad
+\frac{1}{2} \|\nabla(\sigma^m(t)-\chi\varphi^m(t))\|^{2}
 +a_1\int_\Omega \big(\bm{v}^m(t)\cdot\nabla\varphi^m(t)\big)\mu^m(t)\,\mathrm{d}x \notag \\
&\quad
+ a_1 \alpha\big(\overline{\varphi^m}(t)-c_0\big)\int_\Omega \mu^m(t)\,\mathrm{d}x,
\label{h1}
\end{align}
and the positive constant  $C$ in \eqref{lm} depends on  $\Omega$, $\overline{\varphi_0}$, $\overline{\sigma_0}$, $\max_{r\in[-1,1]}|\varPsi(r)|$,  $\|\bm{v}_0\|$, $\|\varphi_0\|_{H^1}$, $\|\sigma_0\|$ and the coefficients of the system.

Let us choose the constant $a_1$ in the definition of $\Lambda_m$.
From \eqref{inif} and Poincar\'{e}'s inequality, we can see that
\begin{align}
\left|a_1\int_\Omega \big(\bm{v}^m\cdot\nabla\varphi^m\big)\mu^m\,\mathrm{d}x\right|
&=\left|a_1\int_\Omega \big(\bm{v}^m\cdot\nabla\mu^m\big)\varphi^m\,\mathrm{d}x\right|
  \notag\\
&\le a_1 C_\mathrm{P}\|\nabla\bm{v}^m\|\|\nabla \mu^m\|\|\varphi^m\|_{L^\infty}\notag\\
&\le \frac{a_1}{8}\|\nabla\mu^m\|^2+ 2a_1 C^2_\mathrm{P}\|\nabla\bm{v}^m\|^2,
\label{qqqb}
\end{align}
where $C_\mathrm{P}>0$ only depends on $\Omega$. Besides, by \eqref{q2} (with $a=1/16\alpha$ therein), we get
\begin{align}
& \left|a_1\alpha\big(\overline{\varphi^m}-c_0\big) \int_\Omega \mu^m\,\mathrm{d}x\right|\notag\\
& \quad \leq a_1 \alpha  e^{-\alpha t}|\overline{\varphi_{0,k}}-c_0| \left|\int_\Omega
\mu^m\,\mathrm{d}x\right| \notag\\
&\quad \leq \frac{a_1}{8}\left(\|\nabla\mu^m\|^2+\|\nabla(\sigma^m-\chi\varphi^m)\|^2+\gamma\|\partial_t \varphi^m \|^2\right) +C_5a_1\alpha  e^{-\alpha t}|\overline{\varphi_{0,k}}-c_0| ,\label{qqqa}
\end{align}
where $C_5$ depends on the coefficients of the system, $\Omega$, $\overline{\varphi_0}$, $\overline{\sigma_0}$, $\max_{r\in[-1,1]}|\varPsi(r)|$,  $\|\bm{v}_0\|$, $\|\varphi_0\|_{H^1}$ and $\|\sigma_0\|$. Set
\begin{align}
a_1 =\min\left\{\frac{1}{2},\ \frac{1}{8C_\mathrm{P}^2}\right\}>0.
\label{a1}
\end{align}
Then from the above estimates we see that
\begin{align}
\Lambda_m(t) &\geq \frac{1}{4}\left\|\nabla \bm{v}^m(t)\right\|^{2}
+\frac{a_1}{4}\left\|\nabla \mu^m(t)\right\|^{2}
+ \frac{a_1\gamma}{4}\|\partial_t \varphi^m\|^2 \notag \\
&\quad +\frac{1}{4} \|\nabla(\sigma^m(t)-\chi\varphi^m(t))\|^{2}-\widetilde{C}_5,\label{m3b}\\
\Lambda_m(t) & \leq \frac34\left\|\nabla \bm{v}^m(t)\right\|^{2} +\frac{3a_1}{4}\left\|\nabla \mu^m(t)\right\|^{2}
+ \frac{3a_1\gamma}{4}\|\partial_t \varphi^m\|^2 \notag  \\
&\quad +\frac{3}{4} \|\nabla(\sigma^m(t)-\chi\varphi^m(t))\|^{2}+\widetilde{C}_5,\label{m3}
\end{align}
with the constant $\widetilde{C}_5=C_5\alpha$. Therefore, exploiting \eqref{lm}--\eqref{m3}, we are led to
\begin{align}
&\frac{\mathrm{d}}{\mathrm{d} t} \widetilde{\Lambda}_m(t) +\frac{\nu_{*}}{2}\left\|\boldsymbol{S} \bm{v}^m\right\|^{2} +\frac{\nu_{*}}{4 C_{3}} \left\|\partial_{t} \bm{v}^m\right\|^{2} +\frac{ a_1}{4}\left\|\nabla \partial_{t} \varphi^m\right\|^{2} +\frac12\|\Delta(\sigma^m-\chi\varphi^m)\|^2\notag\\
&\quad \leq C_6\big[\widetilde{\Lambda}_m(t)\big]^5,
 \label{m5}
\end{align}
where $\widetilde{\Lambda}_m(t)= \Lambda_m(t)+2\widetilde{C}_5$ and the constant $C_6>0$ depends on the coefficients of the system, $\Omega$, $\overline{\varphi_0}$, $\overline{\sigma_0}$, $\max_{r\in[-1,1]}|\varPsi(r)|$,  $\|\bm{v}_0\|$, $\|\varphi_0\|_{H^1}$ and $\|\sigma_0\|$.

The differential inequality  \eqref{m5} yields that for any $\widetilde{T}>0$ satisfing
\begin{equation}
1-4C_6\widetilde{T}\big[\widetilde{\Lambda}_m (0)\big]^4>0,\label{m5bb}
\end{equation}
it holds
\be
0<\widetilde{\Lambda}_m(t)\leq \frac{\widetilde{\Lambda}_m (0)}{\big(1-4C_6t\big[\widetilde{\Lambda}_m (0)\big]^4\big)^\frac14},\qquad \forall\, t\in [0,\widetilde{T}].\label{m5b}
\ee
Using  \eqref{sapp1}$_2$, \eqref{sapp1}$_3$  and integration by parts, we find that
\begin{align*}
\gamma\|\partial_t \varphi^m\|^2
& = \int_\Omega \mu^m\partial_t \varphi^m\,\mathrm{d}x- \int_\Omega \big(-\Delta \varphi^m+\varPsi'(\varphi^m)-\chi \sigma^m+\beta\mathcal{N}(\varphi^m-\varphi^m)\big)\partial_t \varphi^m\,\mathrm{d}x
\\
& = -\|\nabla \mu^m\|^2+ \int_\Omega \nabla \mu^m \cdot (\varphi^m\bm{v}^m) \,\mathrm{d}x   -  \alpha (\overline{\varphi^m}-c_0)\int_\Omega \gamma\partial_t \varphi^m\,\mathrm{d}x\\
&\quad +  \int_\Omega \nabla \big(-\Delta \varphi^m+\varPsi'(\varphi^m)\big)\cdot(\nabla \mu^m-\varphi^m\bm{v}^m)\,\mathrm{d}x\\
&\quad + \int_\Omega \nabla \big(-\chi \sigma^m+\beta\mathcal{N}(\varphi^m-\varphi^m)\big)\cdot(\nabla \mu^m-\varphi^m\bm{v}^m)\,\mathrm{d}x.
\end{align*}
Thus, by continuity, \eqref{conver1}, H\"{o}lder's inequality and Poincar\'{e}'s inequality,  we have
\begin{align*}
& \gamma\|\partial_t \varphi^m(0)\|^2 + \|\nabla \mu^m(0) \|^2\\
&\quad = \int_\Omega \nabla \mu^m(0) \cdot (\varphi_{0,k}\bm{P}_{\bm{H}_{m}} \bm{v}_0) \,\mathrm{d}x
+\gamma \alpha^2 |\Omega| (\overline{\varphi_{0,k}}-c_0)^2\\
&\qquad  +  \int_\Omega \nabla \big(-\Delta \varphi_{0,k}+ \varPsi'(\varphi_{0,k})\big)\cdot\big(\nabla \mu^m(0)-\varphi_{0,k}\bm{P}_{\bm{H}_{m}} \bm{v}_0\big)\,\mathrm{d}x\\
&\qquad + \int_\Omega \nabla \big(-\chi \sigma_{0,k}+\beta\mathcal{N}(\varphi_{0,k} -\overline{\varphi_{0,k}})\big)\cdot\big(\nabla \mu^m(0)-\varphi_{0,k}\bm{P}_{\bm{H}_{m}} \bm{v}_0\big)\,\mathrm{d}x\\
&\quad \leq \frac12\|\nabla \mu^m(0) \|^2+  C\big(1+\|\bm{v}_0\|_{\bm{H}^1}^2+\|\widetilde{\mu}_0\|_{H^1}^2 +\|\varphi_0\|_{H^1}^2 +\|\sigma_0\|_{H^1}^2\big),
\end{align*}
which implies
\begin{align}
\widetilde{\Lambda}_m(0)
& =\frac{1}{2}\left\|\nabla \bm{P}_{\bm{H}_{m}} \bm{v}_{0}\right\|^{2}
+\frac{a_1}{2}\left\|\nabla \mu^m(0)\right\|^{2}
+ \frac{a_1\gamma}{2}\|\partial_t \varphi^m(0)\|^2 \notag\\
&\quad
+\frac{1}{2} \|\nabla(\sigma_{0,k}-\chi\varphi_{0,k})\|^{2}
 -a_1\int_\Omega \big( \varphi_{0,k}\bm{P}_{\bm{H}_{m}} \bm{v}_{0}\big)\cdot\nabla \mu^m(0)\,\mathrm{d}x \notag \\
&\quad
+ a_1 \alpha\big(\overline{\varphi_{0,k}}-c_0\big)\int_\Omega \mu^m(0)\,\mathrm{d}x+2\widetilde{C}_5\notag\\
&\leq C\big(1+ \|\bm{v}_{0}\|_{\bm{H}^1}^2
+ \|\widetilde{\mu}_{0}\|_{H^1}^2 + \|\varphi_{0}\|_{H^1}^2 +\|\sigma_0\|_{H^1}^2\big)+3\widetilde{C}_5\notag\\
&=:\widetilde{\Lambda}_0,
\label{h1-0}
\end{align}
such that $\widetilde{\Lambda}_0>0$ is independent of $\gamma$, $m$ and $k$ (for $k\geq \widehat{k}$).

Therefore, setting
$$
T_0=\frac{1}{5C_6(\widetilde{\Lambda}_0)^4}>0,
$$
which satisfies \eqref{m5bb} and is independent of $\gamma$, $m$ as well as $k$ (for $k\geq \widehat{k}$), we can deduce from \eqref{m5b} and \eqref{h1-0} that
\be
\widetilde{\Lambda}_m(t)\leq \frac{\widetilde{\Lambda}_0}{\big[1-4C_6t(\widetilde{\Lambda}_0)^4\big]^\frac14}, \qquad \forall\, t\in [0,T_0].
\label{m5c}
\ee
Indeed, the uniform estimates \eqref{low-es2} and \eqref{m5c} enable us to prove the existence of a unique local strong solution $(\bm{v}^{m},\varphi^{m},\mu^{m},\sigma^{m})$ on the uniform interval $[0,T_0]$ instead of $[0,T_m]$ (cf. Proposition \ref{exe-app}) for any given parameters $\gamma>0$, $m\geq 1$ and $k\geq \widehat{k}$. Moreover, thanks to \eqref{low-es2} and \eqref{m3b}, it holds
\begin{align}
& \sup_{t \in [0,T_0]} \big(\left\|\nabla \bm{v}^m(t)\right\|+\left\|\nabla \mu^m(t)\right\|+
\sqrt{\gamma}  \|\partial_t\varphi^m(t)\| + \|\nabla\sigma^m(t)\| \big)\leq C,
\label{sv}
\end{align}
which together with \eqref{sapp1}$_3$,
\eqref{low-es2}, \eqref{muH1} and \eqref{H2es} yields
\begin{align}
\sup_{t \in [0,T_0]} \left(\|\varphi^m(t)\|_{H^2}+ \| \mu^m(t) \|_{H^1}+ \|\varPsi_0'(\varphi^m(t))\|\right)\leq C.\label{sv2}
\end{align}
Next, integrating \eqref{m5} on $[0,T_0]$ and using \eqref{sv}, \eqref{sv2}, we can obtain
\begin{align}
\int_{0}^{T_0}\left(\left\|\boldsymbol{S} \bm{v}^m(t)\right\|^{2}+\left\|\partial_{t} \bm{v}^m(t)\right\|^{2}+\left\|\nabla \partial_{t} \varphi^m(t)\right\|^{2}+\|\sigma^m (t)\|_{H^2}^2\right) \, \mathrm{d} t \leq C.\label{sv1}
\end{align}
The positive constant $C$ in \eqref{sv}--\eqref{sv1} depends on the coefficients of the system, $\Omega$, $\overline{\varphi_0}$, $\overline{\sigma_0}$, $\|\bm{v}_0\|_{\bm{H}^1}$, $\|\varphi_0\|_{H^1}$, $\|\widetilde{\mu}_0\|_{H^1}$, $\|\sigma_0\|_{H^1}$ and $\max_{r\in[-1,1]}|\varPsi(r)|$. From \eqref{sapp1}$_2$, we observe that
$$
\|\Delta \mu^m\|\leq \|\partial_t\varphi^m\|+\|\bm{v}^m\|_{\bm{L}^3}\|\nabla\varphi^m\|_{\bm{L}^6} +\alpha\|\overline{\varphi^m}-c_0\|,
$$
and
\begin{align*}
\|\nabla \Delta \mu^m\|^2& =\int_\Omega \nabla \partial_t\varphi^m\cdot\nabla \Delta \mu^m\,\mathrm{d}x +\int_\Omega \nabla (\bm{v}^m\cdot \nabla \varphi^m) \cdot\nabla \Delta \mu^m\,\mathrm{d}x\\
&\leq \frac12 \|\nabla \Delta \mu^m\|^2 +C\|\nabla \partial_t\varphi^m\|^2+ C\|\nabla \bm{v}^m\|_{\bm{L}^3}^2\|\nabla \varphi^m\|_{\bm{L}^6}^2+ C\|\bm{v}^m\|_{\bm{L}^\infty}^2\|\varphi^m\|_{H^2}^2.
\end{align*}
Hence, it follows from \eqref{low-es3}, \eqref{sv}--\eqref{sv1} and the Sobolev embedding theorem that
\begin{align}
&\sup_{t\in[0,T_0]}\|\mu^m(t)\|_{H^2}\leq C \left(1+\frac{1}{\sqrt{\gamma}}\right),\qquad \int_{0}^{T_{0}}\left\|\mu^m(t)\right\|_{H^3}^{2}  \, \mathrm{d}t \leq C,\label{v}
\end{align}
where $C>0$ is independent of $\gamma$, $m$ and $k$ (for $k\geq \widehat{k}$).\medskip

\textbf{Fourth estimate: control of time derivatives}.
We have obtained the estimate for $\|\partial_t\bm{v}^m\|$ in \eqref{sv1}. Next, from \eqref{conver1}, \eqref{sv1} and the Poincar\'{e}--Wirtinger inequality, we see that
\begin{equation}
\left \|  \partial_{t}\varphi^m\right \|_{L^{2}(0,T_0;H^1(\Omega))} \le C.
\label{phimt1}
\end{equation}
Finally, a comparison in the equation for $\sigma^m$ yields
\begin{align}
\int_0^{T_0} \|\partial_t\sigma^m(t)\|^2\,\mathrm{d}t
&\leq C\int_0^{T_0} \big(\|\Delta\sigma^m(t)\|^2
+\|\Delta\varphi^m(t)\|^2 + \|\bm{v}^m(t)\|_{\bm{L}^6}^2 \|\nabla\sigma^m(t)\|_{\bm{L}^3}^2\big)\,\mathrm{d}t\notag \\
&\leq C\int_0^{T_0}\big(\|\sigma^m(t)\|_{H^2}
+\|\varphi^m(t)\|_{H^2}^2+ \|\bm{v}^m(t)\|_{\bm{H}^1}^2 \|\sigma^m(t)\|_{H^2}^2\big)\,\mathrm{d}t\notag\\
&\leq C,\label{vsigmt2d}
\end{align}
where the positive constant $C$ in \eqref{phimt1} and \eqref{vsigmt2d} depends on the coefficients of the system, $\Omega$, $\overline{\varphi_0}$, $\overline{\sigma_0}$, $\|\bm{v}_0\|_{\bm{H}^1}$, $\|\varphi_0\|_{H^1}$, $\|\widetilde{\mu}_0\|_{H^1}$, $\|\sigma_0\|_{H^1}$, $\max_{r\in[-1,1]}|\varPsi(r)|$,
but is independent of $\gamma$, $m$ and $k$ (for $k\geq \widehat{k}$). \medskip

\textbf{Fifth estimate: separation property for $\varphi^m$}.
It follows from   \eqref{sv}, \eqref{sv2}  that
\begin{align}
\|\Delta \sigma^m(t)\|
&\leq
\|\partial_t\sigma^m(t)\|
+ |\chi|\|\Delta\varphi^m(t)\| +  \|\bm{v}^m(t)\|_{\bm{L}^6} \|\nabla\sigma^m(t)\|_{\bm{L}^3}\notag \\
&\leq \|\partial_t\sigma^m(t)\|
+ |\chi|\|\Delta\varphi^m(t)\| +  C \|\Delta \sigma^m(t)\|^\frac12\|\nabla \sigma^m(t)\|^\frac12 \notag \\
&\leq \frac12\|\Delta \sigma^m(t)\|+ \|\partial_t\sigma^m(t)\|
+C.
\label{sigmH2a}
\end{align}
Next, differentiating \eqref{sapp1}$_4$ with respect to time, multiplying the resultant by
$\partial_t \sigma^m$ and integrating over $\Omega$, we infer from \eqref{low-es2}, \eqref{sv} and \eqref{sigmH2a} that
\begin{align}
&\frac12 \frac{\mathrm{d}}{\mathrm{d}t} \|\partial_t \sigma^m\|^2 + \|\nabla \partial_t \sigma^m\|^2\notag\\
&\quad = - \int_\Omega (\partial_t \bm{v}^m\cdot \nabla \sigma^m) \partial_t \sigma^m\,\mathrm{d}x
- \int_\Omega  (\bm{v}^m \cdot \nabla \partial_t \sigma^m ) \partial_t \sigma^m\,\mathrm{d}x
+ \chi \int_\Omega
\nabla \partial_t  \varphi^m\cdot \nabla \partial_t  \sigma^m\,\mathrm{d}x\notag\\
& \quad \leq \|\partial_t \bm{v}^m\|\| \nabla (\sigma^m- \overline{\sigma^m}) \|_{\bm{L}^3}\|\partial_t (\sigma^m-\overline{\sigma^m})\|_{L^6}
+|\chi|\|\nabla \partial_t  \varphi^m\|\| \nabla \partial_t  \sigma^m\|\notag\\
 &\quad \leq \frac12\|\nabla \partial_t  \sigma^m\|^2
 + C\|\partial_t  \bm{v}^m\|^2 \|\nabla \sigma^m\|\|\Delta \sigma^m\|
  +\chi^2\|\nabla \partial_t\varphi^m\|^2\notag\\
 &\quad \leq  \frac12\|\nabla \partial_t \sigma^m\|^2
 + C\|\partial_t \bm{v}^m\|^2 \|\partial_t\sigma^m\|^2 +C\big( \|\partial_t \bm{v}^m\|^2+ \|\nabla \partial_t\varphi^m\|^2).
\label{sigmH2}
\end{align}
By continuity, we have
\begin{align*}
\|\partial_t\sigma^m(0)\|
&\leq C\big(\|\Delta\sigma_{0,k}\|
+\|\Delta\varphi_{0,k}\|+ \|\bm{P}_{\bm{H}_{m}}\bm{v}_0\|_{\bm{L}^3} \|\nabla\sigma_{0,k}\|_{\bm{L}^6}\big)\\
&\leq C\big(\|\sigma_{0,k}\|_{H^2}
+\|\varphi_{0,k}\|_{H^2}+ \|\bm{v}_0\|_{\bm{H}^1} \|\sigma_{0,k}\|_{H^2}\big),
\end{align*}
which depends on $k$ but is independent of $m$, $\gamma$.
Then it follows from \eqref{sv1}, \eqref{sigmH2} and Gronwall's lemma that
\begin{align}
\|\partial_t\sigma^m(t)\|^2 +\int_0^t\|\nabla \partial_t\sigma^m(\tau)\|^2\mathrm{d}\tau \leq C(k),\quad \forall\, t\in [0,T_0],\label{sigtL2}
\end{align}
where $C(k)>0$ depends on $k$, but is independent of $m$ and $\gamma$. This estimate combined with \eqref{low-es2} and \eqref{sigmH2a} implies that
\begin{align}
\sup_{t\in[0,T_0]} \|\sigma^m(t)\|_{H^2}\leq \widetilde{C}(k).\label{sigmLi}
\end{align}
Thanks to the $H^2$-estimates \eqref{v}, \eqref{sigmLi} for $\mu^m$ and $\sigma^m$, we can check the arguments in \cite{Gio2022,H2,MZ04} and find that
the following strict separation property
\begin{align}
 |\varphi^m(x,t)|<1-\delta(\gamma,k),\quad \forall\,(x,t)\in \overline{\Omega}\times[0,T_0],\label{sep}
\end{align}
holds with some constant $\delta(\gamma, k)\in (0,1)$ depending on $\gamma$ as well as $k$, but it is independent of the parameter $m$ for the semi-Galerkin approximation scheme. \medskip

\subsection{Proof of Theorem \ref{ls}}
\textbf{Existence.}
Thanks to the uniform estimates \eqref{sv}--\eqref{vsigmt2d}, we are able to pass to the limit in the approximate problem \eqref{sapp1}--\eqref{sapp2}, first as $m\to +\infty$, then $\gamma\to 0^+$ and finally $k\to +\infty$. The procedure follows a standard compactness argument (cf. \cite{Gio2022}) and thus its details are omitted here. Up to a subsequence, we can find a set of limit functions   $(\bm{v}, \varphi, \mu, \sigma)$ that
satisfies the equations
 \begin{equation}
  \begin{cases}
  (\partial_t  \bm{ v},\bm{w})+( \bm{ v} \cdot \nabla  \bm {v},\bm{ w})-\big( \mathrm{div}(2\nu(\varphi) D\bm{v}),  \bm{w}\big)  =\big((\mu+\chi \sigma)\nabla \varphi,\bm {w}\big), \\
  \qquad \qquad \qquad \qquad \qquad \ \,\text{for all } \bm{w} \in \bm{L}^2_{0,\mathrm{div}}(\Omega)\ \text{and almost all } t\in (0,T_0),\\
  \partial_t \varphi+ \bm{v} \cdot \nabla \varphi =\Delta  \mu-\alpha(\overline{\varphi}-c_0), \quad  \qquad \qquad \qquad \ \ \text{a.e. in }\ \ \Omega\times(0,T_0),\\
  \mu=-\Delta \varphi +\varPsi'(\varphi) -\chi \sigma+\beta\mathcal{N}(\varphi-\overline{\varphi}),
   \qquad \qquad \ \ \     \text{a.e. in }\ \ \Omega\times(0,T_0),\\
  \partial_t \sigma+\bm{v} \cdot \nabla \sigma =\Delta (\sigma-\chi\varphi),  \qquad\qquad\qquad\quad\quad\quad \ \ \ \text{a.e. in }\ \ \Omega\times(0,T_0),
  \end{cases}
  \notag
  \end{equation}
  as well as the boundary and initial conditions
  \begin{equation}
  \begin{cases}
  \bm{v}=\mathbf{0},\quad {\partial}_{\bm{n}}\varphi ={\partial}_{\bm{n}}\mu={\partial}_{\bm{n}}\sigma=0, \qquad\qquad\quad \,     \textrm{a.e on} \   \partial\Omega\times(0,T_0),
  \\
  \bm{v}|_{t=0}=\bm{v}_{0},\quad \varphi|_{t=0}=\varphi_{0},\quad  \sigma|_{t=0}=\sigma_{0},\qquad\ \text{a.e. in}\ \Omega,
  \end{cases}
  \notag
  \end{equation}
with the following regularity properties
 \begin{align*}
 &\boldsymbol{v} \in L^\infty\big(0,T_0 ;\bm{H}^1_{0,\mathrm{div}}(\Omega) \big) \cap L^{2}\big(0,T_0 ; \bm{H}^2(\Omega)\big) \cap H^{1}\big(0,T_0 ; \bm{L}^2_{0,\mathrm{div}}(\Omega)\big), \\
 &\varphi \in L^\infty\big(0,T_0 ; H^2(\Omega)\big)\cap  H^1\big(0,T_0 ; H^{1}(\Omega)\big), \\
 & \varphi\in L^\infty(\Omega\times(0,T_0)):\ \ |\varphi(x,t)|<1\ \ \text{a.e. in}\ \ \Omega \times(0,T_0),\\
 &\mu \in L^\infty\big(0,T_0; H^{1}(\Omega)\big) \cap L^{2}\big(0,T_0 ; H^{3}(\Omega)\big), \\
 	&\sigma \in L^\infty\big(0,T_0; H^{1}(\Omega)\big) \cap L^{2}\big(0,T_0 ; H^{2}(\Omega)\big) \cap H^{1}\big(0,T_0 ; L^{2}(\Omega)\big).
 \end{align*}

From the regularity theory for the Cahn--Hilliard equation with singular potential (see e.g., \cite{A2009,GGW,GMT}), we can deduce that
$\varphi \in L^\infty\big(0,T_0;W^{2,6}(\Omega)\big)$ and  $\varPsi'(\varphi)\in L^\infty\big(0,T_0;L^6(\Omega)\big)$.
By comparison in the $\varphi$-equation, we also get $\partial_t \varphi \in L^{\infty}\big(0,T_0 ; (H^{1}(\Omega))'\big)$. From the regularity of time derivatives of $\bm{v}$, $\sigma$ and the interpolation theory \cite{simon}, it follows that $\boldsymbol{v} \in C\big([0,T_0];\bm{H}^1_{0,\mathrm{div}}(\Omega) \big)$ and $\sigma\in C\big([0,T_0]; H^{1}(\Omega)\big)$. Moreover, by H\"{o}lder's inequality, we have
$(\mu+\chi \sigma)\nabla \varphi \in L^2(0,T_0;\bm{L}^2(\Omega))$ and $\bm{ v} \cdot \nabla  \bm {v}\in  L^2(0,T_0;\bm{L}^2(\Omega))$. Then applying the classical theory for the Navier--Stokes equations (see e.g., \cite{S}) and the regularity theory for the Stokes operator with variable viscosity (see e.g., \cite[Lemma 4]{A2009}), we can find a unique function $p\in L^2\big(0,T_0;V_0\big)$ such that
$$
\nabla p=-\partial_t  \bm{ v}-\bm{ v} \cdot \nabla  \bm {v}+\mathrm{div} \big(2  \nu(\varphi) D\bm{v} \big) +(\mu+\chi \sigma)\nabla \varphi,\quad \text{a.e. in}\ \ \Omega\times(0,T_0).
$$

\textbf{Uniqueness}. Let $(\bm{v}_{1},\varphi_{1},\mu_1,\sigma_{1})$ and $(\bm{v}_{2},\varphi_{2},\mu_2,\sigma_{2})$ be two strong solutions subject to the initial data $(\bm{v}_{01},\sigma_{01},\varphi_{01})$ and $(\bm{v}_{02},\sigma_{02},\varphi_{02})$, respectively.   We denote their differences by
$$
(\bm{v},\varphi,\mu,\sigma)=(\bm{v}_{1}-\bm{v}_{2}, \varphi_{1}-\varphi_{2},\mu_1-\mu_2,\sigma_{1}-\sigma_{2}).
$$
The regularity properties of strong solutions obtained above allow us to apply the argument in \cite[Section 4]{H} (cf. also \cite[Section 5]{GMT} for the Navier--Stokes--Cahn--Hilliard system). Using the corresponding Sobolev embedding theorems in three dimensions, we can check the procedure in \cite[Section 4]{H} step by step and deduce that for the quantity
 \begin{align}
 W(t):=\|\nabla\bm{S}^{-1}\bm{v}(t)\|^2 +\|\varphi(t)\|_{(H^1)'}^2+\|\sigma(t)\|_{(H^1)'}^2 +|\overline{\varphi}(t)|,
 \label{W}
 \end{align}
the following differential inequality holds
\begin{align}
\frac{\mathrm{d}}{\mathrm{d}t}W(t)+ \frac12\big( \nu_* \|\bm{v}\|^2 + \|\nabla\varphi\|^2 + \|\sigma\|^2\big)
\leq C W(t),\quad \forall\, t\in (0,T_0).
\label{uniA}
\end{align}
Applying Gronwall's lemma, we thus obtain the continuous dependence on initial data for local strong solutions
\begin{align}
W(t)\leq e^{Ct}W(0),\quad \forall\, t\in[0,T_0],\label{uniA1}
\end{align}
which easily yields the uniqueness result. We remark that higher order estimates for both two solutions are essentially used in the derivation of \eqref{uniA}. Theorefore, this argument cannot be used to prove a weak-strong uniqueness result.

We have thus proved the first part of Theorem \ref{ls}.
\medskip

\textbf{Separation from pure states and further regularity properties.}
Let us assume in addition that $\|\varphi_0\|_{L^\infty}=1-\delta_0$ for some $\delta_0\in (0,1)$. Since $\varphi \in L^{\infty}\big(0, T_0 ; H^{2}(\Omega)\big)$ and $\partial_{t} \varphi \in L^{2}\big(0, T_0 ; H^{1}(\Omega)\big)$, then it follows from the interpolation theory \cite{simon} that  $\varphi\in C\big([0,T_0];C(\overline{\Omega})\big)$ and thus $\|\varphi(t)\|_{C(\overline{\Omega})} = \|\varphi(t)\|_{L^\infty}$. On the other hand, for any $t_{1}, t_{2} \in\left[0, T_0\right]$, $t_1>t_2$, by Agmon's and H\"{o}lder's inequalities, we have
\begin{align}
\left\|\varphi\left(t_{1}\right)-\varphi\left(t_{2}\right)\right\|_{L^{\infty}}  \notag
&  \leq C\left\|\varphi\left(t_{1}\right)-\varphi\left(t_{2}\right)\right\|_{H^{2}}^{\frac{1}{2}}
\left\|\varphi\left(t_{1}\right)-\varphi\left(t_{2}\right)\right\|_{H^{1}}^{\frac{1}{2}}
\notag\\
&  \leq C\left(\left\|\varphi\left(t_{1}\right)\right\|_{H^{2}}^{\frac{1}{2}} +\left\|\varphi\left(t_{2}\right)\right\|_{H^{2}}^{\frac{1}{2}}\right) \left(\int_{t_{2}}^{t_{1}}\left\|\partial_{t} \varphi(t)\right\|_{H^{1}} \mathrm{d} t\right)^{\frac{1}{2}} \notag\\
&  \leq C_7\left|t_{1}-t_{2}\right|^{\frac{1}{4}},
\label{Li-distance}
\end{align}
where the constant $C_7>0$ depends on the estimates \eqref{sv2} and \eqref{sv1}.
Thanks to the strict separation property of the initial datum $\varphi_0$, there must exist some $T^{*} \in\left(0, T_0\right]$ such that
\begin{equation}
\left\|\varphi(t)\right\|_{L^\infty} \leq 1-\frac{1}{2} \delta_{0}, \quad \forall\, t \in\left[0, T^{*} \right].
\label{sepa}
\end{equation}
Indeed, we can take
$$
T^{*}=\sup_{t\in[0,T_0]}
\left\{\left\|\varphi(s)\right\|_{L^\infty} < 1-\frac{1}{2}\delta_{0},\ \ \forall\,s\in [0,t)\right\}.
$$
By continuity, it holds $T^*>0$. If $T^*=T_0$, then the conclusion \eqref{sepa} holds. If $T^*\in (0,T_0)$, we infer from the definition that
$\|\varphi\left(T^*\right)\|_{L^\infty}=1 - \delta_{0}/2$. On the other hand, it follows from \eqref{Li-distance} that
$$
C_7 |T^*|^{\frac{1}{4}}\geq \|\varphi\left(T^*\right)-\varphi_0\|_{L^{\infty}} \geq
\|\varphi\left(T^*\right)\|_{L^{\infty}}-\|\varphi_0\|_{L^{\infty}} =\frac{1}{2}\delta_0,
$$
which implies
\begin{align}
T^*\geq \left(\frac{\delta_0}{2C_7}\right)^4.\label{Tss}
\end{align}
Therefore, $T^{*}$ has a strictly positive lower bound that depends on the coefficients of the system, $\Omega$, $\overline{\varphi_0}$, $\overline{\sigma_0}$, $\|\bm{v}_0\|_{\bm{H}^1}$, $\|\varphi_0\|_{H^1}$, $\|\widetilde{\mu}_0\|_{H^1}$, $\|\sigma_0\|_{H^1}$, $\max_{r\in[-1,1]}|\varPsi(r)|$ and $\delta_0$.

From (H2), \eqref{f4.d}, \eqref{sv2}, the strict separation property \eqref{sepa} and the elliptic estimate, we can deduce that
\begin{align}
	\left\|\varphi(t)\right\|_{L^\infty(0,T^*;H^{3}(\Omega))}
	&  \leq C \mathop{\mathrm{ess\ sup}}_{t\in[0,T^*]} \left(\left\|\varPsi^{\prime}\left(\varphi(t)\right)\right\|_{H^{1}} +\left\|\mu(t)\right\|_{H^{1}}+\left\|\sigma(t)\right\|_{H^{1}}+\left\|\varphi(t)\right\|_{H^{1}}\right) \notag\\
& \leq C.\notag
\end{align}
Thus, we have $\varphi \in L^{\infty}\big(0, T^*; H^{3}(\Omega)\big)$.
Besides, in view of \eqref{f4.d}, we infer from (H4), \eqref{sv1}, \eqref{v} and \eqref{sepa} that $\varphi \in L^{2}\big(0, T^* ; H^{4}(\Omega)\big)$ and $\mu \in H^{1}\big(0, T^* ;(H^1(\Omega))'\big)$. By the interpolation theory \cite{simon}, we further obtain $\varphi \in C\left([0,T^{*}] ; H^{2}(\Omega)\right)$ and $\mu \in C\left([0,T^{*}]; H^{1}(\Omega)\right)$.
 Then for any $0\leq t_1\leq t_2\leq T^*$, we can deduce from (H2) and \eqref{sepa} that
\begin{align}
&\|\varPsi'(\vp(t_1))-\varPsi'(\vp(t_2))\|_{H^1}\notag \\
&\quad  \leq \int_0^1 \|\varPsi''(s\vp(t_1)+(1-s)\vp(t_2))(\vp(t_1)-\vp(t_2))\|_{H^1}\,\mathrm{d}s\notag\\
&\quad \leq \max_{r\in[-1+\frac{\delta_0}{2},1-\frac{\delta_0}{2}]}|\varPsi''(r)|\|\vp(t_1)-\vp(t_2)\|_{H^1}\notag\\
&\qquad + \max_{r\in[-1+\frac{\delta_0}{2},1-\frac{\delta_0}{2}]}|\varPsi^{(3)}(r)| \big(\|\nabla \vp(t_1)\|_{\bm{L}^4} +\|\nabla \vp(t_2)\|_{\bm{L}^4}\big)\|\vp(t_1)-\vp(t_2)\|_{L^4}\notag\\
&\quad \leq C \|\vp(t_1)-\vp(t_2)\|_{H^1}.
\end{align}
This yields $\varPsi'(\vp)\in C([0,T^*];H^1(\Omega))$. Then by comparison in the equation \eqref{f4.d} and using the continuity properties of $\mu$ and $\sigma$, we find that $\Delta \vp \in C([0,T^*];H^1(\Omega))$ as well. Finally, from the elliptic estimate for the Neumann problem, we can deduce that $\vp\in C([0,T^*];H^3(\Omega))$.

The proof of Theorem \ref{ls} is complete.
\hfill $\square$

\section{The Stationary Problem and \L ojasiewicz--Simon Inequality}\label{sec:stapro}
\setcounter{equation}{0}
In this section, we first prove Proposition \ref{prop-le2} on the existence and regularity of energy minimizers of the free energy $\mathcal{F}(\varphi, \sigma)$. Next, we derive an extended {\L}ojasiewicz--Simon type inequality for the coupled system. The assumption (H3) on the analyticity of the potential function $\varPsi$ plays an essential role in the proof.

\subsection{Proof of Proposition \ref{prop-le2}}

\textbf{Part 1. Existence.} The proof for the existence of an energy  minimizer can be done by applying the direct method of the calculus of variations. Using (H2) and an estimate similar to \eqref{couple1}, we find that $\mathcal{F}(\varphi,\sigma)$ (recall \eqref{fe}) is bounded from below on $\mathcal{Z}_{m_1,m_2}$ with a lower bound only depending on  $\chi$, $\beta$, $\theta_0$, $\varPsi_0$ and $\Omega$. On the other hand, it is easy to check that  $\mathcal{F}(\varphi,\sigma)\to+\infty$ if $\|(\varphi,\sigma)\|_{H^1\times L^2}\to+\infty$. As a consequence, $\mathcal{F}$ has a bounded minimizing sequence $\{(\varphi_j,\sigma_j)\}_{j\in \mathbb{Z}^+} \subset \mathcal{Z}_{m_1,m_2}$  such that
$$
\mathcal{F}(\varphi_j,\sigma_j)\to  \inf_{(\varphi,\sigma)\in  \mathcal{Z}_{m_1,m_2}} \mathcal{F}(\varphi,\sigma),\quad \text{as}\ j\to+\infty.
$$
Hence, there exists a pair $(\varphi_*,\sigma_*)\in  \mathcal{Z}_{m_1,m_2}$  such that $(\varphi_j,\sigma_j)\rightharpoonup (\varphi_*,\sigma_*)$ weakly in $H^1(\Omega)\times L^2(\Omega)$ as
$j\to +\infty$ along a non-relabeled subsequence.
From  \eqref{fe}, (H2), the weak lower
semi-continuity of norms and \cite[Lemma 4.1]{A2007}, we find that
\be
	\mathcal{F}_{\mathrm{conv}}(\varphi,\sigma)
\triangleq\int_{\Omega} \left(\frac{1}{2}|\nabla \varphi|^2 + \varPsi_0(\varphi)+\frac{1}{2}|\sigma|^2
\right) \mathrm{d}x,\label{fe-1}
	\ee
is a proper, lower semi-continuous and convex functional on $\mathcal{Z}_{m_1,m_2}$. Besides, it follows from the compact embedding $H^1(\Omega)\hookrightarrow\hookrightarrow L^2(\Omega)$ that $\varphi_j\to \varphi_*$ strongly in $L^2(\Omega)$ as $j\to +\infty$. Therefore, as $j\to +\infty$, the remaining (possibly non-convex) part of $\mathcal{F}$ satisfies
\begin{align*}
\left|\chi\int_{\Omega} (\sigma_j\varphi_j-\sigma_*\varphi_*)\,\mathrm{d}x \right|
& \leq |\chi|\| \sigma_j\|\|\varphi_j-\varphi_*\| +|\chi|\left|\int_\Omega \varphi_*(\sigma_j-\sigma_*)\,\mathrm{d}x\right|\to 0,
\end{align*}
$$
\left|\frac{\theta_0}{2}\int_{\Omega} (\varphi_j^2-\varphi_*^2)\,\mathrm{d}x\right|
\leq \frac{\theta_0}{2}\big(\|\varphi_j\|+\|\varphi_*\|\big) \|\varphi_j-\varphi_*\|\to 0,
$$
$$
\left|\frac{\beta}{2}\int_\Omega \big(|\nabla\mathcal{N}(\varphi_j-\overline{\varphi_j})|^2 -|\nabla\mathcal{N}(\varphi_*-\overline{\varphi_*})|^2\big)\mathrm{d}x\right|
\leq   C|\beta| \big(\|\varphi_j\|+\|\varphi_*\|\big) \|\varphi_j-\varphi_*\|\to 0.
$$
As a consequence, we can conclude that
$$
\mathcal{F}(\varphi_*,\sigma_*)
\leq \liminf_{j\to+\infty}\mathcal{F}(\varphi_j,\sigma_j)
=
\inf_{(\varphi,\sigma)\in  \mathcal{Z}_{m_1,m_2}} \mathcal{F}(\varphi,\sigma).
$$
Namely, $\mathcal{F}$ attains its
minimum at $(\varphi_*,\sigma_*)$ in $\mathcal{Z}_{m_1,m_2}$. \medskip

\textbf{Part 2. Regularity.} The regularity of a minimizer can be derived by a dynamic approach (cf. \cite{MZ04,A2007} for the Cahn--Hilliard equation).
To this end, let us consider the following viscous Cahn--Hilliard--Oono system with chemotaxis:
\begin{subequations}
\begin{alignat}{3}
&\partial_t \varphi=\Delta \mu,\ &\textrm{in}\ \Omega\times(0,+\infty),\label{of1.a} \\
&\mu=\gamma\partial_t\varphi - \Delta \varphi+\varPsi'(\varphi)-\chi \sigma+ \beta\mathcal{N}(\varphi-\overline{\varphi}), \quad\quad &\textrm{in}\ \Omega\times(0,+\infty),\label{of4.d} \\
&\partial_t \sigma= \Delta (\sigma-\chi\varphi),\ &\textrm{in}\ \Omega\times(0,+\infty), \label{of2.b}
\end{alignat}
\end{subequations}
subject to the boundary and initial conditions
\begin{alignat}{3}
&{\partial}_{\bm{n}}\varphi={\partial}_{\bm{n}}\mu={\partial}_{\bm{n}}\sigma=0,\qquad\qquad &\textrm{on}& \   \partial\Omega\times(0,+\infty),\label{oboundary}\\
&\varphi|_{t=0}=\varphi_{0},\ \ \ \sigma|_{t=0}=\sigma_{0}, \qquad &\textrm{in}&\ \Omega.
\label{oini0}
\end{alignat}
In \eqref{of4.d}, $\gamma>0$ is a fixed constant for the additional viscous term in the chemical potential.
For any initial datum $(\varphi_0, \sigma_0)\in \mathcal{Z}_{m_1,m_2}$, it follows from \cite[Theorem 2.1]{H2} that problem \eqref{of1.a}--\eqref{oini0} admits a unique global weak solution $(\varphi,\sigma)$ on $[0,+\infty)$. Moreover, the weak solution generates a strongly continuous
semigroup $\mathcal{S}(t): \mathcal{Z}_{m_1,m_2}\to \mathcal{Z}_{m_1,m_2}$ such that $\mathcal{S}(t)(\varphi_0,\sigma_0) =(\varphi(t),\sigma(t))$ for all $t\geq 0$. Noticing that for global weak solutions, the free energy  $\mathcal{F}(\varphi,\sigma)$ satisfies the energy identity
\begin{align}
  \mathcal{F}(\varphi(t),\sigma(t))
+ \int_0^t\left(\|\nabla \mu(\tau)\|^2 +\|\nabla(\sigma-\chi\varphi)(\tau)\|^2 +\gamma\|\partial_t\varphi(\tau)\|^2\right) \mathrm{d}\tau= \mathcal{F}(\varphi_0,\sigma_0),
\label{Lya-1}
\end{align}
for all $t> 0$ and $\|\partial_t\sigma\|_{V_0'}=\|\nabla(\sigma-\chi\varphi)\|$, we can deduce that $\mathcal{F}(\varphi,\sigma)$ is a strict Lyapunov function.

Let $(\varphi_*,\sigma_*)$ be a local minimizer of $\mathcal{F}$ in $\mathcal{Z}_{m_1,m_2}$. We take the initial datum $(\varphi_0, \sigma_0)=(\varphi_*,\sigma_*)$ in \eqref{oini0}. Since the mapping $t \mapsto \mathcal{F}(\varphi(t), \sigma(t))$ is continuous, it holds $\mathcal{F}(\varphi(t),\sigma(t)) =\mathcal{F}(\varphi_*,\sigma_*)$ for all $t\geq 0$, which together with \eqref{Lya-1} yields that $\|\partial_t\varphi(t)\|=\|\partial_t\sigma(t)\|_{V_0'}=0$ for almost all $t>0$. Thus, $(\varphi_*,\sigma_*)$ is a stationary point to the evolution problem \eqref{of1.a}--\eqref{oini0}, that is, $\mathcal{S}(t)(\varphi_*,\sigma_*) =(\varphi_*,\sigma_*)$ for all $t\geq 0$. As a consequence, from the regularity property of global weak solutions to problem \eqref{of1.a}--\eqref{oini0} (see \cite[Corollary 2.1]{H2}), we can conclude that $(\varphi_{*},\sigma_{*})\in H^2_N(\Omega)\times H^2_N(\Omega)$ and the strict separation property \eqref{ps1s} holds.

The proof of Proposition \ref{prop-le2} is complete.
\hfill $\square$

\medskip
Since the energy functional $\mathcal{F}$ may be non-convex, the uniqueness of its minimizer is in general not expected.
On the other hand, the proof of Proposition \ref{prop-le2} implies that every energy minimizer of $\mathcal{F}$ in $\mathcal{Z}_{m_1,m_2}$ belongs to the $\omega$-limit set of itself. Thus, recalling \cite[Theorem 2.5]{H2} on the characterization of $\omega$-limit sets for problem \eqref{of1.a}--\eqref{oini0}, we can draw the following conclusion:
\begin{corollary}\label{sta-str}
Let the assumptions of Proposition \ref{prop-le2} be satisfied. Any local minimizer of $\mathcal{F}$ in $\mathcal{Z}_{m_1,m_2}$ is a strong solution to the following stationary problem
	\begin{subequations}
		\begin{alignat}{3}
		&-\Delta \varphi_\mathrm{s} +\varPsi^{\prime}(\varphi_\mathrm{s}) -\chi\sigma_\mathrm{s} +\beta\mathcal{N}(\varphi_\mathrm{s}-\overline{\varphi_\mathrm{s}}) =\overline{\varPsi^{\prime}(\varphi_\mathrm{s})}
		-\chi\overline{\sigma_\mathrm{s}},\quad
		&\ \text{in } \Omega,   \label{s5bchv}\\
		& \Delta (\sigma_\mathrm{s}-\chi\varphi_\mathrm{s})=0,
		&\ \text{in } \Omega,   \label{s5f2.b}  \\
		&\partial_{\bm{n}} \varphi_\mathrm{s} = \partial_{\bm{n}} \sigma_\mathrm{s}=0,
		&\ \text{on } \partial \Omega, \label{s5cchv}\\
		&\text{subject to the constraints}\quad \overline{\varphi_\mathrm{s}}=m_1, \quad   \overline{\sigma_\mathrm{s}}=m_2.
		& \label{s5dchv}
		\end{alignat}
	\end{subequations}
\end{corollary}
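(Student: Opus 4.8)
The plan is to exploit the regularity furnished by Proposition~\ref{prop-le2}(2): a local minimizer $(\varphi_*,\sigma_*)$ lies in $H^2_N(\Omega)\times H^2_N(\Omega)$ and, by the strict separation estimate \eqref{ps1s}, satisfies $\|\varphi_*\|_{C(\overline\Omega)}\le 1-\delta_*$ for some $\delta_*\in(0,1)$; in particular $\varPsi'(\varphi_*),\varPsi''(\varphi_*)\in L^\infty(\Omega)$, so all terms in $\mathcal{F}$ and in its first variation are well defined and differentiation under the integral sign is justified by dominated convergence. I would then derive the Euler--Lagrange system of $\mathcal{F}$ on the constraint manifold $\mathcal{Z}_{m_1,m_2}$ by testing with two independent families of admissible variations — one perturbing $\sigma$ with $\varphi$ frozen, one perturbing $\varphi$ with $\sigma$ frozen. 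The constraints \eqref{s5dchv} themselves are immediate from the definition of $\mathcal{Z}_{m_1,m_2}$ (with $m_1=\overline{\varphi_*}$ and $m_2=\overline{\sigma_*}$).

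For the $\sigma$-direction, fix $\zeta\in L^2_0(\Omega)$ and note that $(\varphi_*,\sigma_*+t\zeta)\in\mathcal{Z}_{m_1,m_2}$ for every $t\in\mathbb{R}$ and stays in the $\lambda_*$-ball around $(\varphi_*,\sigma_*)$ for $|t|$ small. Minimality then gives $\frac{\mathrm d}{\mathrm dt}\big|_{t=0}\mathcal{F}(\varphi_*,\sigma_*+t\zeta)=\int_\Omega(\sigma_*-\chi\varphi_*)\zeta\,\mathrm dx=0$; since $\zeta\in L^2_0(\Omega)$ is arbitrary, $\sigma_*-\chi\varphi_*$ is constant, equal to $\overline{\sigma_*}-\chi\overline{\varphi_*}=m_2-\chi m_1$. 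This yields \eqref{s5f2.b}, and the Neumann condition for $\sigma_*$ in \eqref{s5cchv} follows from that for $\varphi_*$. For the $\varphi$-direction, take $\eta\in H^1(\Omega)\cap L^\infty(\Omega)$ with $\overline\eta=0$; thanks to \eqref{ps1s} the pair $(\varphi_*+t\eta,\sigma_*)$ still lies in $\mathcal{Z}_{m_1,m_2}$ whenever $|t|<\delta_*/\|\eta\|_{L^\infty}$, and in the $\lambda_*$-ball for $|t|$ small. Differentiating at $t=0$, integrating by parts using $\partial_{\bm n}\varphi_*=0$, and invoking the symmetry \eqref{propN2} of $\mathcal{N}$, minimality gives
\[
\int_\Omega\big(-\Delta\varphi_*+\varPsi'(\varphi_*)-\chi\sigma_*+\beta\mathcal{N}(\varphi_*-m_1)\big)\eta\,\mathrm dx=0 .
\]
Since $H^1(\Omega)\cap L^\infty(\Omega)\cap L^2_0(\Omega)$ is dense in $L^2_0(\Omega)$ and the bracket lies in $L^2(\Omega)$ (as $\varphi_*\in H^2$, $\varPsi'(\varphi_*)\in L^\infty$, $\sigma_*\in L^2$, $\mathcal{N}(\varphi_*-m_1)\in H^2$), the bracket must be constant; integrating over $\Omega$ and using the Neumann condition together with $\overline{\mathcal{N}(\varphi_*-m_1)}=0$ identifies the constant as $\overline{\varPsi'(\varphi_*)}-\chi\overline{\sigma_*}$, which is exactly \eqref{s5bchv}. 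A standard elliptic bootstrap (using $\varPsi''(\varphi_*)\in L^\infty$ and $\sigma_*=\chi\varphi_*+\mathrm{const}$) then upgrades $(\varphi_*,\sigma_*)$ to a strong solution.

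The delicate point is keeping the perturbed state inside $\mathcal{Z}_{m_1,m_2}$: the mean constraints force the test directions to have zero average, and the pointwise bound $\|\varphi\|_{L^\infty}\le 1$ is what obliges us to restrict to \emph{bounded} directions $\eta$ and then run a density argument to reach all of $L^2_0(\Omega)$ — here the strict separation \eqref{ps1s} is indispensable. An alternative and arguably shorter route reuses the machinery already set up in the proof of Proposition~\ref{prop-le2}: there it is shown that a local minimizer is a stationary point of the semigroup $\mathcal{S}(t)$ generated by the viscous Cahn--Hilliard--Oono system \eqref{of1.a}--\eqref{oini0}, hence it belongs to its own $\omega$-limit set, and \cite[Theorem~2.5]{H2} characterizes such $\omega$-limit sets as consisting of strong solutions of the stationary system \eqref{s5bchv}--\eqref{s5dchv}. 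Either argument closes the proof.
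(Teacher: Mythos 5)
Your main argument is correct, but it is a genuinely different route from the paper's. The paper disposes of this corollary in two lines: the proof of Proposition \ref{prop-le2} already shows that a local minimizer is a fixed point of the semigroup $\mathcal{S}(t)$ generated by the viscous Cahn--Hilliard--Oono system \eqref{of1.a}--\eqref{oini0}, hence belongs to its own $\omega$-limit set, and the characterization of such $\omega$-limit sets in \cite[Theorem 2.5]{H2} identifies it as a strong solution of \eqref{s5bchv}--\eqref{s5dchv}; this is exactly your ``alternative shorter route,'' so that part coincides with the paper. Your primary argument instead derives the Euler--Lagrange system directly by first variations on the constraint set $\mathcal{Z}_{m_1,m_2}$, and it goes through: the mean-zero $\sigma$-variations give $\sigma_*-\chi\varphi_*=\mathrm{const}$ (which is even stronger than \eqref{s5f2.b} and is the paper's \eqref{crit-2}); the mean-zero bounded $\varphi$-variations are admissible two-sidedly precisely because the strict separation \eqref{ps1s} makes the obstacle $\|\varphi\|_{L^\infty}\le 1$ inactive and lets you differentiate $\int_\Omega\varPsi(\varphi_*+t\eta)\,\mathrm{d}x$ by dominated convergence; the $H^2_N$ regularity from Proposition \ref{prop-le2}(2) supplies the Neumann conditions needed for the integration by parts and for identifying the Lagrange-multiplier constant via $\overline{\mathcal{N}(\varphi_*-m_1)}=0$; and the density of bounded zero-mean $H^1$ functions in $L^2_0(\Omega)$ closes the argument. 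What each approach buys: the paper's route is essentially free once the dynamic machinery and \cite[Theorem 2.5]{H2} are in place, but it leans on that external characterization; your variational route is self-contained modulo Proposition \ref{prop-le2}(2) (whose regularity and separation you legitimately quote, since the corollary is stated under its hypotheses), makes transparent exactly where the constraints $\overline{\varphi}=m_1$, $\overline{\sigma}=m_2$, $\|\varphi\|_{L^\infty}\le 1$ enter, and directly yields the reduced identity $\sigma_*-\chi\varphi_*=m_2-\chi m_1$ that the paper later re-derives as \eqref{crit-2}. The final ``elliptic bootstrap'' sentence is not even needed, since $H^2_N\times H^2_N$ membership is already given by Proposition \ref{prop-le2}(2) and all terms of the equations lie in $L^2(\Omega)$, so the system holds almost everywhere as required.
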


\subsection{An extended {\L}ojasiewicz--Simon inequality}

The main result of this section reads as follows.

\begin{theorem}[\L ojasiewicz--Simon inequality]
\label{LSmain}
Let $m_1\in (-1,1)$ and $m_2,\,\beta,\,\chi \in \mathbb{R}$ be given constants. Besides, we suppose that $\Omega$ is a bounded domain in $\mathbb{R}^3$ with boundary $\partial\Omega$ of class $C^3$ and the assumptions (H2) and (H3) are satisfied. Define the set
$$
\mathcal{X}_{m_1,m_2}=\big\{(\varphi,\sigma)\in \big(H^2_N(\Omega)\times H^2_N(\Omega)\big)\cap \mathcal{Z}_{m_1,m_2}\ \big|\ (\varphi,\sigma)\ \text{satisfies}\ \eqref{s5bchv}-\eqref{s5dchv}\big\}.
$$
For every $(\varphi_{\mathrm{s}},\sigma_{\mathrm{s}}) \in	\mathcal{X}_{m_1,m_2}$, there exist constants $\kappa\in(0,1/2)$, $\ell \in (0,1)$ and $C\geq 1$ such that
\begin{align}
|\mathcal{F}(\varphi,\sigma) -\mathcal{F}(\varphi_\mathrm{s},\sigma_\mathrm{s})|^{1-\kappa}
 & \leq C  \|-\Delta \varphi+ \varPsi'(\varphi)
- \chi(\sigma-\overline{\sigma})
+{\beta}\mathcal{N}(\varphi-\overline{\varphi})-\overline{ \varPsi'(\varphi)}\|\notag\\
&\quad +C  \|\sigma-\chi\varphi- \overline{\sigma -\chi\varphi}\| +C|\overline{\varphi}-m_1|^{1-\kappa},
\label{lojas}
\end{align}
for any $(\varphi,\sigma)\in H^2_N(\Omega)\times L^2(\Omega)$ satisfying $\overline{\sigma}=m_2$ and $\|\varphi-\varphi_{\mathrm{s}}\|_{H^2}+\|\sigma-\sigma_\mathrm{s}\|<\ell$.
Here, the constants $\kappa$, $\ell$ and $C$ depend on $\varphi_\mathrm{s}$, $\sigma_\mathrm{s}$, $m_1$, $m_2$, $\beta$, $\chi$ and $\Omega$, but not on $(\varphi, \sigma)$.
\end{theorem}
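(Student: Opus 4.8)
The plan is to recognize the right-hand side of \eqref{lojas} as, essentially, the $L^2$-norm of the \emph{constrained gradient} of $\mathcal{F}$ plus a mass-defect correction, and then to deduce the inequality from the abstract {\L}ojasiewicz--Simon theorem (cf. \cite{C2003,GG2006,LS83}) applied on the affine manifold of functions with prescribed means. First I would fix $(\varphi_{\mathrm{s}},\sigma_{\mathrm{s}})\in\mathcal{X}_{m_1,m_2}$: by Proposition \ref{prop-le2} and Corollary \ref{sta-str} it belongs to $H^2_N(\Omega)\times H^2_N(\Omega)$ and enjoys the strict separation $\|\varphi_{\mathrm{s}}\|_{C(\overline\Omega)}\le 1-\delta_*$ from \eqref{ps1s}. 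Using the embedding $H^2(\Omega)\hookrightarrow C(\overline\Omega)$ in three dimensions, I choose $\ell_0>0$ so small that $\|\varphi-\varphi_{\mathrm{s}}\|_{H^2}<\ell_0$ forces $\|\varphi\|_{C(\overline\Omega)}\le 1-\delta_*/2$; on the compact interval $[-1+\delta_*/2,1-\delta_*/2]$ the function $\varPsi$ is real-analytic with all derivatives bounded by (H3), which is exactly what makes the Nemytskii maps $\varphi\mapsto\varPsi(\varphi)$ and $\varphi\mapsto\varPsi'(\varphi)$ real-analytic from the $H^2$-neighbourhood of $\varphi_{\mathrm{s}}$ into $H^2(\Omega)$ and $L^2(\Omega)$ respectively, using that $H^2(\Omega)$ is a Banach algebra in dimension $\le 3$ (cf. \cite{A2007,RH99}).

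Next I would set up the functional-analytic framework. Put $V_0=\big(H^2_N(\Omega)\cap L^2_0(\Omega)\big)\times L^2_0(\Omega)$ and $H_0=L^2_0(\Omega)\times L^2_0(\Omega)$, and regard $E:=\mathcal{F}$ as a functional on the affine set $(\varphi_{\mathrm{s}},\sigma_{\mathrm{s}})+V_0$ (fixed means $m_1,m_2$). A short computation using \eqref{propN1}--\eqref{propN2} identifies the gradient as $E'(\varphi,\sigma)=(\mathcal{M}_1(\varphi,\sigma),\mathcal{M}_2(\varphi,\sigma))\in H_0$, where $\mathcal{M}_1=-\Delta\varphi+\varPsi'(\varphi)-\chi(\sigma-\overline\sigma)+\beta\mathcal{N}(\varphi-\overline\varphi)-\overline{\varPsi'(\varphi)}$ and $\mathcal{M}_2=\sigma-\chi\varphi-\overline{\sigma-\chi\varphi}$ are precisely the two quantities appearing in \eqref{lojas}. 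By the previous step $E$ is analytic near $(\varphi_{\mathrm{s}},\sigma_{\mathrm{s}})$ and $E'$ is analytic from a $V_0$-neighbourhood into $H_0$; moreover $E'(\varphi_{\mathrm{s}},\sigma_{\mathrm{s}})=0$, since $(\varphi_{\mathrm{s}},\sigma_{\mathrm{s}})$ solves \eqref{s5bchv}--\eqref{s5dchv} (equation \eqref{s5f2.b} together with the Neumann condition forces $\sigma_{\mathrm{s}}-\chi\varphi_{\mathrm{s}}$ to be constant, i.e.\ $\mathcal{M}_2(\varphi_{\mathrm{s}},\sigma_{\mathrm{s}})=0$, while \eqref{s5bchv} is exactly $\mathcal{M}_1(\varphi_{\mathrm{s}},\sigma_{\mathrm{s}})=0$). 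Finally I would check that the Hessian $L=E''(\varphi_{\mathrm{s}},\sigma_{\mathrm{s}})\in\mathcal{L}(V_0,H_0)$, which has the block form $L(\psi_1,\psi_2)=\big(-\Delta\psi_1+\Pi_0[\varPsi''(\varphi_{\mathrm{s}})\psi_1]+\beta\mathcal{N}\psi_1-\chi\psi_2,\ \psi_2-\chi\psi_1\big)$ with $\Pi_0$ the mean-zero projection, is Fredholm of index zero: its principal part $(\psi_1,\psi_2)\mapsto(-\Delta\psi_1-\chi\psi_2,\ \psi_2-\chi\psi_1)$ is Fredholm of index zero by a Schur-complement reduction (the $(2,2)$-block is the identity on $L^2_0$, so Fredholmness and index are inherited from the Schur complement $-\Delta-\chi^2$, which is self-adjoint with compact resolvent on $L^2_0$), whereas the remaining part $(\psi_1,\psi_2)\mapsto(\Pi_0[\varPsi''(\varphi_{\mathrm{s}})\psi_1]+\beta\mathcal{N}\psi_1,\,0)$ is compact from $V_0$ to $H_0$ because both $\psi_1\mapsto\varPsi''(\varphi_{\mathrm{s}})\psi_1$ (multiplication by an $L^\infty$ function, thanks to separation) and $\mathcal{N}$ factor through the compact embedding $H^2(\Omega)\hookrightarrow\hookrightarrow L^2(\Omega)$.

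With these three verifications in place, the abstract {\L}ojasiewicz--Simon theorem (cf. \cite{C2003,GG2006,LS83}) furnishes constants $\kappa\in(0,1/2)$, $C\ge1$ and $\ell'\in(0,1)$ such that $|\mathcal{F}(\varphi,\sigma)-\mathcal{F}(\varphi_{\mathrm{s}},\sigma_{\mathrm{s}})|^{1-\kappa}\le C\big(\|\mathcal{M}_1(\varphi,\sigma)\|+\|\mathcal{M}_2(\varphi,\sigma)\|\big)$ whenever $\overline\varphi=m_1$, $\overline\sigma=m_2$ and $\|\varphi-\varphi_{\mathrm{s}}\|_{H^2}+\|\sigma-\sigma_{\mathrm{s}}\|<\ell'$. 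To remove the mean constraint on $\varphi$ I would use the translation $\widetilde\varphi=\varphi-(\overline\varphi-m_1)$, so $\overline{\widetilde\varphi}=m_1$ and $\|\widetilde\varphi-\varphi_{\mathrm{s}}\|_{H^2}\le C\|\varphi-\varphi_{\mathrm{s}}\|_{H^2}$; since $\varPsi,\varPsi'$ are Lipschitz on the relevant compact subinterval, one gets $|\mathcal{F}(\varphi,\sigma)-\mathcal{F}(\widetilde\varphi,\sigma)|\le C|\overline\varphi-m_1|$ and $\|\mathcal{M}_1(\varphi,\sigma)-\mathcal{M}_1(\widetilde\varphi,\sigma)\|\le C|\overline\varphi-m_1|$, while $\mathcal{M}_2$ and the $-\Delta$ and $\mathcal{N}$ contributions are unchanged by the shift; combining these with the constrained inequality applied to $(\widetilde\varphi,\sigma)$, the bound $|\overline\varphi-m_1|\le|\overline\varphi-m_1|^{1-\kappa}$ (valid for $\ell$ small), the subadditivity $(a+b)^{1-\kappa}\le a^{1-\kappa}+b^{1-\kappa}$, and $m_1=\overline{\varphi_{\mathrm{s}}}$, one arrives at \eqref{lojas} after shrinking $\ell$.

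I expect the main obstacle to be the analyticity of the functional, i.e.\ establishing that the Nemytskii operators associated with the singular potential $\varPsi$ are real-analytic maps between the chosen Sobolev spaces. This is precisely where the strict separation of the stationary solution is indispensable: it confines $\varphi$ --- and, via $H^2\hookrightarrow C(\overline\Omega)$, all nearby competitors --- to a compact subinterval of $(-1,1)$ on which $\varPsi$ and its derivatives are genuinely analytic and uniformly bounded, so that the logarithmic singularities at $\pm1$ play no role. A secondary technical point is the Fredholm property of the Hessian, where the chemotactic coupling constant $\chi$ must be carried through the Schur-complement argument; once the principal part is isolated, however, this is routine.
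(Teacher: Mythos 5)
Your proposal is correct in substance, but it takes a genuinely different route through the core step. The paper never applies an abstract {\L}ojasiewicz--Simon theorem to the coupled functional: it first quotes a known single-variable result (\cite{GG2006,Rupp}, cf. Lemma \ref{ls2}) for the reduced energy $\widetilde{\mathcal{F}}(\varphi)=\int_\Omega\big(\tfrac12|\nabla\varphi|^2+\varPsi(\varphi)\big)\,\mathrm{d}x-\tfrac{\chi^2}{2}\|\varphi\|^2+\tfrac{\beta}{2}\|\nabla\mathcal{N}(\varphi-\overline{\varphi})\|^2$ with the mass constraint, and then recovers the two-variable inequality purely algebraically via the completion-of-square identity $\mathcal{F}(\widetilde{\varphi},\sigma)=\widetilde{\mathcal{F}}(\widetilde{\varphi})+\tfrac12\|(\sigma-\chi\widetilde{\varphi})-\overline{\sigma-\chi\widetilde{\varphi}}\|^2+\mathrm{const}$ together with the fact that $\sigma_\mathrm{s}-\chi\varphi_\mathrm{s}$ is constant at any critical point; this sidesteps any spectral analysis of the coupled second variation and, in the reduced step, even yields the stronger $V_0'$-norm version before weakening to $L^2$. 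You instead verify the hypotheses of the abstract theorem directly for the pair $(\varphi,\sigma)$ on $V_0=(H^2_N\cap L^2_0)\times L^2_0\to H_0=L^2_0\times L^2_0$: your identification of the constrained gradient $(\mathcal{M}_1,\mathcal{M}_2)$ with the two quantities in \eqref{lojas} is right, the analyticity argument via separation and $H^2\hookrightarrow C(\overline{\Omega})$ is exactly the mechanism the paper also relies on, and your Schur-complement/compact-perturbation proof that the Hessian is Fredholm of index zero is sound (the $(2,2)$-block is the identity on $L^2_0$ and the Schur complement $-\Delta-\chi^2$ plus compact terms is Fredholm of index zero). Your Step 3 (the shift $\widetilde{\varphi}=\varphi-(\overline{\varphi}-m_1)$ and the $C|\overline{\varphi}-m_1|^{1-\kappa}$ bookkeeping) coincides with the paper's Step 3. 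The price of your route is that you must carefully match the precise hypotheses of whichever abstract statement you invoke (Chill/Rupp formulations with $V_0\hookrightarrow H_0\hookrightarrow V_0'$ and analyticity of $E'$ into $H_0$), and you should make explicit, as the paper does, that the shifted function $\widetilde{\varphi}$ remains strictly separated from $\pm1$ after shrinking $\ell$, so that the Lipschitz estimates for $\varPsi,\varPsi'$ apply to both $\varphi$ and $\widetilde{\varphi}$; what it buys is a self-contained argument that does not depend on the special algebraic structure coupling $\sigma$ to $\varphi$ quadratically, and would extend to energies where $\sigma$ cannot be eliminated by completing the square.
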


\begin{proof} Since every stationary solution $(\varphi_\mathrm{s},\sigma_\mathrm{s})\in \mathcal{X}_{m_1,m_2}$ is also a (strong) solution to the evolution problem \eqref{of1.a}--\eqref{oini0} with the initial datum given by itself, from the uniqueness of solutions to \eqref{of1.a}--\eqref{oini0}, we again have  $\mathcal{S}(t)(\varphi_\mathrm{s},\sigma_\mathrm{s}) =(\varphi_\mathrm{s},\sigma_\mathrm{s})$ for all $t\geq 0$. Moreover, by \cite[Corollary 2.1]{H2}, there exists some $\delta_\mathrm{s}\in (0,1)$ such that
\begin{align}
\|\varphi_\mathrm{s}\|_{C(\overline{\Omega})}\leq 1-\delta_\mathrm{s}.
\label{sta-sepa}
\end{align}
Consider the energy
\begin{align}
\mathcal{F}_1(\varphi)=\int_{\Omega} \Big( \frac{1}{2}|\nabla \varphi|^2 + \varPsi_0(\varphi)\Big) \mathrm{d}x,\quad \label{F2}
\end{align}
where the potential function $\varPsi_0$ satisfies (H2).
From \cite[Lemma 4.1]{A2007}, we find that $\mathcal{F}_1$ is a proper, lower semi-continuous and convex functional on $\mathcal{D}(\mathcal{F}_1)=\{\vp\in H^1(\Omega)\ |\ \varPsi_0(\varphi) \in L^1(\Omega),\ \overline{\varphi}=m_1\}$. Besides, it follows from \cite[Theorem 4.3]{A2007} that the subgradient of $\mathcal{F}_1$ is given by
$$
\partial \mathcal{F}_1(\varphi) = -\Delta \varphi + \varPsi_0'(\varphi)- \overline{\varPsi_0'(\varphi)}
\in L_0^2(\Omega),
$$
for all
$$
\varphi\in \mathcal{D}(\partial\mathcal{F}_1)\triangleq
\big\{\varphi\in H_N^2(\Omega)\ \big|\ \varPsi'_0(\varphi)\in L^2(\Omega),\ \varPsi''_0(\varphi)|\nabla \varphi|^2\in L^1(\Omega),\ \overline{\varphi}=m_1\big\}.
$$
Thanks to the strict separation property \eqref{sta-sepa} and (H2),  for any $(\varphi_\mathrm{s},\sigma_\mathrm{s})\in \mathcal{X}_{m_1,m_2}$, we have
$\varphi_\mathrm{s}\in  \mathcal{D}(\partial\mathcal{F}_1)$.
From \eqref{s5bchv}--\eqref{s5dchv}, we also find that $(\varphi_\mathrm{s},\sigma_\mathrm{s})$
satisfies the reduced system:
 \begin{subequations}
		\begin{alignat}{3}
		&-\Delta \varphi_\mathrm{s} +  \big(\varPsi^{\prime}(\varphi_\mathrm{s}\big) - \overline{\varPsi^{\prime}(\varphi_\mathrm{s})}\big) -\chi(\sigma_\mathrm{s} - \overline{\sigma_\mathrm{s}}) +\beta\mathcal{N}(\varphi_\mathrm{s}-\overline{\varphi_\mathrm{s}}) =0,\quad
		&\ \text{a.e. in } \Omega,   \label{crit-1}\\
		& \sigma_{\mathrm{s}}-\chi\varphi_\mathrm{s}= m_2-\chi m_1,\quad
		&\ \text{a.e. in } \Omega. \label{crit-2}
		\end{alignat}
	\end{subequations}

We now divide the proof of Theorem \ref{LSmain} into several steps. \medskip

\textbf{Step 1. {\L}--S inequality for a reduced energy with mass constraint}.
In view of \eqref{crit-1}--\eqref{crit-2}, we define the following reduced energy
\begin{align}
\widetilde{\mathcal{F}}(\varphi)
&=\int_\Omega \Big( \frac{1}{2}|\nabla \varphi|^2+ \varPsi(\varphi)\Big) \mathrm{d}x
-\frac{\chi^2}{2}\|\varphi\|^2
+\frac{\beta}{2}\|\nabla\mathcal{N}(\varphi-\overline{\varphi})\|^{2}. \label{f1}
\end{align}
From the strict separation property \eqref{sta-sepa} for $\vp_\mathrm{s}$ and the Sobolev embedding $H^2(\Omega)\hookrightarrow L^\infty(\Omega)$, we infer  that for any $\varphi\in H^2(\Omega)$ satisfying $\|\varphi-\varphi_\mathrm{s}\|_{H^2}\ll 1$, a slightly relaxed strict separation property still holds for $\vp$, for instance,
\begin{align}
\|\varphi\|_{C(\overline{\Omega})}\leq 1-\frac{2}{3}\delta_\mathrm{s}.\label{re-sep1}
\end{align}
The uniform strict separation property \eqref{re-sep1} rules out the possible singularities of $\varPsi'(\varphi)$ at $\pm 1$ for functions in a small $H^2$-neighbourhood of $\vp_\mathrm{s}$. Hence, we are able to apply the abstract result \cite[Theorem 6]{GG2006} or \cite[Corollary 5.2]{Rupp} (cf. also \cite[Proposition 6.3]{A2007} for the Cahn--Hilliard equation) to  derive the following gradient inequality of {\L}ojasiewicz--Simon type for $\widetilde{\mathcal{F}}$.
\bl\label{LS}  \label{ls2}
Under the assumptions of Theorem \ref{LSmain}, for any given $ (\varphi_{\mathrm{s}},\sigma_{\mathrm{s}}) \in	\mathcal{X}_{m_1,m_2}$,
there exist constants $\widetilde{\kappa}\in(0,1/2)$, $\widetilde{\ell}\in (0,1)$ and $\widetilde{C}\geq 1$ such that the following inequality holds
\begin{align}
&|\widetilde{\mathcal{F}}(\widetilde{\varphi}) -\widetilde{\mathcal{F}}(\varphi_{\mathrm{s}})|^{1-\widetilde{\kappa}}
 \le
\widetilde{C}\|-\Delta \widetilde{\varphi}+\varPsi'(\widetilde{\varphi})
- \chi^2(\widetilde{\varphi}-\overline{\widetilde{\varphi}})
+{\beta}\mathcal{N}(\widetilde{\varphi}-\overline{\widetilde{\varphi}})-\overline{ \varPsi'(\widetilde{\varphi})}\|_{V_0'},
\label{loj}
\end{align}
for any $\widetilde{\varphi}\in H_N^{2}(\Omega)$ satisfying the mass constraint $\overline{\widetilde{\varphi}}=m_1$ and $\|\widetilde{\varphi}-\varphi_{\mathrm{s}}\|_{H^2}<\widetilde{\ell} $.
The constants $\widetilde{\kappa}$, $\widetilde{\ell}$, $\widetilde{C}$ depend on $\varphi_\mathrm{s}$, $m_1$, $m_2$, $\beta$, $\chi$ and $\Omega$, but not on $\widetilde{\varphi}$.
\el

\textbf{Step 2. Recover the case of two variables}. We observe that for any $(\widetilde{\varphi}, \sigma)\in \mathcal{Z}_{m_1,m_2}$, it holds
\begin{align}
\mathcal{F}(\widetilde{\varphi},\sigma)
& =\widetilde{\mathcal{F}}(\widetilde{\varphi})
+ \frac{1}{2}  \int_\Omega (\sigma -\chi\widetilde{\varphi})^2\,\mathrm{d}x,\notag\\
& =\widetilde{\mathcal{F}}(\widetilde{\varphi})
+ \frac{1}{2}  \int_\Omega \big[(\sigma -\chi\widetilde{\varphi})- \overline{\sigma -\chi\widetilde{\varphi}}\,\big]^2\,\mathrm{d}x
+ \frac{1}{2} |\Omega| \big(\overline{\sigma -\chi\widetilde{\varphi}}\big)^2 \notag  \\
& =\widetilde{\mathcal{F}}(\widetilde{\varphi})
+ \frac{1}{2}  \int_\Omega \big[(\sigma -\chi\widetilde{\varphi})- \overline{\sigma -\chi\widetilde{\varphi}}\,\big]^2\,\mathrm{d}x
+ \frac{1}{2} |\Omega| \big(\overline{\sigma_\mathrm{s}  -\chi \varphi_\mathrm{s}}\big)^2. \label{f2}
\end{align}
Besides, \eqref{crit-2} implies that $\| \sigma_{\mathrm{s}}-\chi\varphi_\mathrm{s} - \overline{\sigma_{\mathrm{s}} -\chi \varphi_\mathrm{s}}\|=0$.
As a consequence, we can apply Lemma \ref{ls2} to conclude that for any $(\widetilde{\varphi},\sigma)\in  \big(H^2_N(\Omega)\times L^2(\Omega)\big) \cap \mathcal{Z}_{m_1,m_2}$ satisfying the assumption $\|\widetilde{\varphi}-\varphi_{\mathrm{s}}\|_{H^2}+ \|\sigma-\sigma_{\mathrm{s}}\|<\widetilde{\ell}$,
it holds
\begin{align}
&|\mathcal{F}(\widetilde{\varphi},\sigma) -\mathcal{F}(\varphi_\mathrm{s},\sigma_\mathrm{s})|^{1-\widetilde{\kappa}}\notag\\
&\quad  \leq |\widetilde{\mathcal{F}}(\widetilde{\varphi}) -\widetilde{\mathcal{F}}(\varphi_{\mathrm{s}})|^{1-\widetilde{\kappa}}
  + \| (\sigma -\chi\widetilde{\varphi})- \overline{\sigma -\chi\widetilde{\varphi}}\|^{2(1-\widetilde{\kappa})}\notag\\
  &\quad \leq \widetilde{C}\|-\Delta \widetilde{\varphi} + \varPsi'(\widetilde{\varphi})
- \chi^2(\widetilde{\varphi}-\overline{\widetilde{\varphi}})
+{\beta}\mathcal{N}(\widetilde{\varphi}-\overline{\widetilde{\varphi}})-\overline{ \varPsi'(\widetilde{\varphi})}\|_{V_0'}\notag \\
&\qquad  + C \| (\sigma -\chi\widetilde{\varphi})- \overline{\sigma -\chi\widetilde{\varphi}}\|\notag \\
&\quad \leq \widetilde{C}\|-\Delta \widetilde{\varphi} + \varPsi'(\widetilde{\varphi})
- \chi(\sigma-\overline{\sigma})
+{\beta}\mathcal{N}(\widetilde{\varphi}-\overline{\widetilde{\varphi}})-\overline{ \varPsi'(\widetilde{\varphi})}\|_{V_0'} \notag \\
&\qquad + \widetilde{C}|\chi| \| (\sigma -\chi\widetilde{\varphi})- \overline{\sigma -\chi\widetilde{\varphi}}\|_{V_0'} + C \| (\sigma -\chi\widetilde{\varphi})- \overline{\sigma -\chi\widetilde{\varphi}}\|\notag \\
&\quad \leq C_8 \|-\Delta \widetilde{\varphi} + \varPsi'(\widetilde{\varphi})
- \chi(\sigma-\overline{\sigma})
+{\beta}\mathcal{N}(\widetilde{\varphi}-\overline{\widetilde{\varphi}})-\overline{ \varPsi'(\widetilde{\varphi})}\|\notag\\
&\qquad +C_8 \|\sigma-\chi\widetilde{\varphi}- \overline{\sigma -\chi\widetilde{\varphi}}\|,
\label{ls-4}
\end{align}
where $\widetilde{\kappa}$, $\widetilde{\ell}$ are determined as in Lemma \ref{ls2} and $C_8\geq \widetilde{C}\geq 1$ depends on $\varphi_\mathrm{s}$, $\sigma_\mathrm{s}$, $m_1$, $m_2$, $\beta$, $\chi$ as well as $\Omega$.
\medskip

\textbf{Step 3. The general case with possible change of mass}. Now for any $\varphi\in H^2_N(\Omega)$, we make the following change of variable (shifting up to a constant)
$$
\widetilde{\varphi}=\varphi-(\overline{\varphi}-m_1).
$$
Thus, $\overline{\widetilde{\varphi}}=m_1=\overline{\varphi_\mathrm{s}}$ as required in Step 2 and $\varphi-\overline{\varphi}= \widetilde{\varphi}-\overline{\widetilde{\varphi}}$.
Set $\ell=\widetilde{\ell}/2$, where $\widetilde{\ell}\in (0,1)$ is determined as in Lemma \ref{ls2}. For any $(\varphi, \sigma)\in H^2_N(\Omega)\times L^2(\Omega)$ satisfying $\|\varphi-\varphi_\mathrm{s}\|_{H^2}+ \|\sigma-\sigma_{\mathrm{s}}\|<\ell$, it holds
\begin{align*}
\|\widetilde{\varphi}-\varphi_\mathrm{s}\|_{H^2}+ \|\sigma-\sigma_{\mathrm{s}}\|
& \leq \|\varphi-\varphi_\mathrm{s}\|_{H^2}+ \|\overline{\varphi}-m_1\|_{H^2}+ \|\sigma-\sigma_{\mathrm{s}}\| \notag\\
&\leq 2\|\varphi-\varphi_\mathrm{s}\|_{H^2}+ \|\sigma-\sigma_{\mathrm{s}}\| < \widetilde{\ell}.
\end{align*}
Besides, we may adjust the positive constant $\widetilde{\ell}$ properly smaller if necessary so that (recall \eqref{sta-sepa})
\begin{align}
\|\varphi\|_{C(\overline{\Omega})}\leq 1-\frac{2}{3}\delta_\mathrm{s} \quad \text{and}\quad \|\widetilde{\varphi}\|_{C(\overline{\Omega})}\leq 1-\frac{2}{3}\delta_\mathrm{s}.\label{re-sep2}
\end{align}
On one hand, for $(\widetilde{\varphi}, \sigma)$ we can derive the inequality \eqref{ls-4} as in Step 2. On the other hand, using the strict separation property \eqref{re-sep2}, (H2) and the constraint $\overline{\sigma}=m_2$, we can obtain the following estimates
\begin{align}
|\mathcal{F}(\varphi,\sigma) -\mathcal{F}(\widetilde{\varphi},\sigma)|
& = \left|\int_{\Omega} \big( \varPsi(\varphi)-\varPsi(\widetilde{\varphi})
-\chi\sigma(\varphi-\widetilde{\varphi}) \big) \mathrm{d}x\right|
\notag\\
&\leq |\Omega|\max_{r\in[-1+\frac{\delta_\mathrm{s}}{3}, 1-\frac{\delta_\mathrm{s}}{3}]}|\varPsi'(r)| |\overline{\varphi}-m_1| +|\chi||\Omega||m_2||\overline{\varphi}-m_1|
\notag\\
&\leq C|\overline{\varphi}-m_1|,
\label{diffE}
\end{align}
and
\begin{align}
\|\varPsi'(\varphi)-\overline{ \varPsi'(\varphi)}-\varPsi'(\widetilde{\varphi})+\overline{ \varPsi'(\widetilde{\varphi})}\|
& \leq 2 \max_{r\in[-1+\frac{\delta_\mathrm{s}}{2}, 1-\frac{\delta_\mathrm{s}}{2}]}|\varPsi''(r)|\|\varphi -\widetilde{\varphi}\|
\notag\\
&\leq C|\overline{\varphi}-m_1|.
\notag
\end{align}
From the above estimates and \eqref{ls-4}, we can deduce that
\begin{align}
&|\mathcal{F}(\varphi,\sigma) -\mathcal{F}(\varphi_\mathrm{s},\sigma_\mathrm{s})|^{1-\widetilde{\kappa}}\notag\\
&\quad \leq  |\mathcal{F}(\varphi,\sigma) -\mathcal{F}(\widetilde{\varphi},\sigma)|^{1-\widetilde{\kappa}}
+ |\mathcal{F}(\widetilde{\varphi},\sigma) -\mathcal{F}(\varphi_\mathrm{s},\sigma_\mathrm{s})|^{1-\widetilde{\kappa}}\notag\\
&\quad \leq  C_8 \|-\Delta \widetilde{\varphi} + \varPsi'(\widetilde{\varphi})
- \chi(\sigma-\overline{\sigma})
+{\beta}\mathcal{N}(\widetilde{\varphi} -\overline{\widetilde{\varphi}})-\overline{ \varPsi'(\widetilde{\varphi})}\|\notag\\
&\qquad +C_8 \|\sigma-\chi\widetilde{\varphi}- \overline{\sigma -\chi\widetilde{\varphi}}\| + C|\overline{\varphi}-m_1|^{1-\widetilde{\kappa}}
\notag\\
&\quad \leq C_9 \|-\Delta \varphi + \varPsi'(\varphi)
- \chi(\sigma-\overline{\sigma})
+{\beta}\mathcal{N}(\varphi-\overline{\varphi})-\overline{ \varPsi'(\varphi)}\|\notag\\
&\qquad +C_9 \|\sigma-\chi\varphi- \overline{\sigma -\chi\varphi}\| + C_9|\overline{\varphi}-m_1|^{1-\widetilde{\kappa}},
\label{ls-5}
\end{align}
for some constant $C_9\geq C_8$, depending also on $\delta_\mathrm{s}$ but not on $(\varphi, \sigma)$. Thus, we can draw the conclusion \eqref{lojas} by taking $\kappa=\widetilde{\kappa}$ and $C=C_9$ as in \eqref{ls-5}.

The proof of Theorem \ref{LSmain} is complete.
\end{proof}
\begin{remark}\label{rem:kappa}
In general, we only have $\kappa\in (0,1/2)$. In some special cases, one can achieve $\kappa=1/2$ (see e.g., \cite{C2003}). Besides, it is easy to verify that if the inequality \eqref{lojas} holds for some $\kappa$, then it is valid for any $\kappa'\in (0,\kappa)$ (with possible modification for the constant $C$).
\end{remark}

\section{Global Strong Solutions}
\setcounter{equation}{0} \label{sec:glostr}

In this section, we prove Theorem \ref{3main}. Based on the local well-posedness result Theorem \ref{ls}, we complete our proof by exploiting the dissipative structure of the coupled system \eqref{f3.c}--\eqref{f2.b} and deriving uniform-in-time estimates under suitable smallness assumptions on the initial data. In this manner, we are able to extend the local strong solution step by step and eventually obtain a global one that is uniformly bounded on $[0,+\infty)$. In what follows, we focus on the more involved case with an arbitrary but fixed parameter $\alpha>0$, while the case $\alpha=0$ (that leads to the mass conservation $\overline{\varphi(t)}\equiv \overline{\varphi_0}$) can be treated with some minor modifications.

\subsection{Local strong solution revisited} \label{loc-re}

Since $(\varphi_{*},\sigma_{*})$ is a given local minimizer of the free energy $\mathcal{F}(\varphi,\sigma)$ in $\mathcal{Z}_{m_1,m_2}$ with $m_1=c_0$, it follows from Definition \ref{min:ene} that there exists some $\lambda_*>0$, for any $(\widetilde{\varphi},\sigma)\in \mathcal{Z}_{c_0,m_2}$ satisfying $\|\widetilde{\varphi}-\varphi_*\|_{H^1} +\|\sigma-\sigma_*\|\le \lambda_*$, it holds
\be
\mathcal{F}(\varphi_*,\sigma_*)\le \mathcal{F}(\widetilde{\varphi},\sigma). \label{mini1}
\ee
Besides, Proposition \ref{prop-le2} yields that
$(\varphi_{*},\sigma_{*})\in \big(H^2_N(\Omega)\times H^2_N(\Omega)\big)\cap \mathcal{Z}_{c_0,m_2}$ and
\be
\|\varphi_{*}\|_{C(\overline{\Omega})} \le 1-\delta_*,  \quad \text{for some}\ \ \delta_*\in (0,1).
\label{m1}
\ee
From the Sobolev embedding theorem in three dimensions, we have
\begin{equation}
\|\varphi\|_{C(\overline{\Omega})} \le C_{S}\|\varphi\|_{H^{2}},\quad \forall\, \varphi\in H^2(\Omega),
\label{m}
\end{equation}
where the constant $C_{S}>0$ only depends on $\Omega$.
Concerning the initial data, we impose the following upper bounds
\begin{align}
&\left\|\boldsymbol{v}_{0}\right\|_{\boldsymbol{H}^{1}} \le M_0, \quad\left\|\varphi_{0}\right\|_{H^{3}} \le M_0,\quad\left\|\sigma_{0}\right\|_{H^{1}} \le M_0, \label{l}\\
&\left\|\boldsymbol{v}_{0}\right\| \le \eta_{1}, \quad\left\|\varphi_{0}-\varphi_{*}\right\|_{H^{2}} \le \eta_{2},\quad\left\|\sigma_{0}-\sigma_{*}\right\| \le \eta_{3},\label{hh}
\end{align}
where $M_0>0$ (allowed to be arbitrarily large) and $\eta_{1}$, $\eta_{2}$, $\eta_{3} \in (0,1)$ are small constants that will be specified later. Hereafter, we simply use the equivalent norm $\left\|\boldsymbol{v}\right\|_{\boldsymbol{H}^{1}} =\left\|\nabla \boldsymbol{v}\right\|$ for any $\bm{v}\in \bm{H}^1_{0,\mathrm{div}}(\Omega)$.
Assuming that
\begin{equation*}
0<\eta_{2}<\min \left\{1, \frac{\delta_*}{3 C_{S}}\right\},
\end{equation*}
then from \eqref{m} and \eqref{hh}, we have the strict separation property for the initial datum:
\begin{align}
\|\varphi_0\|_{C(\overline{\Omega})} &\le\|\varphi_{*}\|_{C(\overline{\Omega})} +\|\varphi_0-\varphi_{*}\|_{C(\overline{\Omega})} \le 1-\delta_*+C_S\eta_2 \le 1-\frac{2\delta_*}{3}.
\label{tt}
\end{align}
According to (H2), we introduce the following notations
\begin{equation}
K_1=\underset{r\in[-1+\frac{\delta_*}{3},\,1-\frac{\delta_*}{3}]}{\max}|\varPsi''(r)|,\quad K_2=\underset{r\in[-1+\frac{\delta_*}{3},\,1-\frac{\delta_*}{3}]}{\max}|\varPsi'(r)|,\quad
K_3=\underset{r\in[-1,\,1]}{\max}|\varPsi(r)|.
\label{ii}
\end{equation}
Below we denote by $L_i$ ($i=1,2,\cdots$) positive constants that may depend on upper bounds of the norms of the initial data, norms of the energy minimizer $(\varphi_*, \sigma_*)$, coefficients of the system and $\Omega$.
\medskip

\textbf{Local strong solution}. In view of \eqref{l} and \eqref{tt}, we can apply Theorem \ref{ls}-(2) to conclude that problem \eqref{f3.c}--\eqref{ini0} admits a unique local strong solution $(\bm{v},\varphi,\mu,\sigma,p)$ on a certain time interval $[0,T_1]$ with the corresponding regularity stated therein. In particular, it holds
\begin{equation}
\|\varphi(t)\|_{C(\overline{\Omega})} \le1-\frac{\delta_*}{3},\quad \forall\,t\in[0,T_1].
\label{r}
\end{equation}
The existence time $T_1>0$ depends on $\left\|\boldsymbol{v}_{0}\right\|_{\boldsymbol{H}^{1}}$,
 $\|\varphi_0\|_{H^1}$, $\left\|\mu_0\right\|_{H^{1}}$ (or equivalently $\|\varphi_0\|_{H^3}$ and $\delta_*$),  $\left\|\sigma_{0}\right\|_{H^{1}}$,  $K_3$, coefficients of the system, $\Omega$ and $\delta_{*}$ (cf. \eqref{Tss}). After checking the argument in Section \ref{gss}, we find that $T_1$ only depends on upper bounds of the norms $\left\|\boldsymbol{v}_{0}\right\|_{\boldsymbol{H}^{1}}$,
  $\left\|\varphi_0\right\|_{H^{3}}$,  $\left\|\sigma_{0}\right\|_{H^{1}}$ and the distance of $\vp_0$ from the pure states $\pm1$ characterized by $\delta_*$, but not on the detailed choice of initial data.
 \medskip

 \textbf{Lower-order estimates}.
 Similar to \eqref{conver1}, we have
\be
	\frac{\mathrm{d}}{\mathrm{d}t} \big(\overline{\varphi}(t)-c_0\big) +\alpha\big(\overline{\varphi}(t)-c_0\big)=0,
	\label{conver1b}
	\ee
which implies
	\be
	\overline{\varphi}(t) = c_0+e^{-\alpha t}\left(\overline{\varphi_{0}}-c_0\right),\quad\forall\,t\in [0,T_1].\label{aver-phib}
	\ee
Besides, the mass of $\sigma$ is conserved (cf. \eqref{aver-sig}), that is,
	\be
	\overline{\sigma}(t)=\overline{\sigma_{0}}=m_2,\quad \forall\, t\in [0,T_1].
	\label{aver-sigb}
	\ee
Recalling \eqref{E}, for the total energy functional
$$
\mathcal{E}(t)= \frac12\|\bm{v}(t)\|^2 + \mathcal{F}(\varphi(t),\sigma(t)),
$$
with $\mathcal{F}$ given as in \eqref{fe}, we can deduce from (H2) the uniform lower bound (cf. \eqref{low-bd1})
\begin{align}
\mathcal{E}(t)&\geq \frac{1}{2}\|\bm{v}(t)\|^2
	+ \frac{1}{2}\|\varphi(t)\|_{H^1}^2
	+\frac{1}{4}\|\sigma(t)\|^2 - L_1,
\label{low-bd1b}
\end{align}
where $L_1>0$ only depends on the coefficients of the system and $\Omega$. Similar to \eqref{BEL2} and \eqref{q3} (taking $\gamma=0$ therein),  we can deduce from  \eqref{aver-phib} that
\begin{align}
\frac{\mathrm{d}}{\mathrm{d}t} \left( \mathcal{E}(t)+ L_2 e^{-\alpha t}|\overline{\varphi_0}-c_0|\right)+\frac12 \mathcal{D}(t)\leq 0,\quad \forall\, t\in (0,T_1),
\label{BEL5}
\end{align}
where
\begin{align}
\mathcal{D}(t)& =\int_\Omega 2\eta(\varphi(t))|D\bm{v}(t)|^2\, \mathrm{d}x + \|\nabla \mu(t)\|^2+\|\nabla\big(\sigma(t)-\chi\varphi(t)\big)\|^2,
\label{D2}
\end{align}
and $L_2>0$  depends only on the coefficients of the system, $\overline{\varphi_0}$ (thus on $\delta_*$ in view of \eqref{tt}), $m_2$ as well as $\Omega$.

Using \eqref{hh}--\eqref{ii} and H\"{o}lder's inequality, we can derive a uniform upper bound for the initial free energy
\begin{align}
\mathcal{F}(\varphi_0,\sigma_0)
&=\int_\Omega \Big(\frac{1}{2}|\nabla \varphi_0|^2 + \varPsi(\varphi_0)\Big)\, \mathrm{d}x
+\frac{1}{2}\|\sigma_0\|^2 -\int_\Omega \chi\sigma_0\varphi_0\, \mathrm{d}x +\frac{\beta}{2}\|\nabla\mathcal{N}(\varphi_0-\overline{\varphi_0})\|^{2}\notag\\
&\le  \frac{1}{2}\|\nabla\varphi_0\|^2 +|\Omega|\underset{r\in[-1,1]}{\max}|\varPsi(r)| +\frac{1+\chi^2}{2}\|\sigma_0\|^2 +\frac{1}{2}\|\varphi_0\|^2
+\frac{|\beta|C}{2} \|\varphi_0-\overline{\varphi_0}\|^{2} \notag\\
&\le \frac{1}{2}(\|\varphi_{*}\|_{H^1}+1)^2+ |\Omega|K_3   +\frac{1+\chi^2}{2}(\|\sigma_{*}\|+1)^2+ \frac{1}{2}|\Omega|+2|\beta|C|\Omega|\notag\\
&=: L_3,\label{jj}
\end{align}
where $L_3>0$ depends on $\chi$, $\beta$, $\|\varphi_{*}\|_{H^1}$, $\|\sigma_{*}\|$, $K_3$ and $\Omega$, but not on the detailed values of $\eta_1$, $\eta_2$, $\eta_3$ and $(\varphi_0,\sigma_0)$. Then it follows from \eqref{hh}, \eqref{BEL5} and \eqref{jj} that
\begin{align}
\mathcal{E}(t) +\frac12\int_0^t \mathcal{D}(\tau)\, \mathrm{d}\tau
&\le \mathcal{E}(0) +L_2|\overline{\varphi_0}-c_0| -L_2 e^{-\alpha t}|\overline{\varphi_0}-c_0|\notag\\
&\le 2(1+L_2+L_3),\qquad \forall\,t\in[0,T_1].
\label{bbb}
\end{align}
The inequality \eqref{bbb} together with \eqref{low-bd1b} yields the lower-order estimate
\begin{align}
& \|\bm{v}(t)\|^2 + \|\varphi(t)\|_{H^1}^2 + \|\sigma(t)\|^2
+\int_0^t \left( \|\nabla\bm{v}(\tau)\|^2 + \|\nabla \mu(\tau)\|^2 +\|\nabla(\sigma(\tau)-\chi\varphi(\tau))\|^2\right) \mathrm{d}{\tau}\notag\\
&\quad  \le 8\max\big\{1, \nu_*^{-1}\big\}(1+L_1+L_2+L_3)\notag\\
&\quad =:L_4>1, \qquad \forall\,t\in [0,T_1]. \label{q}
\end{align}

\textbf{Higher-order estimates}.  Let us proceed to derive higher-order estimates. Recalling \eqref{h1}, we define the function
\begin{align}
\Lambda(t)& =\frac{1}{2}\left\|\nabla \bm{v}(t)\right\|^{2}
+\frac{a_1}{2}\left\|\nabla \mu(t)\right\|^{2} +\frac{1}{2} \|\nabla(\sigma(t)-\chi\varphi(t))\|^{2}\notag \\
&\quad +a_1\int_\Omega \big(\bm{v}(t)\cdot\nabla\varphi(t)\big)\mu(t)\,\mathrm{d}x + a_1 \alpha\big(\overline{\varphi}(t)-c_0\big)\int_\Omega \mu(t)\,\mathrm{d}x,
\label{h1a}
\end{align}
where the constant $a_1\in(0,1)$ is chosen as in \eqref{a1} and only depends on $\Omega$. Using \eqref{aver-phib} and \eqref{q}, slightly modifying the argument for \eqref{qqqa}, \eqref{m3b} and \eqref{m3} (taking $\gamma=0$ therein), we can obtain
\begin{align}
\Lambda(t) &\geq \frac{1}{4}\left\|\nabla \bm{v}(t)\right\|^{2}
+\frac{a_1}{4}\left\|\nabla \mu(t)\right\|^{2}
 +\frac{1}{4} \|\nabla(\sigma(t)-\chi\varphi(t))\|^{2}-L_5\alpha e^{-\alpha t}|\overline{\varphi_0}-c_0|,\label{m3b-b}\\
\Lambda(t) & \leq \frac34\left\|\nabla \bm{v}(t)\right\|^{2} +\frac{3a_1}{4}\left\|\nabla \mu(t)\right\|^{2}
 +\frac{3}{4} \|\nabla(\sigma(t)-\chi\varphi(t))\|^{2}+L_5\alpha e^{-\alpha t}|\overline{\varphi_0}-c_0|,\label{m3-b}
\end{align}
where $L_5>0$ depends on $L_4$, the coefficients of the system and $\Omega$. Setting
\begin{align}
\widetilde{\Lambda}(t)=\Lambda(t)+2L_5\alpha e^{-\alpha t}|\overline{\varphi_0}-c_0|,
\label{LAMB}
\end{align}
we infer from \eqref{l}--\eqref{tt} and \eqref{m3-b} that
\begin{align}
\widetilde{\Lambda}(0)
&=\Lambda(0)+2L_5\alpha|\overline{\varphi_0}-c_0|\notag\\
&\le\frac34\left\|\nabla \boldsymbol{v}_0\right\|^{2}+\frac{3a_1}{4}\left\|\nabla \mu(0)\right\|^{2}+\frac34\left\|\nabla (\sigma_0-\chi\varphi_0)\right\|^{2}+3L_5\alpha |\overline{\varphi_0}-c_0|\notag\\
&\le\frac34\|\nabla\bm{v}_0\|^2
+\frac{3}{4}\big(\|\varPsi''(\varphi_0)\|_{L^\infty}\|\nabla \varphi_0\|+ \|\nabla \Delta \varphi_0\| +\|\chi\nabla\sigma_0\|\notag\\
&\quad+\|\beta\nabla\mathcal{N}(\varphi_0-\overline{\varphi_0})\|\big)^2 +\frac{3}{2} \left\|\nabla \sigma_0\right\|^2+\frac{3\chi^2}{2}\|\nabla \varphi_0\|^2 +3L_5\alpha |\overline{\varphi_0}-c_0|\notag\\
&< 3 M_0^2+\big[ K_1(\|\varphi_{*}\|_{H^1}+1) +M_0+|\chi|M_0+|\beta| C|\Omega|^\frac12\big]^2\notag\\
&\quad + \frac{3}{2}M_0^2+  \frac{3\chi^2}{2}(\|\varphi_{*}\|_{H^1}+1)^2+6L_5 \alpha +1 \notag\\
&=:M_1.\label{p}
\end{align}
Similar to \eqref{m5}, we can derive the following  differential inequality
\begin{align}
&\frac{\mathrm{d}}{\mathrm{d}t}\widetilde{\Lambda}(t)
\le L_6	\widetilde{\Lambda}^5(t),\quad \forall\,t\in(0,T_1),
\label{aab}
\end{align}
where $L_6>0$ depends on $L_4$, the coefficients of the system and $\Omega$. By \eqref{p} and the continuity of $\widetilde{\Lambda}(t)$, we can find some time $T_2\in (0,T_1]$ such that
\begin{align}
\widetilde{\Lambda}(t)\le 2M_1,\quad \forall\,t\in[0,T_2].
\label{aac}
\end{align}
Indeed, we can choose (cf. \eqref{m5c})
\begin{align}
T_2= \min\left\{1,\ \frac{5}{6}T_1,\ \frac{15}{64L_6M_1^4}\right\}.
\label{T2}
\end{align}

From the equation \eqref{f4.d} and the elliptic estimate (cf. \cite[Lemma 6.4]{GGW}), we get
\begin{equation}
\|\varphi(t)\|_{H^3} \le L_7 \big(\|\nabla\mu(t)\|
+\|\varPsi''(\varphi(t))\|_{L^\infty}
\|\nabla \varphi(t)\| +\|\varphi(t)\|_{H^1}+\|\nabla\sigma(t)\|\big),
\label{o}
\end{equation}
where $L_7>0$ only depends on $\chi$, $\beta$ and $\Omega$. Without loss of generality, we take $L_7\geq 1$.
After the above preparations, we now choose the upper bound for higher-order norms of the initial data. Set
$$K_4= \max\{1,6L_5\alpha\}.$$
We take $M_0$ such that
\begin{align}
M_0& \ge    L_7\Big[\big(2a_1^{-\frac12}+2\big)\sqrt{K_4} +\big(K_1+2+|\chi|\big) \sqrt{L_4}\,\Big].
\label{ddM}
\end{align}
Once $M_0$ is fixed,  we can determine the constant $M_1$ (see \eqref{p}) and then the existence time $T_1$ as well as $T_2$ (see \eqref{T2}). Hence, by \eqref{r}, \eqref{q}, \eqref{m3b-b}, \eqref{aac} and \eqref{o}, we can derive the following estimates
\begin{align}
\|\bm{v}(t)\|_{\bm{H}^1}
&\le 2\sqrt{\widetilde{\Lambda}(t)} \le 2\sqrt{2M_1},\quad\forall\,t\in[0,T_2],
\label{421a}
\end{align}
\begin{align}
\|\sigma(t)\|_{H^1}
&\le 2\sqrt{\widetilde{\Lambda}(t)} +|\chi|\|\nabla \varphi(t)\|+\|\sigma(t)\|\notag\\
&\le 2\sqrt{2M_1}+(1+|\chi|)\sqrt{L_4},
\quad\forall\,t\in[0,T_2],
\label{4211a}
\end{align}
and
\begin{align}
\|\varphi(t)\|_{H^3}
&\le L_7\left(2a_1^{-\frac12}\sqrt{\widetilde{\Lambda}(t)}
+(K_1+1)\|\varphi(t)\|_{H^1} +\|\sigma(t)\|_{H^1}\right)\notag\\
&\le L_7\left(\big(2+2a_1^{-\frac12}\big) \sqrt{2M_1}
+(K_1+2+|\chi|)\sqrt{L_4} \right)\notag\\
&=:M_2,\quad\forall\,t\in[0,T_2].
\label{421}
\end{align}

\textbf{Change of the free energy}. For the sake of convenience, we derive an estimate on the change of the free energy along time evolution by using \eqref{ii}, \eqref{r} and \eqref{q}, that is,
\begin{align}
&|\mathcal{F}(\varphi(t),\sigma(t))-\mathcal{F}(\varphi_0,\sigma_0)|\notag\\
 &\quad\le
  \frac{1}{2}\|\nabla(\varphi(t)+\varphi_0)\|
\|\nabla(\varphi(t)-\varphi_0)\| +
 |\Omega|^\frac12\underset{r\in[-1+\frac{\delta_*}{3},\,1-\frac{\delta_*}{3}]}{\max}|\varPsi'(r)|\|\varphi(t)-\varphi_0\|
 \notag\\
&\qquad
+ \frac{1}{2}\|\sigma(t)+\sigma_0\|
\|\sigma(t)-\sigma_0\|  +|\chi|\|\sigma(t)-\sigma_0\|\|\varphi(t)\| +|\chi|\|\sigma_0\|\|\varphi(t)-\varphi_0\|\notag\\
&\qquad+\frac{|\beta|}{2}\|\nabla\mathcal{N}\big(\varphi(t)+\varphi_0-\overline{\varphi(t)+\varphi_0}\big)\| \|\nabla\mathcal{N}\big(\varphi(t)-\varphi_0-\overline{\varphi(t)-\varphi_0}\big)\|\notag\\
&\quad\le L_8\big(\|\varphi(t)-\varphi_0\|_{H^1} +\|\sigma(t)-\sigma_0\|\big),\quad \forall\, t\in [0,T_2],
\label{w}
\end{align}
where the constant $L_8>0$ depends on $L_4$, $K_2$, the coefficients of the system and $\Omega$. Without loss of generality, we take $L_8\geq 1.$

\begin{remark}
It is worth mentioning that all the estimates obtained in Section \ref{loc-re} are independent of the small parameters $\eta_1$, $\eta_2$, $\eta_3$ in \eqref{hh}.
\end{remark}

\subsection{Refined estimates}\label{opt}

For the given local minimizer $(\varphi_*, \sigma_*)$, we can apply Proposition \ref{prop-le2} and Theorem \ref{LSmain} to conclude that there exist constants
$\kappa_*\in (0,1/2)$, $\ell_* \in (0,1)$ and $C_*\geq 1$,
such that the following {\L}ojasiewicz--Simon inequality holds
\begin{align}
|\mathcal{F}(\varphi,\sigma) -\mathcal{F}(\varphi_*,\sigma_*)|^{1-\kappa_*}
& \leq C_*  \|-\Delta \varphi+ \varPsi'(\varphi)
- \chi(\sigma-\overline{\sigma})
+{\beta}\mathcal{N}(\varphi-\overline{\varphi})-\overline{ \varPsi'(\varphi)}\|\notag\\
& \quad +C_*  \|\sigma-\chi\varphi- \overline{\sigma -\chi\varphi}\| +C_* |\overline{\varphi}-c_0|^{1-\kappa_*},
\label{lojasb}
\end{align}
for any function $(\varphi,\sigma)\in H^2_N(\Omega)\times L^2(\Omega)$ satisfying $\overline{\sigma}=m_2$ and $\|\varphi-\varphi_{*}\|_{H^2} +\|\sigma-\sigma_*\|<\ell_*$.

Now for any given $\epsilon>0$, we take
\begin{equation}
		\omega=\min\left\lbrace 1,\,\epsilon,\,\frac{\lambda_*}{2},\,\ell_* ,\,\frac{\delta_*}{3C_S},\,\frac{L_9M_3}{5L_8}\right\rbrace,\label{t}
		\end{equation}
where
\begin{equation}
M_3=\frac{1}{3} \max\{1,6L_5\alpha\} T_2,\quad L_9=\frac{1}{2}\min\{1,\eta_*\}<1.
\label{e0}
\end{equation}
Next, for any given parameters
\begin{equation}
\eta_1,\eta_2,\eta_3 \in\left(0,\,\frac{\omega}{3}\right],
\label{eta123}
\end{equation}
if the corresponding solution satisfies $\|\varphi(t)-\varphi_{*}\|_{H^2(\Omega)} +\|\sigma(t)-\sigma_{*}\|\leq \omega $ for all $t\in [0,T_1]$, we set $\widehat{T}=T_1$. Otherwise,
we define
\begin{equation}
\widehat{T} =\inf\big\{t\in (0,T_1]\ \big|\ \|\varphi(t)-\varphi_{*}\|_{H^2(\Omega)} +\|\sigma(t)-\sigma_{*}\|>\omega\big\}.
\label{T-hat}
\end{equation}
By continuity of the solution, we have $\widehat{T}>0$.

Recalling \eqref{aac}, we see that the growth of higher-order norms of the local strong solution are controlled on the time interval $[0,T_2]$. Next, we proceed to show that the solution can stay in a suitable small neighborhood of $(\bm{0},\varphi_{*},\sigma_{*})$ (characterized by the constant $\omega$) on the time interval $[0,T_2]$ as well, if we choose $\bm{v}_0$ is sufficiently small and $(\varphi_0,\sigma_0)$ is a sufficiently small perturbation of the local minimizer $(\varphi_{*},\sigma_{*})$ (characterized by the parameters $\eta_1, \eta_2, \eta_3$).  This property will enable us to derive some refined estimates for the local strong solution on $[0,T_2]$, which are crucial in the construction of a global strong solution.
\begin{lemma} \label{claim}
There exists at least one triple of parameters $(\eta_1,\eta_2,\eta_3)$ satisfying \eqref{eta123} such that $\widehat{T} > T_2$.
\end{lemma}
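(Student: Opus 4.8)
The plan is to combine the dissipative structure of the system \eqref{f3.c}--\eqref{f2.b} with a continuation argument, fixing the smallness parameters $\eta_1,\eta_2,\eta_3$ only at the very end. Start from an arbitrary admissible datum $(\bm v_0,\varphi_0,\sigma_0)$ obeying \eqref{l} and \eqref{hh}, let $(\bm v,\varphi,\mu,\sigma,p)$ be the corresponding local strong solution on $[0,T_1]$, and set $d(t):=\|\varphi(t)-\varphi_*\|_{H^2}+\|\sigma(t)-\sigma_*\|$. If $\widehat T>T_2$ there is nothing to prove; so assume $\widehat T\le T_2$, which by \eqref{T2} forces $\widehat T\le T_2<T_1$. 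Then, by continuity of $d$, the bound $d(0)\le\eta_2+\eta_3<\omega$ (see \eqref{hh}, \eqref{eta123}) and the definition \eqref{T-hat}, one has $d(t)\le\omega$ on $[0,\widehat T]$ and $d(\widehat T)=\omega$; moreover $[0,\widehat T]\subseteq[0,T_2]$, so the estimates of Section \ref{loc-re} are available on $[0,\widehat T]$. I will reach a contradiction by proving $\sup_{[0,\widehat T]}d(t)\le\rho(\eta_1,\eta_2,\eta_3)$, where $\rho$ depends only on $M_0$, $(\varphi_*,\sigma_*)$, the coefficients of the system and $\Omega$, and $\rho\to0$ as $(\eta_1,\eta_2,\eta_3)\to0$; choosing the parameters in $(0,\omega/3]$ so small that $\rho<\omega$ then contradicts $d(\widehat T)=\omega$ and forces $\widehat T>T_2$.

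The first step is to show that the total energy stays essentially at the minimizer's level throughout $[0,\widehat T]$. Integrating the dissipation inequality \eqref{BEL5} gives $\tfrac12\int_0^{\widehat T}\mathcal D(\tau)\,\mathrm d\tau\le\mathcal E(0)-\mathcal E(\widehat T)+L_2|\overline{\varphi_0}-c_0|$. For the initial energy, an estimate as in \eqref{jj} (using (H2) and the separation \eqref{tt}) gives $\mathcal E(0)=\tfrac12\|\bm v_0\|^2+\mathcal F(\varphi_0,\sigma_0)\le\mathcal F(\varphi_*,\sigma_*)+C(\eta_1^2+\eta_2+\eta_3)$, while $|\overline{\varphi_0}-c_0|=|\overline{\varphi_0-\varphi_*}|\le C\eta_2$ since $\overline{\varphi_*}=c_0$. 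For the lower bound on $\mathcal E(\widehat T)$ I invoke the local minimizing property of $(\varphi_*,\sigma_*)$ in $\mathcal Z_{c_0,m_2}$; because $\overline{\varphi(t)}=c_0+e^{-\alpha t}(\overline{\varphi_0}-c_0)$ need not equal $c_0$ when $\alpha>0$, I pass to the shifted function $\widetilde\varphi:=\varphi(\widehat T)-(\overline{\varphi(\widehat T)}-c_0)$, which has mean $c_0$, satisfies $\|\widetilde\varphi\|_{L^\infty}\le1$ by \eqref{r} and the smallness of $\eta_2$, and obeys $\|\widetilde\varphi-\varphi_*\|_{H^1}+\|\sigma(\widehat T)-\sigma_*\|\le d(\widehat T)+C\eta_2\le\lambda_*$ thanks to the choice of $\omega$ in \eqref{t}. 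Hence $(\widetilde\varphi,\sigma(\widehat T))\in\mathcal Z_{c_0,m_2}$ lies in the $\lambda_*$-neighbourhood of $(\varphi_*,\sigma_*)$, so $\mathcal F(\widetilde\varphi,\sigma(\widehat T))\ge\mathcal F(\varphi_*,\sigma_*)$, and a bound of the type \eqref{diffE} for the variation of $\mathcal F$ under a shift of the mean yields $\mathcal F(\varphi(\widehat T),\sigma(\widehat T))\ge\mathcal F(\widetilde\varphi,\sigma(\widehat T))-C|\overline{\varphi(\widehat T)}-c_0|\ge\mathcal F(\varphi_*,\sigma_*)-C\eta_2$; therefore $\mathcal E(\widehat T)\ge\mathcal F(\varphi_*,\sigma_*)-C\eta_2$. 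Combining these estimates gives
\[
\int_0^{\widehat T}\mathcal D(\tau)\,\mathrm d\tau\le C\big(\eta_1^2+\eta_2+\eta_3\big).
\]

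The second step converts this into smallness of $d$. Testing \eqref{f1.a} and \eqref{f2.b} against the $(H^1)'$-pairing and using the lower-order bound \eqref{q}, the higher-order bound \eqref{4211a}, the mass relation $\overline{\partial_t\varphi}=-\alpha(\overline{\varphi}-c_0)$ and \eqref{aver-sigb}, one bounds $\int_0^{\widehat T}\big(\|\partial_t\varphi\|_{(H^1)'}^2+\|\partial_t\sigma\|_{(H^1)'}^2\big)\,\mathrm d\tau$ by a quantity that tends to $0$ with $(\eta_1,\eta_2,\eta_3)$; integrating in time, using $\|\varphi_0-\varphi_*\|_{(H^1)'}+\|\sigma_0-\sigma_*\|_{(H^1)'}\le C(\eta_2+\eta_3)$ and $\widehat T\le T_2$, this yields $\sup_{[0,\widehat T]}\big(\|\varphi(t)-\varphi_*\|_{(H^1)'}+\|\sigma(t)-\sigma_*\|_{(H^1)'}\big)=:\rho_0\to0$. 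Finally I interpolate against the higher-order bounds \eqref{421a}, \eqref{4211a}, \eqref{421} of Section \ref{loc-re} (valid on $[0,T_2]\supseteq[0,\widehat T]$) and the regularity $\varphi_*\in H^3(\Omega)\cap H^2_N(\Omega)$, $\sigma_*\in H^2_N(\Omega)$: since $\overline{\sigma(t)}=m_2=\overline{\sigma_*}$, the inequality \eqref{I} gives $\|\sigma(t)-\sigma_*\|\le C\|\sigma(t)-\sigma_*\|_{(H^1)'}^{1/2}$, while $\|\varphi(t)-\varphi_*\|_{H^2}\le C\|\varphi(t)-\varphi_*\|_{(H^1)'}^{1/4}\|\varphi(t)-\varphi_*\|_{H^3}^{3/4}$ together with \eqref{421} gives $d(t)\le C\rho_0^{1/4}+C\rho_0^{1/2}=:\rho$ on $[0,\widehat T]$, with $\rho\to0$ as $(\eta_1,\eta_2,\eta_3)\to0$. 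Picking $\eta_1,\eta_2,\eta_3\in(0,\omega/3]$ small enough that $\rho<\omega$ contradicts $d(\widehat T)=\omega$ and proves the lemma.

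I expect the main difficulty to be the lower bound in Step~1 in the case $\alpha>0$: because the mass $\overline{\varphi(t)}$ is not conserved along the flow, the trajectory leaves $\mathcal Z_{c_0,m_2}$ and the local minimizing inequality \eqref{mini1} cannot be applied directly; the remedy is the shift $\widetilde\varphi$ combined with the Lipschitz-type control \eqref{diffE} of $\mathcal F$ under a change of mean, which is exactly why $|\overline{\varphi_0}-c_0|$ (hence $\eta_2$) must be taken small. A subsidiary but crucial point --- already flagged in the remark after \eqref{w} --- is that none of the constants above depends on $\eta_1,\eta_2,\eta_3$ (they involve only $M_0$, the minimizer, the coefficients and $\Omega$), so that the various right-hand sides genuinely vanish and the parameters can legitimately be fixed at the end.
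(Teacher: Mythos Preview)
Your argument is correct, and it takes a genuinely different route from the paper's own proof. The paper establishes Lemma \ref{claim} by invoking the extended {\L}ojasiewicz--Simon inequality \eqref{lojasb}: it differentiates $(\widehat{\mathcal E}(t)-\mathcal F(\varphi_*,\sigma_*))^{\kappa_*}$ and combines the resulting rate with the energy identity to obtain an $L^1(0,\widehat T)$ bound on $\|\partial_t\varphi\|_{(H^1)'}+\|\partial_t\sigma\|_{V_0'}$ that is controlled by $(\widehat{\mathcal E}(0)-\mathcal F(\varphi_*,\sigma_*))^{\kappa_*}$, then interpolates as you do. Your Step~1, by contrast, dispenses with {\L}ojasiewicz--Simon entirely: you use the local minimality of $(\varphi_*,\sigma_*)$ directly (after the mass shift $\widetilde\varphi$ and a \eqref{diffE}-type correction) to pin $\mathcal E(\widehat T)$ from below by $\mathcal F(\varphi_*,\sigma_*)-C\eta_2$, which immediately bounds the total dissipation $\int_0^{\widehat T}\mathcal D$ by the initial energy excess $C(\eta_1^2+\eta_2+\eta_3)$; this yields an $L^2(0,\widehat T)$ bound on the time derivatives, which you turn into $L^1$ via Cauchy--Schwarz since $\widehat T\le T_2\le 1$. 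Your approach is more elementary for this particular lemma and gives a cleaner algebraic dependence on the $\eta_i$ (powers $1/8$ and $1/4$ instead of $\kappa_*/4$ and $\kappa_*/2$), at the cost of relying on the finiteness of $T_2$; the paper's approach is more closely aligned with the later use of {\L}ojasiewicz--Simon in Section~\ref{lbg} and would still work on arbitrarily long intervals. A minor point: you need $\varphi_*\in H^3(\Omega)$ to interpolate against $\|\varphi(t)-\varphi_*\|_{H^3}$ --- this is indeed available via Corollary~\ref{sta-str} and elliptic regularity once the strict separation \eqref{ps1s} is known, though Proposition~\ref{prop-le2} itself states only $H^2_N$; the paper sidesteps this by interpolating against $\|\varphi(t)-\varphi_0\|_{H^3}$ instead (cf.\ \eqref{digg}).
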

\begin{proof}
We complete the proof by a contradiction argument. Assuming that for all $(\eta_1,\eta_2,\eta_3)$ satisfying  \eqref{eta123}, it always holds $\widehat{T} \in (0, T_2]$.

The energy inequality \eqref{BEL5} implies that problem \eqref{f3.c}--\eqref{ini0} has a strict Lyapunov functional. Let $\widetilde{\varphi}(t)= \varphi(t)-\overline{\varphi}(t)+c_0$. From \eqref{diffE} and \eqref{r}, we have
\begin{align}
\mathcal{E}(t) &= \frac12\|\bm{v}(t)\|^2+ \mathcal{F}(\widetilde{\varphi}(t),\sigma(t))   + \int_{\Omega} \big( \varPsi(\varphi(t))-\varPsi(\widetilde{\varphi}(t))
-\chi\sigma(t)(\varphi(t)-\widetilde{\varphi}(t)) \big) \mathrm{d}x\notag\\
&\geq \frac12\|\bm{v}(t)\|^2+ \mathcal{F}(\widetilde{\varphi}(t),\sigma(t)) -|\Omega|\max_{r\in[-1+\frac{\delta_*}{3}, 1-\frac{\delta_*}{3}]}|\varPsi'(r)| |\overline{\varphi}(t)-c_0| \notag\\ &\quad -|\chi||\Omega||m_2||\overline{\varphi}(t)-c_0|
\notag\\
&= \frac12\|\bm{v}(t)\|^2+ \mathcal{F}(\widetilde{\varphi}(t),\sigma(t)) -L_{10}e^{-\alpha t}|\overline{\varphi_0}-c_0|,\quad \forall\, t\in [0,T_1],
\label{diffE2}
\end{align}
where $L_{10}>0$ depends on $K_2$, $\chi$, $c_0$, $m_2$ and $\Omega$. On the other hand, applying Korn's inequality and (H1), we can rewrite \eqref{BEL5} as follows
\begin{align}
&-\frac{\mathrm{d}}{\mathrm{d}t} \left( \mathcal{E}(t)+(2  L_2+L_{10}) e^{-\alpha t}|\varphi_0-c_0|\right)\notag\\
&\quad\ge\frac{\nu_*}{2}\|\nabla \boldsymbol{v}(t)\|^{2}+\frac{1}{2}\|\nabla \mu(t)\|^{2}+\frac{1}{2}\|\nabla(\sigma(t)-\chi\varphi(t))\|^2 +(L_2+L_{10}) \alpha e^{-\alpha t}|\overline{\varphi_0}-c_0| \notag\\
&\quad\ge L_9\big(\|\nabla \boldsymbol{v}(t)\|^{2}+\|\nabla \mu(t)\|^{2}+\|\nabla(\sigma(t)-\chi\varphi(t))\|^2\big) +L_{11}\alpha e^{-\alpha t}|\overline{\varphi_0}-c_0|,
\label{v1}
\end{align}
for almost all $t\in (0,T_1)$ with $L_{11}= L_2+L_{10}$.

Recalling that $(\varphi_*, \sigma_*)$ is a local minimizer and observing that
\begin{align}
& \|\widetilde{\varphi}(t)-\varphi_*\|_{H^2}+ \|\sigma(t)-\sigma_*\| \notag\\
&\quad \leq \|\varphi(t)-\varphi_*\|_{H^2}+ |\Omega|^\frac12e^{-\alpha t}|\overline{\varphi_0}-c_0|+ \|\sigma(t)-\sigma_*\| \notag\\
&\quad \leq \frac{4}{3}\omega<\lambda_*,\quad \forall\, t\in [0,\widehat{T}],
\notag
\end{align}
if there exists some $\widetilde{t}_0\in[0,\widehat{T}]$ satisfying $$\mathcal{E}(\widetilde{t}_0)+(2  L_2+L_{10}) e^{-\alpha \widetilde{t}_0}|\overline{\varphi_0}-c_0|=\mathcal{F}(\varphi_*,\sigma_*),$$ then it follows from \eqref{diffE2}--\eqref{v1} that
$\mathcal{E}(t)+(2  L_2+L_{10}) e^{-\alpha t}|\overline{\varphi_0}-c_0|=\mathcal{F}(\varphi_*,\sigma_*)$ for all $t\geq \widetilde{t}_0$. As a consequence, $\|\nabla\bm{v}(t)\|
=\|\nabla\mu(t)\|=\|\nabla(\sigma(t)-\chi\varphi(t))\|
=|\overline{\varphi}(t)-c_0|=0$  for $t\ge \widetilde{t}_0$. This implies that  $(\bm{v}(t),\varphi(t),\sigma(t))$ becomes independent of time for all $t\ge \widetilde{t}_0$ (cf. \eqref{e3} below). Hence, we have obtained a (unique) global strong solution and Theorem \ref{3main} is proved.

Below we assume that
$$
\widehat{\mathcal{E}}(t):=\mathcal{E}(t)+(2  L_2+L_{10})  e^{-\alpha t}|\overline{\varphi_0}-c_0| >\mathcal{F}(\varphi_*,\sigma_*),\quad \forall  \, t\in [0,\widehat{T}].
$$
Using the Poincar\'{e}--Wirtinger inequality \eqref{poincare}, for the solution $(\varphi, \sigma)$ we have
\begin{align}
&\|-\Delta \varphi(t)+\varPsi'(\varphi(t))
- \chi(\sigma(t)-\overline{\sigma}(t))
+{\beta}\mathcal{N}(\varphi(t)-\overline{\varphi}(t))-\overline{ \varPsi'(\varphi(t))}\| \notag\\
&\qquad  +  \|\sigma(t)-\chi\varphi(t)- \overline{\sigma(t) -\chi\varphi(t)}\| \notag\\
&\quad\le C\big(\|\nabla \mu(t)\|
+ \|\nabla (\sigma(t)-\chi\varphi(t))\|\big),\quad \forall\, t\in [0,\widehat{T}].\label{e2}
\end{align}
In view of \eqref{t} and \eqref{T-hat}, we can apply the
{\L}ojasiewicz--Simon inequality \eqref{lojasb} to the solution $(\varphi(t), \sigma(t))$ on $[0,\widehat{T}]$ and deduce from \eqref{v1}, \eqref{e2} and Poincar\'{e}'s inequality that
\begin{align}
&-\frac{\mathrm{d}}{\mathrm{d} t}\left(\widehat{\mathcal{E}}(t)
 -\mathcal{F}(\varphi_*,\sigma_*) \right)^{\kappa_*}\notag\\
&\quad=-\kappa_* \left(\widehat{\mathcal{E}}(t) -\mathcal{F}(\varphi_*,\sigma_*) \right)^{\kappa_*-1} \frac{\mathrm{d}}{\mathrm{d}t}   \widehat{\mathcal{E}}(t) \notag\\
& \quad\ge \frac{\kappa_*\left[L_9\big(\|\nabla \boldsymbol{v}(t)\|^{2}+\|\nabla \mu(t)\|^{2}+\|\nabla(\sigma(t)-\chi\varphi(t))\|^2\big) +L_{11} \alpha e^{-\alpha t}|\overline{\varphi_0}-c_0|\right]} {\left[\displaystyle{\frac{1}{2}}\|\bm{v}(t)\|^2 +\mathcal{F}(\varphi(t),\sigma(t)) + (2  L_2+L_{10}) e^{-\alpha t}|\overline{\varphi_0}-c_0| - \mathcal{F}(\varphi_*,\sigma_*)\right]^{1-\kappa_*}} \notag\\
& \quad\ge  \frac{ C\left(\|\nabla \boldsymbol{v}(t)\|^{2}+\|\nabla \mu(t)\|^{2}+\|\nabla(\sigma(t)-\chi\varphi(t))\|^2 +\alpha e^{-\alpha t}|\overline{\varphi_0}-c_0|\right)}{\|\boldsymbol{v}(t)\|^{2(1-\kappa_*)} +\|\nabla \mu(t)\|+\|\nabla(\sigma(t)-\chi\varphi(t))\|
+\big( e^{-\alpha t}|\overline{\varphi_0}-c_0|\big)^{1-\kappa_*} } \notag\\
&\quad \ge L_{12}\min\{1,\alpha\} \left(\|\nabla \boldsymbol{v}(t)\|+\|\nabla \mu(t)\|+\|\nabla(\sigma(t)-\chi\varphi(t))\|+\sqrt{ e^{-\alpha t}|\overline{\varphi_0}-c_0|} \right)
\label{s}
\end{align}
for almost all $t\in (0,\widehat{T})$, where the positive constant $L_{12}$ depends on $\kappa_*$, $C_*$, $L_2$, $L_4$, $L_9$, $L_{10}$, $\Omega$ and coefficients of the system. By comparison and H\"{o}lder's inequality, we obtain the following estimate on time derivatives:
\begin{align}
& \|\partial_t \varphi- \overline{\partial_t \varphi}\|_{V_0'} + \|\partial_{t} \sigma\|_{V_0'} \notag\\
& \quad \leq \|\varphi\bm{v} \| +\|\nabla  \mu\| + \|(\sigma-\overline{\sigma})\bm{v}\| +\|\nabla(\sigma-\chi\varphi)\|\notag\\
&\quad \leq \|\varphi\|_{L^3} \|\bm{v} \|_{\bm{L}^6}+\|\nabla  \mu\| +\|\sigma-\overline{\sigma}\|_{L^3} \|\bm{v}\|_{\bm{L}^6} +\|\nabla(\sigma-\chi\varphi)\|\notag\\
&\quad
\leq C\|\varphi\|_{H^1}\|\nabla \bm{v}\| +\|\nabla \mu\|+ C \big(\|\nabla(\sigma-\chi\varphi)\|+\|\chi\nabla\varphi\|\big)\|\nabla \bm{v}\|  +\|\nabla(\sigma-\chi\varphi)\|\notag\\
&\quad \le L_{13}\big(\|\nabla\bm{v}\|^2 +\|\nabla(\sigma-\chi\varphi)\|^2\big) +L_{14}\big(\|\nabla\bm{v}\|+\|\nabla \mu\| +\|\nabla(\sigma-\chi\varphi)\|\big),\label{e3}
\end{align}
where $L_{13}>0$ depends only on  $\Omega$ and $L_{14}\geq 1$ depends on the coefficients of the system, the bound $L_4$ for lower-order norms and $\Omega$.

Hence, we infer from \eqref{conver1b}, \eqref{aver-phib}, \eqref{v1}, \eqref{s} and \eqref{e3} that
\begin{align}
&-\frac{\mathrm{d}}{\mathrm{d} t}\left(\widehat{\mathcal{E}}(t)
-\mathcal{F}(\varphi_*,\sigma_*) \right)^{\kappa_*} -  \frac{\mathrm{d}}{\mathrm{d}t} \left( \widehat{\mathcal{E}}(t)
 -\mathcal{F}(\varphi_*,\sigma_*)\right)\notag\\
&\quad \geq C\left(\|\partial_t \varphi- \overline{\partial_t \varphi}\|_{V_0'} + \|\partial_{t} \sigma\|_{V_0'}
+ |\overline{\partial_t \varphi}|\right)\notag\\
&\quad \geq L_{15} \big(\|\partial_t \varphi\|_{(H^1)'}+ \|\partial_{t} \sigma\|_{V_0'}\big),
\label{s1}
\end{align}
where $L_{15}>0$ depends on $L_2$, $L_4$, $L_{9}$, $L_{10}$, $L_{12}$, $L_{13}$, $L_{14}$, $\alpha$ and $\Omega$. In particular, $L_{15}$ does not depend on $M_0$.
By a similar argument for \eqref{w} and using \eqref{s1}, we obtain
\begin{align}
&\int_{0}^{\widehat{T}}\|\partial_t \varphi(t)\|_{(H^1)'}+\left\|\partial_{t} \sigma(t)\right\|_{V_0^{\prime}}\, \mathrm{d}t\notag\\
&\quad\le \frac{1}{L_{15}}\big(\mathcal{E}(0)
+ (2  L_2 +L_{10})  |\overline{\varphi_0}-c_0|-\mathcal{F}(\varphi_*,\sigma_*) \big)^{\kappa_*} \notag\\
&\qquad +
\frac{1}{L_{15}} \big( \mathcal{E}(0)
+ (2  L_2 + L_{10}) |\overline{\varphi_0}-c_0|-\mathcal{F}(\varphi_*,\sigma_*)\big)
\notag\\
&\quad\le C\big(\|\bm{v}_0\|^2
+   \|\varphi_0-\varphi_{*}\|_{H^1} +\|\sigma_0-\sigma_{*}\| \big)^{\kappa_*} \notag\\
&\quad\le L_{16}\big(\|\bm{v}_0\|^{2\kappa_*}
+\|\varphi_0-\varphi_{*}\|_{H^1}^{\kappa_*} +\|\sigma_0-\sigma_{*}\|^{\kappa_*}\big),\notag
\end{align}
where $L_{16}>0$ depends on $L_2$, $L_{10}$, $L_{15}$, $\|\varphi_*\|_{H^1}$, $\|\sigma_*\|$, $K_2$, $\delta_*$, the coefficients of the system and $\Omega$. As a consequence, using the interpolation inequality and \eqref{421}, we get
\begin{align}
&\|\varphi(\widehat{T})-\varphi_{*}\|_{H^2}\notag\\
&\quad \le\|\varphi_0-\varphi_{*}\|_{H^2}
+\|\varphi(\widehat{T})-\varphi_0\|_{H^2}\notag\\
&\quad\le\|\varphi_0-\varphi_{*}\|_{H^2}
+C\|\varphi(\widehat{T})-\varphi_0\|_{H^3}^\frac{3}{4}
\|\varphi(\widehat{T})-\varphi_0\|_{(H^1)'}^\frac{1}{4}\notag\\
&\quad\le\|\varphi_0-\varphi_{*}\|_{H^2}+C(M_0+M_2)^\frac{3}{4} \left(\int_{0}^{\widehat{T}}
\|\partial_t\varphi(t)\|_{(H^1)'}\,\mathrm{d}t\right)^\frac{1}{4}\notag\\
&\quad \le\|\varphi_0-\varphi_{*}\|_{H^2}
 +L_{17}(M_0+M_2)^\frac{3}{4}\left(\|\bm{v}_0\|^{\frac{\kappa_*}{2}}
+\|\varphi_0-\varphi_{*}\|_{H^1}^\frac{\kappa_*}{4} +\|\sigma_0-\sigma_{*}\|^\frac{\kappa_*}{4}\right),
\label{digg}
\end{align}
where the constant $L_{17}>0$ depend on $\Omega$ and $L_{16}$. In a similar manner, we can deduce that
\begin{align}
&\|\sigma(\widehat{T})-\sigma_{*}\|\notag\\
&\quad \le\|\sigma_0-\sigma_{*}\|
+L_{18}(M_0+M_2)^\frac{1}{2}\left(\|\bm{v}_0\|^{\kappa_*}
+\|\varphi_0-\varphi_{*}\|_{H^1}^\frac{\kappa_*}{2} +\|\sigma_0-\sigma_{*}\|^\frac{\kappa_*}{2}\right),
\label{digg1}
\end{align}
where the constant $L_{18}>0$ depends on $\Omega$ and $L_{16}$.

Taking the above estimates into account, we take
\begin{align}
&\eta_1=\min\left\lbrace\frac{\omega}{10},\ \left(\frac{\omega}{10L_{17}(M_0+M_2)^\frac{3}{4}}\right)^\frac{2}{\kappa_*},\ \left(\frac{\omega}{10L_{18}(M_0+M_2)^\frac{1}{2}}\right)^\frac{1}{\kappa_*}\right\rbrace, \label{eta1} \\
&\eta_3=\min\left\lbrace\frac{\omega}{10},\ \left(\frac{\omega}{10L_{17}(M_0+M_2)^\frac{3}{4}}\right)^\frac{4}{\kappa_*},\ \left(\frac{\omega}{10L_{18}(M_0+M_2)^\frac{1}{2}}\right)^\frac{2}{\kappa_*}\right\rbrace, \label{eta3} \\
&\eta_2= \min\left\lbrace \eta_3,\ \frac{\sqrt{|\Omega|}\omega}{4(L_2+L_5+L_{10})}\right\rbrace.
\label{eta2}
\end{align}
Then from \eqref{digg} and \eqref{digg1}, it follows that
$$\|\varphi(\widehat{T})-\varphi_{*}\|_{H^2}
\leq \frac{2\omega}{5},\qquad\|\sigma(\widehat{T})-\sigma_{*}\|
\leq\frac{2\omega}{5},
$$
which lead to a contradiction with the definition of $\widehat{T}$. The proof is complete.
\end{proof}

\textbf{Refined estimates on $[0,T_2]$}. Thanks to Lemma \ref{claim}, for any $\epsilon>0$, there exists $(\eta_1,\eta_2,\eta_3)$ sufficiently small (chosen as above) such that
\begin{equation}
\|\varphi(t)-\varphi_{*}\|_{H^2} +\|\sigma(t)-\sigma_{*}\|
\le\omega,\quad \forall\,t\in[0,T_2].
\label{u}
\end{equation}
This observation enables us to derive some refined estimates for the local strong solution on the time interval $[0,T_2]$. From \eqref{m1}, \eqref{hh}, \eqref{t} and \eqref{u}, for any $t\in[0,T_2]$,  we can deduce that
\begin{align}
&\|\varphi(t)\|_{C(\overline{\Omega})} \le \|\varphi_{*}\|_{C(\overline{\Omega})} +C_S\|\varphi(t)-\varphi_{*}\|_{H^2}\le1-\frac{2\delta_*}{3},
\label{xa}
\\
&\|\varphi(t)-\varphi_0\|_{H^2}
\le\|\varphi(t)-\varphi_{*}\|_{H^2}
+\|\varphi_{*}-\varphi_0\|_{H^2} <\frac{L_9M_3}{4L_8},
\label{x}\\
&\|\sigma(t)-\sigma_0\|
\le\|\sigma(t)-\sigma_{*}\|
+\|\sigma_{*}-\sigma_0\|< \frac{L_9M_3}{4L_8}.
\label{x1}
\end{align}
Next, it follows from \eqref{hh}, \eqref{m3-b}, \eqref{w}, \eqref{t}, \eqref{v1}, \eqref{eta1}--\eqref{eta2}  and $\eqref{x}$ that
\begin{align}
\int_{0}^{T_2}\widetilde{\Lambda}(t)\,\mathrm{d}t
&\le \int_{0}^{T_2}\left(\frac34\left\|\nabla \boldsymbol{v}(t)\right\|^{2}+\frac{3a_1}{4}\left\|\nabla \mu(t)\right\|^{2}+\frac34\left\| \nabla(\sigma(t)-\chi\varphi(t))\right\|^{2} \right) \mathrm{d}t\notag\\
&\quad + \int_{0}^{T_2} 3L_5\alpha e^{-\alpha t}|\overline{\varphi_0}-c_0|  \mathrm{d}t\notag\\
&\leq\int_{0}^{T_2}\left(\|\nabla \boldsymbol{v}(t)\|^{2} +\|\nabla \mu(t)\|^{2} +\|\nabla(\sigma(t)-\chi\varphi(t))\|^2+\frac{L_{11}}{L_9} \alpha e^{-\alpha t}|\overline{\varphi_0}-c_0|\right)\mathrm{d}t \notag\\
&\quad + 3L_5\big(1-e^{-\alpha T_2}\big)|\overline{\varphi_0}-c_0|\notag\\
&\le \frac{1}{L_9}\big(\mathcal{E}(0)+ (2  L_2 +L_{10}) |\overline{\varphi_0}-c_0| - \mathcal{E}(T_2) \big) + 3L_5|\overline{\varphi_0}-c_0|
\notag\\
&\le \frac{1}{L_9}\left(\frac{1}{2}\eta_1^2 +\frac{2L_2+L_{10}}{\sqrt{|\Omega|}}\eta_2+\big|\mathcal{F}(\varphi(0),\sigma(0)) -\mathcal{F}(\varphi(T_2),\sigma(T_2))\big|\right) +\frac{3L_5}{\sqrt{|\Omega|}}\eta_2\notag\\
&< \frac{1}{L_9}\left(\frac12\omega^2 + \frac12 \omega +L_8\big(\|\varphi_0-\varphi(T_2)\|_{H^1} +\|\sigma_0-\sigma(T_2)\|\big)\right)+\omega \notag\\
&< \frac{1}{L_9}\left(\frac15 M_3L_9+L_8\frac{M_3L_9}{2L_8}\right)+\frac15 M_3\notag\\
&< M_3.
\label{mm}
\end{align}

\subsection{Proof of Theorem \ref{3main}}\label{proof2.1}
 We are ready to prove Theorem \ref{3main} by an induction argument (cf. \cite{GGW}). For any given $\epsilon>0$, we take the parameters $M_0$, $\eta_1$, $\eta_2$, $\eta_3$ to be the same as in \eqref{ddM} and  \eqref{eta1}--\eqref{eta2}. \medskip

 \textbf{Step 1.}
  For the unique local strong solution $(\bm{v},\varphi,\mu,\sigma)$ on $[0,\,T_2]$, we deduce from \eqref{mm} that
\begin{equation*}
 \int_{\frac{1}{2}T_2}^{T_2}\widetilde{\Lambda}(t)\,\mathrm{d}t \le\int_{0}^{T_2}\widetilde{\Lambda}(t)\,\mathrm{d}t\le M_3.
 \end{equation*}
 Recalling the definition of $\widetilde{\Lambda}$ (see \eqref{LAMB}) and $M_3$ (see \eqref{e0}), we can find some  $t^*\in\big[T_2/2,\,T_2\big]$ such that
 \begin{equation}
 \widetilde{\Lambda}(t^*)< \max\{1,6L_5\alpha\}=K_4.
 \label{qq}
 \end{equation}
 Then it follows from
  \eqref{421a}, \eqref{4211a} and \eqref{421} that
  \begin{align}
 \|\bm{v}(t^*)\|_{\bm{H}^1}
   \leq  2\sqrt{K_4},\qquad
   \|\sigma(t^*)\|_{H^1}   \le 2\sqrt{K_4}+(1+|\chi|)\sqrt{L_4},
 \label{ddb}
 \end{align}
and
 \begin{align}
 \|\varphi(t^*)\|_{H^3} \leq  L_7\left[\big(2a_1^{-\frac12} +2\big) \sqrt{K_4}
+\big(K_1+2+|\chi|\big)\sqrt{L_4} \right],
 \label{dda}
 \end{align}
 where the constant $L_7\geq 1$ is given in \eqref{o}.
 According to \eqref{ddM}, it holds
 \begin{align}
 \|\bm{v}(t^*)\|_{\bm{H}^1}\le M_0,\qquad\|\varphi(t^*)\|_{H^3}\le M_0,\qquad\|\sigma(t^*)\|_{H^1}\le M_0.
 \label{ddc}
 \end{align}
 Thus $(\bm{v}(t^*),\varphi(t^*),\sigma(t^*))$ satisfies the same upper bounds for higher-order norms of the initial datum $(\bm{v}_0,\varphi_0,\sigma_0)$ (cf. \eqref{l}). Besides, thanks to \eqref{BEL5}  and \eqref{xa}, we recover the estimate for the ``initial energy'' at $t^*$ and the separation property (cf. \eqref{tt})
 \begin{align}
 &\mathcal{E}(t^*)+L_2 e^{-\alpha t^*}|\overline{\varphi_0}-c_0|\le \mathcal{E}(0)+L_2|\overline{\varphi_0}-c_0|,\label{ddd} \\
 & \|\varphi(t^*)\|_{C(\overline{\Omega})} \le1-\frac{2\delta_*}{3}.
 \label{dde}
 \end{align}

\textbf{Step 2.} Taking $(\bm{v}(t^*),\,\varphi(t^*),\sigma(t^*))$ as the new initial datum, with the bounds \eqref{ddc}--\eqref{dde}, we are able to obtain a local strong solution to problem \eqref{f3.c}--\eqref{ini0} on $[t^*,t^*+T_2]$ as in Section \ref{opt}. By the uniqueness (cf. Theorem \ref{ls}), we have thus proved the existence of a unique local strong solution to problem \eqref{f3.c}--\eqref{ini0} on the extended time interval $[0,t^*+T_2]$, which satisfies the estimates \eqref{q}, \eqref{aac} and  \eqref{421a}--\eqref{421} on $[0,t^*+T_2]$. Moreover, on the subinterval $[0,3T_2/2]\subset [0,t^*+T_2]$, we can derive \eqref{u} and then obtain the refined estimates \eqref{xa}--\eqref{x1} as well as
\be
\int_{0}^{\frac{3}{2}T_2}\widetilde{\Lambda}(t)\,\mathrm{d}t\le M_3.
\label{intbd}
\ee
 By \eqref{intbd} and applying the  argument like  in Step 1, we find some $t^{**}\in \big[T_2,\, 3T_2/2\big]$ such that $(\bm{v}(t^{**}),\,\varphi(t^{**}),\sigma(t^{**}))$
satisfies the same estimates as \eqref{ddc}--\eqref{dde}.

Hence, we can repeat the above procedure and obtain the local strong solution on $[t^{**},t^{**}+T_2]$ subject to the initial data   $(\bm{v}(t^{**}),\,\varphi(t^{**}),\sigma(t^{**}))$. This yields a unique local strong solution on $[0,2T_2]$ with the same bounds as before. By induction, we can finally construct a unique global strong solution  that satisfies the uniform-in-time estimates \eqref{q}, \eqref{aac},  \eqref{421a}--\eqref{421} and  \eqref{u}--\eqref{x1} on $[0,+\infty)$. Moreover, it holds
\be
\int_{0}^{+\infty}\widetilde{\Lambda}(t)\,\mathrm{d}t\le M_3.\label{inf}
\ee
The proof of Theorem \ref{3main} is complete.
$\hfill\square$

 \section{Long-time Behavior}\label{lbg}
 \setcounter{equation}{0}

In this section, we study long-time behavior of the global strong solution to problem \eqref{f3.c}--\eqref{ini0}.
Again we shall focus on the proof for the case $\alpha>0$, while the case $\alpha=0$ is an easy consequence with minor modification.

\subsection{Convergence to equilibrium}
Let $(\bm{v}, \varphi, \mu, \sigma)$ be the unique global strong solution obtained in Theorem \ref{3main}. Then we define its $\omega$-limit set by
\begin{align}
\omega(\bm{v},\varphi, \sigma)
&=\big\{(\bm{z}_1,z_2,z_3)  \in \bm{L}^2_{0,\mathrm{div}}(\Omega) \times H_N^2(\Omega) \times L^2(\Omega):\quad \exists\,\left\{t_{n}\right\} \nearrow +\infty\notag\\
& \quad \text { s.t. } \left(\bm{v}(t_{n}),\varphi\left(t_{n}\right),\sigma \left(t_{n}\right)\right) \rightarrow (\bm{z}_1,z_2,z_3)\text{ strongly in }  \bm{L}^2(\Omega)\times H^{2}(\Omega)\times L^{2}(\Omega) \big\}. \notag
\end{align}
Since \eqref{aver-phib} holds on $[0,+\infty)$, we find that
\begin{align}
\lim_{t\to +\infty}\overline{\varphi}(t)=c_0,\label{con-avep}
\end{align}
exponentially fast. Next, it has been shown in Section \ref{proof2.1} that the quantity $\widetilde{\Lambda}(t)$ is uniformly bounded and integrable on the infinite interval $[0,+\infty)$ (cf. \eqref{aac}, \eqref{inf}). Hence, we can infer from the inequality \eqref{aab} that
$$
\lim_{t\to +\infty}\widetilde{\Lambda}(t)=0.
$$
This together with \eqref{m3b-b} and \eqref{LAMB} further implies
\begin{align}
\lim_{t\to +\infty}\big(\|\nabla\bm{v}(t)\|+\|\nabla\mu(t)\| +\|\nabla(\sigma(t)-\chi\varphi(t))\|\big)=0.
\label{424}
\end{align}
By Poincar\'{e}'s inequality, we easily get
\begin{equation}
\lim_{t\to +\infty}\|\bm{v}(t)\|_{\bm{H}^1}=0. \label{con-v}
\end{equation}
Similarly, from the Poincar\'{e}--Wirtinger inequality, \eqref{con-avep}, \eqref{424} and \eqref{aver-sigb}, we  obtain
\begin{align}
\lim_{t\to +\infty} \|(\sigma(t)-\chi\varphi(t))-(\overline{\sigma_0}-\chi c_0)\|=0. \label{con-aves}
\end{align}

Taking advantage of the dissipative energy inequality \eqref{BEL5}, the continuous dependence estimate \eqref{uniA1} (in lower-order norms), the uniform-in-time higher-order estimates \eqref{421a}--\eqref{421} on $[0,+\infty)$ and \eqref{424}, we can follow a standard argument (see for instance, \cite{A2007,GGW,JWZ,ZWH}) to show that the $\omega$-limit set $\omega(\bm{v},\varphi,\sigma)$ is a non-empty, compact and connected subset of $\bm{L}^2_{0,\mathrm{div}}(\Omega) \times H_N^2(\Omega) \times L^2(\Omega)$, which is given by
$$
\omega(\bm{v},\varphi,\sigma)\subset
\big\{(\bm{0}, \varphi_\mathrm{s}, \sigma_\mathrm{s})\,:\,
(\varphi_\mathrm{s}, \sigma_\mathrm{s})\in \big(H^3(\Omega)\cap H_N^2(\Omega)\big) \times H_N^2(\Omega)\ \text{satisfying}\ \eqref{s5bchv}-\eqref{s5dchv}
\big\}
$$
with $m_1=c_0$ and $m_2=\overline{\sigma_0}$.
For the Lyapunov functional $\widehat{\mathcal{E}}(t)=\mathcal{E}(t)+(2  L_2+L_{10})  e^{-\alpha t}|\overline{\varphi_0}-c_0|$ (recall \eqref{low-bd1b}, \eqref{v1}), we have
$$
\lim_{t\to+\infty} \widehat{\mathcal{E}}(t)= E_\infty,
$$
for some $E_\infty \in \mathbb{R}$. Thus, for any
$(\bm{0},\varphi_{\mathrm{s}}, \sigma_\mathrm{s})\in \omega(\bm{v},\varphi,\sigma)$, it holds $\mathcal{F}( \varphi_{\mathrm{s}}, \sigma_\mathrm{s} )=E_\infty$. Moreover, it follows from \eqref{glo-sep} that
\begin{align}
|\varphi_{\mathrm{s}}(x)|\leq 1-\delta,\quad \forall\, x\in \overline{\Omega}.
\label{sta-sep1}
\end{align}
The above observation yields the existence of some  $(\bm{0},\varphi_{\infty}, \sigma_\infty)\in \omega(\bm{v},\varphi,\sigma)$ and an unbounded increasing sequence $\{t_{n}\}$ such that
\begin{equation}
\lim_{t_n\to +\infty}\|\varphi(t_n)-\varphi_{\infty}\|_{H^2}=0,\quad \lim_{t_n\to +\infty}\|\sigma(t_n)-\sigma_\infty\|=0.
\label{con-seq}
\end{equation}
Combining the above result with \eqref{con-aves}, we obtain $\sigma_\infty-\chi\varphi_\infty=\overline{\sigma_0}-\chi c_0$, i.e., a constant.

Analysis similar to that in the proof of Lemma \ref{claim} implies, if there exists some $\widetilde{t}_1>0$ such that $\widehat{\mathcal{E}}(\widetilde{t}_1)= E_\infty$, then the global strong solution simply becomes a stationary one after $\widetilde{t}_1$. This leads to a trivial case and the conclusion of Theorem \ref{3main1} follows (including the convergence rate).

Therefore, in the remaining part of the proof, we assume that
$$\widehat{\mathcal{E}}(t)> E_\infty,\quad \forall\, t\geq 0.
$$
 Thanks to the assumption (H3) and the strict separation properties \eqref{glo-sep}, \eqref{sta-sep1}, for the global strong solution $\varphi(t)$ the potential $\varPsi$ is an analytic function confined on $(-1+\delta/2,1-\delta/2)$ along time evolution. This enables us to improve the sequential convergence \eqref{con-seq} by applying the {\L}ojasiewicz--Simon approach and show that $\omega(\bm{v},\varphi,\sigma)=\{(\bm{0},\varphi_\infty, \sigma_\infty)\}$. For results on some related systems, we refer the reader to \cite{A2009,A2007,GG2010,JWZ,RH99,ZWH}.

For every $(\bm{0},\varphi_{\mathrm{s}}, \sigma_\mathrm{s})\in \omega(\bm{v},\varphi,\sigma)$, we can apply Lemma \ref{LS} and conclude that there exist   $\kappa_\mathrm{s}\in(0,1/2)$, $\ell_\mathrm{s}\in (0,1)$ and $C_\mathrm{s}\geq 1$ such that \eqref{lojas} holds
for any
$$
(\varphi,\sigma)\in \mathcal{B}_{\ell_\mathrm{s}}(\varphi_\mathrm{s},\sigma_\mathrm{s})  =\big\{ (\varphi,\sigma)\in H^2_N(\Omega)\times L^2(\Omega)\ \big|\ \overline{\sigma}=\overline{\sigma_0},\ \|\varphi-\varphi_\infty\|_{H^2} +\|\sigma-\sigma_\infty\|<\ell_\mathrm{s}\big\}.
$$
Following the argument in \cite{RH99} and thanks to the compactness of $\omega(\bm{v},\varphi,\sigma)$, we can find a finite number of elements
$\big(\bm{0},\varphi_{\mathrm{s}}^{(i)}, \sigma_\mathrm{s}^{(i)}\big)\in \omega(\bm{v},\varphi,\sigma)$, $i=1,...,n$, such that
$$
\omega(\bm{v},\varphi,\sigma)\subset \{\bm{0}\}\times \bigcup_{i=1}^n \mathcal{B}_{\ell^{(i)}_\mathrm{s}}\big(\varphi^{(i)}_\mathrm{s}, \sigma^{(i)}_\mathrm{s}\big).
$$
Here, we denote by $\ell^{(i)}_\mathrm{s}$, $\kappa^{(i)}_\mathrm{s}$ and $C^{(i)}_\mathrm{s}$ the  constants in Lemma \ref{LS} corresponding to $(\varphi^{(i)}_\mathrm{s}, \sigma^{(i)}_\mathrm{s}\big)$.
From the definition of $\omega(\bm{v},\varphi,\sigma)$, there exists $\widetilde{t}_2>0$ sufficiently large such that
$$
(\varphi(t), \sigma(t))\in \bigcup_{i=1}^n \mathcal{B}_{\ell^{(i)}_\mathrm{s}}\big(\varphi^{(i)}_\mathrm{s}, \sigma^{(i)}_\mathrm{s}\big),\quad \forall\, t\geq \widetilde{t}_2.
$$
Thus, taking $\ell_\infty=\min_{i}\ell^{(i)}_\mathrm{s}$, $\kappa_\infty= \min_{i}\kappa^{(i)}_\mathrm{s}$ and $C_\infty=\max_i C^{(i)}_\mathrm{s}$, we have
\begin{align}
&|\mathcal{F}(\varphi(t),\sigma(t)) -E_\infty|^{1-\kappa_\infty}\notag\\
&\quad  \leq C_\infty  \|-\Delta \varphi(t)+ \varPsi'(\varphi(t))
- \chi(\sigma(t)-\overline{\sigma_0})
+{\beta}\mathcal{N}(\varphi(t)-\overline{\varphi}(t))-\overline{ \varPsi'(\varphi(t))}\|\notag\\
&\qquad +C_\infty  \|\sigma(t)-\chi\varphi(t)- \overline{\sigma(t) -\chi\varphi(t)}\| +C_\infty |\overline{\varphi(t)}-c_0|^{1-\kappa_\infty},\quad \forall\, t\geq  \widetilde{t}_2.
\label{lojasinf}
\end{align}
Proceeding analogously to the proof of \eqref{e2} and \eqref{s}, we can apply \eqref{lojasinf} to derive the following inequality for all $t> \widetilde{t}_2$:
\begin{align}
&-\frac{\mathrm{d}}{\mathrm{d} t}\left(\widehat{\mathcal{E}}(t)
 -E_\infty \right)^{\kappa_\infty}\notag\\
& \quad \ge C\min\{1,\alpha\} \left(\|\nabla \boldsymbol{v}(t)\|+\|\nabla \mu(t)\|+\|\nabla(\sigma(t)-\chi\varphi(t))\|+\sqrt{ e^{-\alpha t}|\overline{\varphi_0}-c_0|} \right).
\label{s1a}
\end{align}
On the other hand, using the higher-order estimates \eqref{421a}--\eqref{421} on $[0,+\infty)$, we can slightly modify \eqref{e3} to conclude that
\begin{align}
& \|\partial_t \varphi- \overline{\partial_t \varphi}\|_{V_0'} + \|\partial_{t} \sigma\|_{V_0'}   \leq  C\big(\|\nabla\bm{v}\|+\|\nabla \mu\| +\|\nabla(\sigma-\chi\varphi)\|\big).\label{e3a}
\end{align}
Hence, integrating \eqref{s1a} on $[\widetilde{t}_2,+\infty)$ and using \eqref{conver1b}, \eqref{aver-phib}, we obtain
\begin{equation*}
\int_{\widetilde{t}_2}^{+\infty} \big(\|\partial_t \varphi(t)\|_{(H^1)'}+\|\partial_t \sigma(t)\|_{V_0'}\big) \mathrm{d}t<+\infty,
\end{equation*}
which together with \eqref{con-seq} implies the convergence
\begin{equation}
\lim_{t\to +\infty}\|\varphi(t)-\varphi_\infty\|_{H^2}=0, \quad\lim_{t\to +\infty}\|\sigma(t)-\sigma_\infty\|=0.\label{s3}
\end{equation}
Recalling \eqref{con-v}, we can assert that $\omega(\bm{v}_0,\varphi_0,\sigma_0) =\big\{(\bm{0},\varphi_\infty,\sigma_\infty)\big\}$.

To draw the conclusion \eqref{convv}, we first observe from \eqref{424}, \eqref{con-aves} and \eqref{s3} that
\begin{align}
\|\sigma(t)-\sigma_\infty\|_{H^1}
& \leq C\|\nabla (\sigma(t)-\sigma_\infty)\| \notag\\
& \leq C\|\nabla (\sigma(t)-\chi\varphi(t))\| + C|\chi|\|\nabla(\varphi(t)-\varphi_\infty)\| +
C\|\nabla (\sigma_\infty-\chi\varphi_\infty)\|\notag\\
&\to 0, \quad \text{as}\ \ t\to+\infty.
\label{s4}
\end{align}
Besides, from equations \eqref{f4.d}, \eqref{5bchv}, the separation properties for $\varphi$, $\varphi_\infty$, and the convergence results \eqref{424}, \eqref{s3}, \eqref{s4}, we can conclude
\begin{align}
&\|\nabla\Delta(\varphi(t)-\varphi_{\infty})\| \notag \\
&\quad\le
\|\nabla \mu(t)\|+\|\nabla(\varPsi'(\varphi(t))
-\varPsi'(\varphi_{\infty}))\| +|\chi|\|\nabla(\sigma(t)-\sigma_{\infty})\| \notag\\
&\qquad +|\beta|\|\nabla \mathcal{N}\big[\varphi(t)-\overline{\varphi}(t) -(\varphi_{\infty}-\overline{\varphi_{\infty}})\big]\|\notag \\
&\quad \le\|\nabla \mu(t)\|+C\|\varphi(t)-\varphi_{\infty}\|_{H^1} +|\chi|\|\sigma(t)-\sigma_\infty\|_{H^1}\to 0,\quad \text{as}\ \ t\to +\infty.\label{s5}
\end{align}
Combining \eqref{424}, \eqref{s5} with the elliptic estimate, we finally get
\begin{equation}
\lim_{t\to +\infty}\|\varphi(t)-\varphi_\infty\|_{H^3}=0. \label{s6}
\end{equation}
In view of \eqref{con-v}, \eqref{s4} and \eqref{s6}, we have proved the first part of Theorem \ref{3main1}.

\subsection{Convergence rate}
We proceed to estimate the convergence rate.

The lower-order estimate can be obtained by the method in \cite{HJ2001}. Recalling \eqref{s3}, we infer that there exists $\widetilde{t}_3>0$ sufficiently large such that $(\varphi(t), \sigma(t))$ satisfies \eqref{lojasinf} with
$E_\infty=\mathcal{F}(\varphi_\infty,\sigma_\infty)$
 for all $t\geq \widetilde{t}_3$. This fact combined with \eqref{con-avep}, \eqref{con-v} and \eqref{s1a} yields that
\begin{align}
&-\frac{\mathrm{d}}{\mathrm{d} t}\left(\widehat{\mathcal{E}}(t)
 -\mathcal{F}(\varphi_\infty,\sigma_\infty) \right)^{\kappa_\infty}\notag\\
& \quad \ge C\min\{1,\alpha\} \left(\|\nabla \boldsymbol{v}(t)\|+\|\nabla \mu(t)\|+\|\nabla(\sigma(t)-\chi\varphi(t))\|+\big( e^{-\alpha t}|\overline{\varphi_0}-c_0|\big)^{1-\kappa_\infty} \right)\notag\\
&\quad \geq C\big(\|\bm{v}(t)\|^{2(1-\kappa_\infty)} +\big( e^{-\alpha t}|\overline{\varphi_0}-c_0|\big)^{1-\kappa_\infty} +|\mathcal{F}(\varphi(t),\sigma(t)) -\mathcal{F}(\varphi_\infty,\sigma_\infty)|^{1-\kappa_\infty}\big)\notag\\
&\quad \geq  C\left(\widehat{\mathcal{E}}(t)
 -\mathcal{F}(\varphi_\infty,\sigma_\infty) \right)^{1-\kappa_\infty}.
\label{s1b}
\end{align}
As a consequence, we obtain
\be
\frac{\mathrm{d}}{\mathrm{d} t}\left(\widehat{\mathcal{E}}(t)
 -\mathcal{F}(\varphi_\infty,\sigma_\infty) \right) +C \left(\widehat{\mathcal{E}}(t)
 -\mathcal{F}(\varphi_\infty,\sigma_\infty) \right)^{2-2\kappa_\infty}\le 0,\quad \forall\, t>\widetilde{t}_3.
 \label{e-decay}
\ee
This gives an estimate on the decay of the energy
\be
 0<\widehat{\mathcal{E}}(t)
 -\mathcal{F}(\varphi_\infty,\sigma_\infty)   \leq C(1+t)^{-\frac{1}{1-2 \kappa_\infty}},\quad \forall\, t\geq \widetilde{t}_3.
 \label{m7}
\ee
Integrating \eqref{s1b} from $t\geq \widetilde{t}_3$ to $+\infty$ and using \eqref{con-avep}, \eqref{e3a}, \eqref{m7}, we obtain
\be
\int_{t}^{+\infty}
\big(\|\nabla \boldsymbol{v}(t)\|+
\left\|\partial_{t} \varphi(t)\right\|_{\left(H^{1}\right)^{\prime}}+\left\|\partial_{t} \sigma(t)\right\|_{V_0^{\prime}}\big) \mathrm{d} t \leq C(1+t)^{-\frac{\kappa_\infty}{1-2 \kappa_\infty}}, \label{pv}
\ee
which yields that
\be
\left\|\varphi(t)-\varphi_\infty\right\|_{(H^{1})'}
+ \left\|\sigma(t)-\sigma_\infty\right\|_{V_0'} \leq C(1+t)^{-\frac{\kappa_\infty}{1-2 \kappa_\infty}},\quad \forall\, t\geq 0.
\label{ra}
\ee

Next, we derive higher-order estimate by an energy method (see e.g., \cite{ZWH}). Multiplying \eqref{f3.c} with $\bm{v}$ and integrating over $\Omega$, we get
\begin{align}
\frac{1}{2}\frac{\mathrm{d}}{\mathrm{d}t} \|\bm{v}\|^2
+ 2\int_\Omega \nu(\varphi)|D\bm{v}|^2\,\mathrm{d}x =\mathcal{W}_1(t),
\label{w1}
\end{align}
where
\begin{align}
\mathcal{W}_1(t)&=\big(\mu(t)\nabla \varphi(t),\bm{v}(t)\big) +\chi\big( \sigma(t)\nabla \varphi(t),\bm{v}(t)\big).
\label{w12}
\end{align}
Next, we observe that the difference  $(\varphi-\varphi_\infty,\sigma-\sigma_\infty)$ satisfies the following system
\begin{subequations}
	\begin{alignat}{3}
	&\partial_t (\varphi-\varphi_\infty)+\bm{v} \cdot \nabla \varphi=\Delta \mu-\alpha(\overline{\varphi}-c_0),\label{3f1.a} \\
	&\mu-\mu_\infty=-\Delta (\varphi-\varphi_\infty)
+\varPsi'(\varphi) -\varPsi'(\varphi_\infty)  -\chi (\sigma-\sigma_\infty)\notag\\
	& \qquad\qquad  \ \ +\beta\mathcal{N}(\varphi-\overline{\varphi}-\varphi_\infty+c_0),\label{3f4.d} \\
	&\partial_t (\sigma-\sigma_\infty)+\bm{v} \cdot \nabla \sigma= \Delta \big((\sigma-\sigma_\infty)-\chi(\varphi-\varphi_\infty)\big), \label{3f2.b}
	\end{alignat}
\end{subequations}
where
$\mu_\infty= \overline{\varPsi'(\varphi_\infty)}-\chi\overline{\sigma_0}$ is a constant. Then multiplying \eqref{3f1.a} with $\mu-\mu_\infty$, integrating over $\Omega$ and using \eqref{conver1}, we obtain
\begin{align}
& \frac{\mathrm{d}}{\mathrm{d} t}\left [\frac{1}{2}\left\|\nabla\left(\varphi-\varphi_{\infty}\right)\right\|^{2} +\frac{\beta}{2}\left\|\nabla\mathcal{N}\left(\varphi -\overline{\varphi}-\varphi_\infty+c_0\right)\right\|^{2} +\int_{\Omega}\big( \varPsi(\varphi) - \varPsi'\left(\varphi_{\infty}\right)\varphi\big)  \mathrm{d} x\right]\notag\\
&\quad+\|\nabla \mu\|^{2}=\mathcal{W}_2(t),\label{w2}
\end{align}
where
\begin{align}
\mathcal{W}_2(t) &=\big(\partial_t\varphi(t),\chi (\sigma(t)-\sigma_\infty)\big)
-\big(\bm{v}(t) \cdot \nabla \varphi(t),\mu(t)\big)\notag\\
&\quad  -\int_\Omega \alpha(\overline{\varphi}(t)-c_0)(\mu(t)-\mu_\infty)\, \mathrm{d}x.\label{w22}
\end{align}
On the other hand, multiplying \eqref{3f1.a} by $\mathcal{N} (\varphi-\overline{\varphi}-\varphi_\infty+c_0)$ and integrating over $\Omega$, we get
\begin{align}
 \frac{1}{2}\frac{\mathrm{d}}{\mathrm{d} t}\left\|\varphi-\overline{\varphi}-\varphi_\infty+c_0\right\|^{2}_{V_0'}
 +\left\|\nabla\left(\varphi-\varphi_{\infty}\right)\right\|^{2} =\mathcal{W}_3(t),
 \label{w3}
\end{align}
where
\begin{align}
\mathcal{W}_3(t)&
=-\big(\bm{v}(t) \cdot \nabla \varphi(t),\mathcal{N} (\varphi(t)-\overline{\varphi}(t)-\varphi_\infty+c_0)\big)\notag\\
&\quad -\big(\varPsi'(\varphi(t))-\varPsi'(\varphi_\infty), \varphi(t)-\overline{\varphi}(t)-\varphi_\infty+c_0\big)\notag\\
&\quad+\chi\big( \sigma(t)-\sigma_\infty,\varphi(t)-\varphi_\infty\big)
-\beta\|\nabla \mathcal{N}(\varphi(t)-\overline{\varphi}(t)-\varphi_\infty+c_0)\|^2.
\label{w32}
\end{align}
Multiplying \eqref{3f2.b} by $(\sigma-\sigma_\infty)-\chi(\varphi-\varphi_\infty)+ \mathcal{N}(\sigma-\sigma_\infty)$ and integrating over $\Omega$, we find that
\begin{align}
&\frac{1}{2}\frac{\mathrm{d}}{\mathrm{d} t}\big(\|\sigma-\sigma_{\infty}\|^{2} +
\|\sigma-\sigma_{\infty}\|^{2}_{V_0'}\big)
+\|\nabla(\sigma-\sigma_\infty)-\chi\nabla(\varphi-\varphi_\infty)\|^{2}
+ \|\sigma-\sigma_{\infty}\|^{2}\notag\\
&\quad =\mathcal{W}_4(t),\label{w4}
\end{align}
where
\begin{align}
\mathcal{W}_4(t)
&=\big(\partial_t\sigma(t),\chi(\varphi(t)-\varphi_\infty)\big) +\big(\bm{v}(t) \cdot \nabla \sigma(t),\chi (\varphi(t)-\varphi_\infty)  \big)\notag\\
&\quad -\big(\bm{v}(t) \cdot \nabla \sigma(t), (\sigma(t)-\sigma_\infty)\big) - \big(\bm{v}(t) \cdot \nabla \sigma(t),\mathcal{N}(\sigma(t)-\sigma_\infty)\big) \notag\\
&\quad +\chi\big(\varphi(t)-\varphi_\infty,\sigma(t)-\sigma_\infty\big)\notag\\ &=\big(\partial_t\sigma(t),\chi(\varphi(t)-\varphi_\infty)\big) +\chi\big(\bm{v}(t) \cdot \nabla \sigma(t), \varphi(t)  \big)\notag\\
&\quad   -\big(\bm{v}(t) \cdot \nabla \sigma(t),\mathcal{N}(\sigma(t)-\sigma_\infty)\big)  +\chi\big(\varphi(t)-\varphi_\infty,\sigma(t)-\sigma_\infty\big).
\label{w42}
\end{align}
 In the derivation of \eqref{w42} we have used the facts that $\nabla \cdot \bm{v}=0$ and $\sigma_\infty-\chi\varphi_\infty$ is a constant.

Combining \eqref{w1}, \eqref{w2}, \eqref{w3} and \eqref{w4} yields that
\begin{align}
\frac{\mathrm{d}}{\mathrm{d}t}\mathcal{Y}(t)
+\mathcal{H}(t)=\mathcal{W}(t),
\label{BELdd}
\end{align}
where
\begin{align}
\mathcal{Y}(t)&=\|\bm{v}\|^2+
\|\nabla(\varphi-\varphi_{\infty})\|^{2}
+(1+\beta)\|\nabla\mathcal{N}(\varphi-\overline{\varphi} -\varphi_\infty+c_0)\|^{2}\notag\\
&\quad
+2\int_{\Omega} \big( \varPsi(\varphi) - \varPsi(\varphi_{\infty})- \varPsi'(\varphi_{\infty})(\varphi-\varphi_{\infty}) \big) \mathrm{d} x\notag\\
&\quad
 + \|\sigma-\sigma_{\infty}\|^{2}
 + \|\sigma-\sigma_{\infty}\|^{2}_{V_0'}
  -2\chi(\sigma-\sigma_\infty,\varphi-\varphi_\infty),
\label{T1}
\end{align}
\begin{align}
\mathcal{H}(t)& =\int_\Omega 4\nu(\varphi(t))|D\bm{v}(t)|^2\, \mathrm{d}x + 2\|\nabla \mu(t)\|^2 +2 \|\nabla(\varphi-\varphi_{\infty})\|^{2}
\notag\\
&\quad
+ 2\|\nabla(\sigma-\sigma_\infty) -\chi\nabla(\varphi-\varphi_\infty)\|^{2}
+ 2\|\sigma-\sigma_{\infty}\|^{2},
\label{S1}
\end{align}
and
\begin{align}
\mathcal{W}(t)&= -2\alpha\int_\Omega (\overline{\varphi}(t)-c_0)(\mu(t)-\mu_\infty)\, \mathrm{d}x
 \notag\\
&\quad -2\big(\bm{v}(t) \cdot \nabla \varphi(t),\mathcal{N} (\varphi(t)-\overline{\varphi}(t)-\varphi_\infty+c_0)\big)\notag\\
&\quad
-2\big(\varPsi'(\varphi(t))-\varPsi'(\varphi_\infty), \varphi(t)-\overline{\varphi}(t)-\varphi_\infty+c_0\big) \notag\\
&\quad +4\chi\big( \sigma(t)-\sigma_\infty,\varphi(t)-\varphi_\infty\big)
-2\beta\|\nabla \mathcal{N}(\varphi(t)-\overline{\varphi}(t)-\varphi_\infty+c_0)\|^2
\notag\\
&\quad -2\big(\bm{v}(t) \cdot \nabla \sigma(t),\mathcal{N}(\sigma(t)-\sigma_\infty)\big).\notag
\end{align}
Thanks to the strict separation properties \eqref{glo-sep}, \eqref{sta-sep1} and the Newton--Leibniz formula, we easily obtain
\begin{align}
&\left|\int_{\Omega} \varPsi(\varphi(t))
- \varPsi(\varphi_{\infty})
- \varPsi'(\varphi_{\infty})(\varphi(t)-\varphi_{\infty})
\, \mathrm{d} x\right|\notag\\
&\quad =\left| \int_\Omega \int_0^1\int_0^1 \varPsi''(sz\varphi(t)+(1-sz)\varphi_\infty) z(\varphi(t)-\varphi_\infty)^2\mathrm{d}s \mathrm{d}z\mathrm{d}x\right|\notag\\
&\quad \le C\max_{r\in[-1+\frac{\delta}{2},1-\frac{\delta}{2}]}|\varPsi''(r)|\|\varphi(t)-\varphi_{\infty}\|^2,\label{var3}
\end{align}
and in a similar manner,
\begin{align}
   \left|\int_\Omega \mu(t)-\mu_{\infty}\,\mathrm{d}x\right|
& \leq  \left|\int_\Omega \varPsi'(\varphi(t))-\varPsi'(\varphi_\infty)\,\mathrm{d}x\right| +C\|\varphi(t)-\varphi_{\infty}\|+C\|\sigma(t)-\sigma_\infty\|\notag\\
&\leq C\|\varphi(t)-\varphi_{\infty}\|+C\|\sigma(t)-\sigma_\infty\|.
\label{mu}
\end{align}
Hence, a direct computation yields that
\begin{align}
\mathcal{W}(t)
&  \le \frac{\eta_*}{2}\|\nabla\bm{v}\|^2+ C|\overline{\varphi}(t)-c_0|^2+  C\|\varphi(t)-\varphi_{\infty}\|^2 +C\|\sigma(t)-\sigma_\infty\|^2.
\label{es-W}
\end{align}
Besides, we have
\begin{align}
\|\varphi(t)-\varphi_{\infty} \|^{2} & \leq  2\|\varphi(t)-\overline{\varphi}(t)-\varphi_{\infty} +c_0\|^2+2 \|\overline{\varphi}(t)-c_0\|^2 \notag\\
	&\leq C \|\varphi(t)-\overline{\varphi}(t)-\varphi_{\infty} +c_0\|_{(H^{1})'}\|\nabla(\varphi(t)-\varphi_{\infty})\| +C|\overline{\varphi}(t)-c_0|^2
\notag\\
	& \leq a_3\|\nabla(\varphi(t)-\varphi_{\infty})\|^{2} +C \|\varphi(t)-\overline{\varphi}(t)-\varphi_{\infty} +c_0\|_{V_0'}^2 +C|\overline{\varphi}(t)-c_0|^2\notag\\
& \leq   a_3\|\nabla(\varphi(t)-\varphi_{\infty})\|^{2} +C \|\varphi(t)-\varphi_{\infty} \|_{(H^1)'}^2,
\label{var0}
\end{align}
and
\begin{align}
\|\sigma(t)-\sigma_{\infty} \|^{2} &\leq \|\nabla(\sigma(t)-\sigma_{\infty})\| \|\sigma(t)-\sigma_{\infty}'\|_{V_0'}\notag\\
&\le a_3\|\nabla(\sigma(t)-\sigma_\infty )-\chi\nabla(\varphi(t)-\varphi_\infty )\|^{2} +a_3\chi^2\|\nabla(\varphi(t)-\varphi_{\infty})\|^2\notag\\
&\quad+C\|\sigma(t)-\sigma_{\infty}\|_{V_0'}^2,
\label{var1}
\end{align}
where $a_3>0$ is an arbitrary constant.
Then from \eqref{T1}--\eqref{var1}, we see that
\begin{align}
\mathcal{W}(t)
&  \le \frac{\eta_*}{2}\|\nabla\bm{v}\|^2+ \|\nabla(\varphi(t)-\varphi_{\infty})\|^{2}+ \|\nabla(\sigma(t)-\sigma_\infty )-\chi\nabla(\varphi(t)-\varphi_\infty )\|^{2} \notag\\
&\quad
 + C \|\varphi(t)-\varphi_{\infty} \|_{(H^1)'}^2 + C\|\sigma(t)-\sigma_\infty\|_{V_0'}^2,
\label{es-W2}
\end{align}
\begin{align}
\mathcal{Y}(t)
&\le 2\big(\|\bm{v}(t)\|^2+\|\nabla(\varphi(t)-\varphi_{\infty})\|^{2}+ \|\nabla(\sigma(t)-\sigma_\infty )-\chi\nabla(\varphi(t)-\varphi_\infty )\|^{2}\big)\notag\\
&\quad  + C \|\varphi(t)-\varphi_{\infty} \|_{(H^1)'}^2 + C\|\sigma(t)-\sigma_\infty\|_{V_0'}^2,
\label{t1}
\end{align}
and
\begin{align}
\mathcal{Y}(t)&\ge
 \frac12\big(\|\bm{v}(t)\|^2+  \|\nabla(\varphi(t)-\varphi_{\infty})\|^{2} + \|\sigma(t)-\sigma_{\infty}\|^{2}\big)
-C \|\varphi(t)-\varphi_{\infty}\|^{2}_{(H^1)'}.
\label{t2}
\end{align}

Combining the estimates \eqref{es-W2}, \eqref{t2}, we can find some small constant $a_4>0$ so that the inequality \eqref{BELdd} can be reduced to
\begin{align}
&\frac{\mathrm{d}}{\mathrm{d}t}\mathcal{Y}(t) + a_4 \mathcal{Y}(t)\le C \|\varphi(t)-\varphi_{\infty} \|_{(H^1)'}^2 + C\|\sigma(t)-\sigma_\infty\|_{V_0'}^2,\quad\forall\, t\in [0,+\infty).
\label{BEL7}
\end{align}
The above inequality together with  \eqref{ra} easily yields
\begin{align}
\mathcal{Y}(t)\leq C(1+t)^{-\frac{2\kappa_\infty}{1-2 \kappa_\infty}},\quad\forall\, t\in [0,+\infty).
\label{Ydecay}
\end{align}
Hence, from \eqref{t2}, \eqref{Ydecay} and \eqref{ra} we can conclude
\be
\|\bm{v}(t)\|+ \|\varphi(t)-\varphi_\infty \|_{H^1} + \|\sigma(t)-\sigma_\infty \|\leq C(1+t)^{-\frac{\kappa_\infty}{1-2 \kappa_\infty}},\quad\forall\, t\in [0,+\infty). \label{ra1}
\ee
The proof of Theorem \ref{3main1} is complete.
$\hfill\square$

\begin{remark}
Usually we only have $\kappa_\infty\in (0,1/2)$ (recall Remark \ref{rem:kappa}). If $\kappa_\infty=1/2$, then we can improve \eqref{ra1} to be an exponential decay (cf. \eqref{e-decay}).
\end{remark}

\section*{Declarations}
\noindent \textbf{Acknowledgments}
The authors are grateful to the anonymous referee for insightful comments and valuable suggestions that improved the manuscript.
H. Wu is a member of the Key Laboratory of Mathematics for Nonlinear Sciences (Fudan University), Ministry of Education of China. \medskip
\\
\noindent \textbf{Funding}
This work was partially supported by NNSFC 12071084 and the Shanghai Center for Mathematical Sciences.  \medskip
\\
\noindent \textbf{Conflict of interest}
The authors declare that they have no conflict of interests.  \medskip
\\
\noindent \textbf{Data availability statement}
Data sharing not applicable to this article as no datasets were generated or analysed during the current study.


\end{document}